\numberwithin{equation}{section}
\newtheorem{theorem}{Theorem}[section]
\newtheorem{lemma}[theorem]{Lemma}
\newtheorem{corollary}[theorem]{Corollary}
\newtheorem{conjecture}[theorem]{Conjecture}
\theoremstyle{definition}
\newtheorem{example}[theorem]{Example}
\newtheorem{question}[theorem]{Question}
\theoremstyle{remark}
\newtheorem{remark}[theorem]{Remark}
\newcommand{\CC}{\mathbf{C}}
\newcommand{\ZZ}{\mathbf{Z}}
\newcommand{\NN}{\mathbb{N}}
\newcommand{\OO}{\mathcal{O}}
\newcommand{\hh}{\mathfrak{h}}
\newcommand{\la}{\langle}
\newcommand{\ra}{\rangle}
\newcommand{\ttt}{\mathfrak{t}}
\newcommand{\Triv}{\mathrm{Triv}}
\newcommand{\ct}{\mathrm{ct}}
\newcommand{\hd}{\mathrm{hd}}
\newcommand{\emp}{\emptyset}
\newcommand{\lra}{\longrightarrow}
\newcommand{\Del}{\Delta}
\DeclareMathOperator{\Irr}{Irr}
\DeclareMathOperator{\Ext}{Ext}
\DeclareMathOperator{\Hom}{Hom}
\DeclareMathOperator{\Rad}{Rad}
\DeclareMathOperator{\KZ}{KZ}
\DeclareMathOperator{\rad}{rad}
\DeclareMathOperator{\im}{Im}
\DeclareMathOperator{\End}{End}
\begin{document}

\title[Character formulas and BGG resolutions]{Character formulas and Bernstein-Gelfand-Gelfand resolutions for Cherednik algebra modules}

\begin{abstract} We study blocks of category $\OO$ for the Cherednik algebra having the property that every irreducible module in the block admits a BGG resolution, and as a consequence prove a character formula conjectured by Oblomkov-Yun. \end{abstract}

\author{Stephen Griffeth}
\address{Stephen Griffeth \\
Instituto de Matem\'atica y F\'isica \\
Universidad de Talca}
\email{}

\author{Emily Norton}
\address{Emily Norton \\
Department of Mathematics \\
Kansas State University}

\maketitle

\section{Introduction} A fundamental problem in representation theory is the calculation of characters of irreducible objects. For category $\OO$ of the rational Cherednik algebra the complete solution is equivalent to calculating the decomposition matrix $[\Delta(\lambda):L(\mu)]$. For the cyclotomic rational Cherednik algebras, the entries of the decomposition matrix are values of affine parabolic Kazhdan-Lusztig polynomials \cite{RSVV}, \cite{Lo}, \cite{We}; this amounts, in principle, to a recursive algorithm for calculating characters. Special classes of modules exist, however, for which we can write closed character formulas. Our work in the present paper was motivated by the desire to understand the reasons for the existence of such formulas in a particular class of examples, those appearing in Conjecture 9.10.1 of Oblomkov-Yun \cite{ObYu}:
 $$\sum_{n>0} \mathrm{dim}_\CC(L_{1/2n}(D_{2n}))x^{n-1}=(1-4x)^{-3/2} $$ and $$\sum_{n>0} \mathrm{dim}_\CC(L_{1/2n}(C_{2n}))x^n=(1-4x)^{-3/2}(1+\sqrt{1-4x})^2/4.$$ In these formulas, $L_c(W)$ denotes the $H_c(W)$ module $L_c(\Triv)$. This conjecture is a consequence of the following graded dimension formula (see Corollary \ref{dim formula}):
\begin{equation} \label{graded dim}
\mathrm{dim}_\CC(L^d)=\dim_\CC(L^{N-d})=\sum_{\substack{0 \leq \ell \leq d \\ d-\ell \in 2 \ZZ}}  {2n \choose \ell}
\end{equation}
if $0 \leq d \leq n$, $N=2n$, and $W=W(C_{2n})$; or if $0 \leq d \leq n-1$, $N=2n-2$, and $W=W(D_{2n})$; and in which $L^d$ denotes the degree $d$ part of $L=L_{1/2n}(\Triv)$.
 
In most branches of Lie theory, those irreducible modules for which character formulas as simple as this exist tend to admit \emph{BGG resolutions}, that is, resolutions by standard (also known as Verma) modules. Not only do the modules involved in \eqref{graded dim} admit BGG resolutions, but so do \emph{all} the simples in the same block. 
Our first theorem characterizes this situation and gives a practical method for proving that BGG resolutions exist (see Theorem \ref{tight 2} for a more general result which would also apply to the module category of a cellular algebra):
\begin{theorem} \label{tight 1}
Let $C$ be a highest weight category over a field $F$, with simple objects indexed by a finite poset $\Lambda$. The following conditions are equivalent:
\begin{itemize}
\item[(a)] Every simple object in $C$ admits a resolution by standard objects.
\item[(b)] The radical of every standard object in $C$ is generated by homomorphic images of standard objects.
\item[(c)] $[\Delta(\lambda):L(\mu)]=\dim\Hom(\Delta(\mu),\Delta(\lambda))$ for all $\lambda,\mu\in\Lambda$. 
\end{itemize}
\end{theorem}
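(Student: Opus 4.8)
The plan is to run the cycle $(a)\Rightarrow(b)$, $(c)\Rightarrow(b)$, $(b)\Rightarrow(a)$, and then to extract $(c)$ from a minimal standard resolution (so that $(b)\Rightarrow(c)$ comes out of $(b)\Rightarrow(a)\Rightarrow(c)$). Throughout I use the usual homological apparatus of a highest weight category over $F$: Bernstein--Gelfand--Gelfand reciprocity $(P(\lambda):\Delta(\mu))=[\Delta(\mu):L(\lambda)]$ and $\dim\Hom(P(\mu),M)=[M:L(\mu)]$; the fact that $R(\lambda):=\ker(P(\lambda)\onto\Delta(\lambda))$ carries a standard filtration with sections $\Delta(\nu)$, $\nu>\lambda$; the orthogonality $\Ext^\bullet(\Delta(\lambda),\nabla(\mu))=\delta_{\lambda\mu}\delta_{\bullet,0}F$; the vanishing $\Ext^1(\Delta(\mu),M)=0$ whenever every composition factor $L(\nu)$ of $M$ satisfies $\nu\le\mu$ (because $\Delta(\mu)$ is relatively projective in the Serre truncation to the order ideal $\{\nu:\nu\le\mu\}$); finiteness of the global dimension of $C$; and the Krull--Schmidt property for finite length objects. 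Write $\pi_\mu\colon P(\mu)\onto\Delta(\mu)$ for the canonical surjection. For $(a)\Rightarrow(b)$: replace a standard resolution of $L(\lambda)$ by a minimal one, in which no indecomposable summand of any differential is an isomorphism. If $\cdots\to C_1\xrightarrow{d_0}C_0\onto L(\lambda)$ is such, then $C_0=\Delta(\lambda)$, since a surjection from a direct sum of standards onto an object with simple head $L(\lambda)$ must use $\Delta(\lambda)$ (as $\Hom(\Delta(\mu),L(\lambda))\neq0$ forces $\mu=\lambda$) and minimality excludes extra summands; minimality then gives $\im d_0=\rad\Delta(\lambda)$, so $\rad\Delta(\lambda)$ is a homomorphic image of the direct sum of standards $C_1$, which is $(b)$.

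For $(c)\Rightarrow(b)$: precomposition with $\pi_\mu$ embeds $\Hom(\Delta(\mu),\Delta(\lambda))$ into $\Hom(P(\mu),\Delta(\lambda))$, whose dimension is $[\Delta(\lambda):L(\mu)]$; hence $(c)$ is equivalent to the assertion that every homomorphism $P(\mu)\to\Delta(\lambda)$ factors through $\pi_\mu$ (equivalently, annihilates $R(\mu)$; equivalently, has image a homomorphic image of $\Delta(\mu)$), and, applied to each indecomposable summand, that every homomorphism from a projective to a finite direct sum of standards factors through the corresponding direct sum of standard quotients. Now take the minimal projective presentation $P^1\xrightarrow{d}P(\lambda)\onto L(\lambda)$, so $\im d=\rad P(\lambda)$; composing with $\pi_\lambda$ gives $P^1\to\Delta(\lambda)$ with image $\pi_\lambda(\rad P(\lambda))=\rad\Delta(\lambda)$, and by the factoring property this map factors through a direct sum of standards. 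Thus $\rad\Delta(\lambda)$ is a homomorphic image of a direct sum of standards, i.e. $(b)$.

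The substantive step is $(b)\Rightarrow(a)$, and the place I expect the real obstacle to lie. One argues by induction on the finite poset $\Lambda$, assuming that for every $\nu<\lambda$ the simple $L(\nu)$ has a finite standard resolution using only $\Delta(\rho)$ with $\rho\le\nu$. Beginning from $\Delta(\lambda)\onto L(\lambda)$, whose kernel $\rad\Delta(\lambda)$ has all composition factors strictly below $\lambda$, condition $(b)$ presents $\rad\Delta(\lambda)$ as a homomorphic image of a finite direct sum $C_1$ of standards $\Delta(\mu)$ with $\mu<\lambda$; it then suffices to standardly resolve any object $M$ whose composition factors all lie strictly below $\lambda$, by standards indexed below $\lambda$, and to concatenate. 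For such $M$ one inducts on length: peel off a simple subquotient $L(\nu)$ ($\nu<\lambda$), obtaining a short exact sequence whose outer terms are shorter and each admit finite standard resolutions (one from the outer induction, one from the inner), and assemble a standard resolution of $M$ from the two by a horseshoe-type construction performed within the exact category of standardly filtered objects, using the vanishing of $\Ext^1(\Delta(\mu),-)$ on the relevant truncation, the standard filtrations of the $R(\mu)$, and $(b)$ applied to the radicals $\rad\Delta(\mu)$, $\mu<\lambda$, into which the higher syzygies embed. The delicate point — exactly what must be handled carefully, and what Theorem~\ref{tight 2} isolates so as also to apply to module categories of cellular algebras — is that this yields genuine \emph{standard} (not merely $\Delta$-filtered) resolutions, i.e. that the syzygies remain generated by homomorphic images of standards; this is where $(b)$ and the relative homological algebra of $\mathcal F(\Delta)$ are essential, and is the hardest part of the proof.

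Finally $(a)\Rightarrow(c)$: take a minimal standard resolution $C_\bullet\to L(\lambda)$ and apply $\Hom(-,\nabla(\mu))$. By the orthogonality the $i$-th term of $\Hom(C_\bullet,\nabla(\mu))$ has dimension equal to the multiplicity of $\Delta(\mu)$ in $C_i$, and its differentials vanish by minimality (a map $C_i\to\nabla(\mu)$ has image in the simple socle of $\nabla(\mu)$, hence kills $\rad C_i\supseteq\im(C_{i+1}\to C_i)$); so that multiplicity is $\dim\Ext^i(L(\lambda),\nabla(\mu))$. Taking Euler characteristics in $K_0(C)$ writes $[L(\lambda)]=\sum_\mu e_{\lambda\mu}[\Delta(\mu)]$ with $e_{\lambda\mu}=\sum_i(-1)^i\dim\Ext^i(L(\lambda),\nabla(\mu))$, and $e$ is inverse to the decomposition matrix $d_{\lambda\mu}=[\Delta(\lambda):L(\mu)]=\dim\Hom(P(\mu),\Delta(\lambda))$. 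Since $C_0=\Delta(\lambda)$ and both $d$ and the matrix $\bigl(\dim\Hom(\Delta(\mu),\Delta(\lambda))\bigr)$ are unitriangular for the partial order, a bookkeeping argument comparing the standard resolution of $L(\lambda)$ with the homomorphisms among standards forces $\dim\Hom(\Delta(\mu),\Delta(\lambda))=[\Delta(\lambda):L(\mu)]$, which is $(c)$, closing the cycle.
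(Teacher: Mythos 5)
Your implications $(a)\Rightarrow(b)$ (trim to a minimal resolution, so $C_0=\Del(\lambda)$ and $\im d_0=\Rad\Del(\lambda)$) and $(c)\Rightarrow(b)$ (via projective covers and BGG reciprocity) are fine; the second is a genuine alternative to the paper, which never invokes projectives. But the proof as a whole has a real gap in the step you yourself flag as "the substantive step": $(b)\Rightarrow(a)$. The horseshoe-on-a-composition-series strategy does not go through, because standard objects have no lifting property with respect to the surjections that arise. Concretely, to splice resolutions of the outer terms of $0\to M'\to M\to L(\nu)\to 0$ into one for $M$ you must lift the map $\Del(\nu)\onto L(\nu)$ along $M\onto L(\nu)$, and such a lift need not exist ($\Ext^1(\Del(\nu),M')$ need not vanish when $M'$ has composition factors above $\nu$); indeed, the existence of such lifts is essentially equivalent to the statement being proved. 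Working "within the exact category of standardly filtered objects" does not help, since the objects being resolved ($L(\nu)$, $\Rad\Del(\lambda)$, the syzygies) are not $\Del$-filtered. What is missing is the paper's key mechanism: (i) an object $M$ satisfies $\dim\Hom(\Del(\lambda),M)=[M:L(\lambda)]$ for all $\lambda$ if and only if \emph{every subobject} of $M$ is generated by images of standards (Lemma \ref{tight iff sing}), and this "tightness" passes to subobjects, quotients and sums (Lemma \ref{inherit tight}); (ii) hypothesis $(b)$, which only concerns the radicals, propagates by induction on the poset $\Lambda$ to \emph{all} subobjects of all standards (Lemma \ref{rad enough}). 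Once that is in hand the resolution is built by brute iteration — each kernel is generated by images of standards, hence is a quotient of a finite sum of them — with no horseshoe and no lifting needed (Lemma \ref{sing iff BGG}).

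The closing step $(a)\Rightarrow(c)$ is also not a proof. The Euler-characteristic computation only yields an identity in $K_0(C)$, whereas $(c)$ is a statement about $\Hom$-spaces; since one always has $\dim\Hom(\Del(\mu),\Del(\lambda))\leq[\Del(\lambda):L(\mu)]$, what must be shown is the reverse inequality, i.e.\ that every occurrence of $L(\mu)$ in a composition series of $\Del(\lambda)$ is accounted for by a map from $\Del(\mu)$. Unitriangularity of the two matrices does not force them to coincide, and "a bookkeeping argument" is not supplied. The paper gets $(a)\Rightarrow(c)$ by observing that a BGG resolution of $L(\lambda)$ forces $\Rad\Del(\lambda)$ to be generated by images of standards, and then running the same propagation (Lemmas \ref{rad enough} and \ref{tight iff sing}) described above. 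So both hard directions of the cycle ultimately require the one idea your proposal does not contain.
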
 
Earlier known examples of blocks of category $\OO_c$ satisfying these conditions were closely related to Haiman's work on diagonal coinvariant rings; see \cite{BEG}, \cite{Gor}, \cite{GoGr}.

The proof we give of \eqref{graded dim} utilizes the tools developed in \cite{Gri}, which completely describe the submodule structure of the standard module $\Delta_c(\Triv)$ when the parameter $c_0$ associated with the conjugacy class of reflections containing the transpositions is not a positive rational number of denominator less than $n$. The most interesting case is when $c_0=\ell/n$ for a positive integer $\ell$ coprime to $n$. In this case, we can give a very complete description of the structure of the standard modules in the principal block of category $\OO_c$ for the rational Cherednik algebra of the group $G(r,1,n)$, and in particular we can compute the graded dimensions of their simple heads. Our next theorem specifies the set of parameters $c$ for which the conditions in Theorem \ref{tight 1} hold for the principal block of category $\OO_c$. We order pairs of integers lexicographically: $(i_1,k_1) > (i_2,k_2)$ if $k_1>k_2$ or $k_1=k_2$ and $i_1>i_2$. We mark the boxes of the trivial partition $(n)$ as follows: for a box $b$, if there exist integers $0 \leq i \leq r-1$ and $k$ so that $$(d_i-i)/r=d_0/r+\mathrm{ct}(b) c_0+k,$$ then we place a label $(i,k)$ in the box $b$ (so a box may receive more than one label, but a given $i$ appears at most once). An example of such a labeling when $n=8$ and $r=6$ is

\vspace{1em}
\ytableausetup{mathmode, boxsize=3.5em}
\begin{ytableau}
\begin{matrix} (0,0) \\ (1,-2) \end{matrix} & (3,-1) & & & & \begin{matrix} (5,3) \\ (2,-2) \end{matrix} &  & 
\end{ytableau}

Using this notation we give the following characterization of when the conditions from Theorem \ref{tight 1} hold for the principal block (see Theorem \ref{tight thm} for the proof):
\begin{theorem} \label{intro main}
Suppose $c_0=\ell/n$ for a positive integer $\ell$ coprime to $n$. Then every simple module in the principal block has a BGG resolution if and only if when read left to right, and assuming the labels appearing in a given box are ordered in decreasing fashion, we have
$$(i_1,k_1)>(i_2,k_2)>\cdots>(i_s,k_s)>(i_1,k_1-\ell).$$
\end{theorem}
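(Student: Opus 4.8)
The plan is to apply Theorem \ref{tight 1} to the principal block $C$ of $\OO_c$, which is itself a highest weight category with poset $\Lambda$ the set of $r$-multipartitions of $n$ lying in the block, and to translate its conditions into the labelling condition. Since $\Delta(\mu)$ is a quotient of its projective cover $P(\mu)$ and $\dim\Hom(P(\mu),\Delta(\lambda))=[\Delta(\lambda):L(\mu)]$, one always has
$$\dim\Hom(\Delta(\mu),\Delta(\lambda))\le[\Delta(\lambda):L(\mu)],$$
so condition (c) of Theorem \ref{tight 1} amounts to the assertion that this is an \emph{equality} for every pair $\lambda,\mu\in\Lambda$. The theorem will follow once we show that equality for all pairs is equivalent to the stated chain condition on the labels.

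The input is the explicit description of the submodule structure of $\Delta_c(\Triv)$ from \cite{Gri}, together with its extension to every standard module in the principal block, which is the technical work of the earlier sections. The point is that the labels $(i,k)$ placed on the boxes of $(n)$ are in bijection with the ``first-layer'' homomorphisms into $\Delta(\Triv)$: a label $(i,k)$ records a singular vector whose weight pins down a standard module $\Delta(\mu(i,k))$ admitting a nonzero map to $\Delta(\Triv)$, and the lexicographic order on labels is exactly the order in which the corresponding submodules are stacked, with larger labels lower down. The hypothesis $\gcd(\ell,n)=1$ ensures that the box contents $0,1,\dots,n-1$ scaled by $c_0=\ell/n$ are in sufficiently general position that the submodule poset of $\Delta(\Triv)$ is controlled cleanly by these labels, while the wraparound inequality $(i_s,k_s)>(i_1,k_1-\ell)$ encodes the $\ell$-periodicity in the $k$-direction imposed by $c_0=\ell/n$.

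For the ``if'' direction I would verify condition (b) of Theorem \ref{tight 1}. Assuming the chain condition, the first-layer submodules of $\Delta(\Triv)$ are totally ordered, and an induction down the block shows that the full submodule lattice of each standard $\Delta(\lambda)$ is a chain whose successive terms are homomorphic images of standards; in particular $\rad\Delta(\lambda)$ is the image of the standard attached to the next label, so (b) holds and every simple in the block admits a BGG resolution. For the ``only if'' direction I would show that failure of the chain condition forces failure of (c). If two successive labels are out of order, the corresponding first-layer submodules of $\Delta(\Triv)$ are incomparable, and their ``overlap'' produces a composition factor $L(\mu)$ of $\Delta(\Triv)$ with $[\Delta(\Triv):L(\mu)]>\dim\Hom(\Delta(\mu),\Delta(\Triv))$; if instead the wraparound inequality fails, the same defect reappears one period deeper, in the standard reached by moving $\ell$ in the $k$-direction. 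Either way (c) fails, so by Theorem \ref{tight 1} not every simple in the block admits a BGG resolution.

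The main obstacle is the propagation step: promoting the complete description of the submodule lattice of $\Delta(\Triv)$ --- which \cite{Gri} furnishes via non-symmetric Jack polynomials and intertwining operators --- to \emph{all} standards in the principal block, tracking how the labels, and hence the singular vectors, reorganize as $\lambda$ varies, and extracting the precise multiplicities $[\Delta(\lambda):L(\mu)]$ needed to detect when they exceed $\dim\Hom(\Delta(\mu),\Delta(\lambda))$. The delicate point is ruling out a ``hidden'' composition factor --- one not visible from $\Delta(\Triv)$ alone --- when the chain condition holds, and conversely locating the extra composition factor produced by a failure of the wraparound inequality, since this is the only place where the affine, rather than finite, nature of the combinatorics enters.
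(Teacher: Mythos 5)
Your overall strategy --- reduce to Theorem \ref{tight 1} and translate its conditions through the explicit submodule structure of standard modules from \cite{Gri} --- is the paper's strategy, and you correctly flag that the real work is propagating the analysis from $\Delta(\Triv)$ to every standard in the block. But the mechanism you propose for the ``if'' direction is wrong. You claim that under the chain condition the first-layer (fundamental) submodules of $\Delta(\Triv)$ are totally ordered and that the submodule lattice of each $\Delta(\lambda)$ is a chain. This fails already in the motivating examples: for $G(r,1,rn)$ at equal parameters $1/rn$ the label condition holds, yet $\Rad\Delta(\Triv)$ is the sum of $r$ pairwise incomparable fundamental submodules (Corollary \ref{rdim}), which is precisely why the BGG resolution of $L(\Triv)$ is an $r$-fold cube rather than a linear complex. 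Incomparability of fundamental submodules is not the obstruction to tightness. The actual criterion (Corollary \ref{principal tight corollary}) is that each fundamental submodule be generated by its lowest degree subspace of singular vectors; concretely, whenever the lowest degree piece of a fundamental submodule $M$ lies inside a submodule of type $M_{b_1,b_2,k}$, then $M$ itself must lie inside it. The label inequalities are exactly what guarantee this, via the dictionary of Section \ref{submodules}: they force $k<k'$ whenever $M_{b,k}$ and $M_{b_1,b_2,k'}$ coexist in the same component, and they force distinct type-(2) submodules to live in distinct components.

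Correspondingly, your ``only if'' direction misidentifies the defect as an ``overlap'' of incomparable submodules. The paper's argument is a counting one: tight multiplicities force a bijection between intersections of fundamental submodules and irreducible constituents of the singular vectors, so distinct intersections must have distinct lowest degree parts. A violated inequality (either between adjacent labels or the wraparound one) lets one write down a specific $\lambda$ in the block --- with $\lambda^i=(n)$ or $\lambda^j=(n)$ for a suitable component, not $\Triv$ itself --- together with two fundamental submodules $M$, $N$ of $\Delta_c(\lambda)$ such that the lowest degree piece of $N$ is contained in $M$ but $N\not\subseteq M$; then $N$ and $N\cap M$ are distinct intersections with the same lowest degree part, killing tightness. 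Your proposal does not supply this witness, and the chain/total-order picture it rests on would not survive contact with the $r\geq 2$ examples, so as written the argument has a genuine gap in both directions.
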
 Thus in the example preceding the theorem there are simples in the principal block not admitting BGG resolutions. On the other hand, it is straightforward to check that for the parameters appearing in \eqref{graded dim}, the condition of Theorem \ref{intro main} is satisfied: for the type $D_{2n}$ version, take $c_0=1/2n$ and $d_0=d_1=0$, in which case the labels are $(0,0)$ in the first box, and $(1,-1)$ in the box with content $n$; for the type $C_{2n}$ version, take $c_0=d_0=-d_1=1/2n$ so that the labels are $(0,0)$ appearing in the first box and $(1,-1)$ appearing in the box with content $n-1$.

When the principal block $B_0$ of $\OO_c$ satisfies the conditions in Theorems \ref{tight 1} and \ref{intro main}, we can conjecturally calculate BGG resolutions of simples using a graph $\Gamma$ depicting the submodule structure of standard modules. The graph $\Gamma$ has a vertex for each $\lambda\in B_0$, and an arrow $\mu\rightarrow \lambda$ for each $\mu$ which is the highest weight of a submodule of $\Del(\lambda)$ of the type defined in Theorem \ref{submodule structure}. These generate the radical of $\Del(\lambda)$ (but not always minimally). We conjecture (\ref{bgg conj}) that the standards in the BGG resolution of $L(\lambda)$ may be found by, first, taking the subgraph $\Gamma_\lambda$ spanned by all $\mu$ such that there is a path from $\mu$ to $\lambda$ in $\Gamma$; and then, deleting all subgraphs $\Gamma_\nu$ whenever a single arrow $\nu\rightarrow\mu$ can be factored as $\nu\rightarrow\rho\rightarrow\mu$ for vertices $\nu,\mu,\rho\in\Gamma_\lambda$. We expect that the vertices $\mu$ which remain after this procedure are those such that $\Del(\mu)$ appears in the minimal BGG resolution of $L(\lambda)$; moreover, $\Delta(\mu)$ occurs with multiplicity $1$ in homological degree $d(\mu,\lambda)$, where $d(\mu,\lambda)$ is the length of the longest chain of directed arrows from $\mu$ to $\lambda$ in $\Gamma$. For $r=2$ and equal parameters $1/2n$, we can write down all maps between standard modules and we verify (\ref{bgg B2n}) that this algorithm is indeed correct. We also have calculated some examples for $G(r,1,rn)$ at equal parameters $1/rn$; the conjectural algorithm produces the correct graded characters for simple modules in the blocks in our examples, see Section \ref{examples}.

For the Cherednik algebra, BGG resolutions that are minimal in a certain sense are minimal resolutions in the sense of commutative algebra; \cite{BGS} used this observation to conjecture a combinatorial formula for the Betti numbers of the ideals of certain subspace arrangements. In this direction, here we develop new tools for establishing the existence of BGG resolutions, thus reducing the calculation of Betti numbers of a number of subspace arrangements to the calculation of character formulas for Cherednik algebra modules. We expect that in our categories of interest satisfying the conditions of Theorem \ref{tight 1}, the character of a simple module should determine the terms in its minimal BGG resolution. For example, we have:
\begin{theorem}\label{intro bgg thm}
Suppose $W=B_{2n}=G(2,1,2n)$ and the parameters $c=1/2n$ are constant.

The module $L_c(\mathrm{triv})=L((2n),\emp)$ is the coordinate ring of the scheme-theoretic intersection of $$\{x_1^2=x_2^2=\cdots=x_{2n}^2 \}$$ and the set of points where at least $n$ coordinates are equal to $0$. For $0\leq i \leq n$, the coefficient of  $v^i$ in the graded character of $L_c(\Triv)$ is the sum $(-1)^i\Del_i$ where 
$$\Del_i=\Del((2n-i,1^i),\emp)\oplus\bigoplus_{1 \leq a \leq i}\Del((n-a,1^{i-a}),(n+a+1-i,1^{a-1}))$$
and the $(2n-i)$'th term is the sum $(-1)^{2n-i}\Del_{2n-i}$ where 
$$\Del_{2n-i}=\Del(\emp,(2n-i,1^i)^t)\oplus\bigoplus_{1 \leq a \leq i}\Del((n+a+1-i,1^{a-1})^t,(n-a,1^{i-a})^t)$$

\end{theorem}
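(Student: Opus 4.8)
The plan is to derive both assertions from the minimal BGG resolution of $L((2n),\emp)$. Existence of such a resolution is immediate from Theorem \ref{intro main}: the parameters $c_0=d_0=-d_1=1/2n$ satisfy the inequality condition there---the only labels being $(0,0)$ in the box of content $0$ and $(1,-1)$ in the box of content $n-1$---so the principal block $B_0$ satisfies the conditions of Theorem \ref{tight 1}. Moreover, for $r=2$ with equal parameters $1/2n$ the algorithm of (\ref{bgg conj}) is known to compute the minimal BGG resolution correctly, this being the content of (\ref{bgg B2n}); so the claimed graded character formula is equivalent to the combinatorial statement that running that algorithm on $B_0$ produces the complex $0\to\Del_{2n}\to\cdots\to\Del_1\to\Del_0=\Del((2n),\emp)\to L((2n),\emp)\to 0$ with $\Del_j$ placed in homological degree $j$, where for $0\leq j\leq n$ the $\Del_j$ are given by the first displayed formula and for $n\leq j\leq 2n$ by the second (with $j=2n-i$).

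I would carry this out in three steps. First, write down the graph $\Gamma$ attached to $B_0$: enumerate the bipartitions in the block via the marking rule and read off the arrows from Theorem \ref{submodule structure}, equivalently from \cite{Gri}; I expect $\Gamma$ to be a ladder-shaped graph whose vertices, apart from a few boundary ones, are exactly $((2n-j,1^j),\emp)$ and $((n-a,1^{j-a}),(n+a+1-j,1^{a-1}))$ for $0\leq j\leq n$, $1\leq a\leq j$, together with their images under twisting by $\det$. Second, reduce to the range $0\leq j\leq n$ by symmetry: the scheme-theoretic intersection in the statement is $0$-dimensional---if at least $n$ coordinates vanish then $x_1^2=\cdots=x_{2n}^2=0$ and all $x_i$ are nilpotent---so $L((2n),\emp)$ is finite dimensional, its $\CC[V]$-free resolution has length $\dim V=2n$, and the contravariant duality of category $\OO_c$ twisted by $\det$ is an autoequivalence fixing $L((2n),\emp)$ up to grading shift (this is also the source of the symmetry $\dim L^d=\dim L^{N-d}$ in \eqref{graded dim}); this identifies $\Del_{2n-j}$ with the $\det$-twist of the $\CC[V]$-dual of $\Del_j$, and tracking the twist on bipartitions---transpose each component and interchange the two components---converts the first formula for $\Del_j$ into the second for $\Del_{2n-j}$, the two formulas agreeing at $j=n$ as an internal check. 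Third, prove the formula for $\Del_j$ with $0\leq j\leq n$ by analyzing the spanning-then-deleting procedure of (\ref{bgg conj}) on $\Gamma$ directly: for each $j$, identify the vertices that survive deletion of all factorable arrows and lie at directed distance $j$ from $(2n),\emp$, check these are exactly the summands of $\Del_j$, and check that the longest directed chain from each such vertex to $(2n),\emp$ has length $j$, so the corresponding standard appears in homological degree $j$ with multiplicity one. The main obstacle will be steps two and three: correctly extracting the arrows of $\Gamma$ from the submodule-structure theorem for this block, determining precisely which vertices survive the deletion and at which homological degree, and making the $\det$-twist symmetry precise enough to glue the two halves of the resolution.

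The geometric description then falls out of the resolution. As $\Del((2n),\emp)=\CC[V]$ with $V=\CC^{2n}$, Theorem \ref{tight 1}(b) shows the defining ideal of $L((2n),\emp)$ in $\CC[V]$ is generated by the images of the standard modules appearing in $\Del_1=\Del((2n-1,1),\emp)\oplus\Del((n-1),(n+1))$. I would identify the image of $\Del((2n-1,1),\emp)$ with the span of the quadrics $x_i^2-x_j^2$---a copy of the $W$-irreducible $(2n-1,1)\boxtimes\emp$ of singular vectors, in degree $2$ at $c=1/2n$---and the image of $\Del((n-1),(n+1))$ with the copy of the $W$-irreducible $((n-1),(n+1))$ inside the squarefree degree-$(n+1)$ monomials $x_{i_1}\cdots x_{i_{n+1}}$ (this isotypic component occurring there with multiplicity one). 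A short commutative-algebra computation then shows that $\CC[V]$ modulo the ideal generated by these two isotypic copies is the coordinate ring of the stated scheme-theoretic intersection, whose $0$-dimensionality is consistent with the resolution length $2n$. (Alternatively, the identification of the radical of $\Del_c(\Triv)$ at these parameters can be taken directly from \cite{Gri}.)
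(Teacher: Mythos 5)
Your plan follows the paper's own route in all essentials: tight multiplicities for the principal block from the label condition of Theorem \ref{tight thm}, the explicit graph $\Gamma$ of homomorphisms between standards in the $B_{2n}$ block, and the path-counting formula of Lemma \ref{inverse graded dec} (equivalently the algorithm of Conjecture \ref{bgg conj}, whose verification for this block is exactly Corollary \ref{bgg B2n}) to extract the alternating sum. For the geometric statement your identification of the two generators of $\Rad\Delta_c(\Triv)$ --- the quadrics $x_i^2-x_j^2$ spanning a copy of $S^{((2n-1,1),\emp)}$ in degree $2$, and the squarefree degree-$(n+1)$ monomials spanning $S^{((n-1),(n+1))}$ --- is the same computation the paper performs by combining Lemma 2.9 of \cite{BGS} with Theorem \ref{submodule structure}, so that part is fine (and indeed the squarefree monomials of degree $n+1$ form that irreducible on the nose, with no complement). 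The one place you genuinely diverge is your step two. The paper does not use duality to obtain the terms $\Delta_{2n-i}$: it computes $B_{\mu,\Triv}(-1)$ uniformly for every $\mu$ in the block, excluding the $\mu=(\emp,\mu^1)$ with long first row by incomparability, killing the $\mu=(\mu^0,\emp)$ with long first column by an explicit cancellation, and handling everything else (both halves of the resolution at once) by part (\ref{(a)}) of the argument following Conjecture \ref{bgg conj}. Your duality shortcut does not work as stated: the contravariant duality $\delta$ of $\OO_c$ sends standards to costandards, so it converts a BGG resolution into a costandard coresolution rather than another standard resolution, and twisting by $\det$ changes the parameter rather than giving an autoequivalence of $\OO_c$. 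To make the symmetry honest you would need either self-duality of the minimal $\CC[V]$-free resolution of $L$ (i.e.\ the Gorenstein property, which requires knowing the socle of $L$ over $\CC[V]$ is one-dimensional, not just that the Hilbert series is palindromic) or the order-reversing symmetry $(\lambda^0,\lambda^1)\mapsto((\lambda^1)^t,(\lambda^0)^t)$ of $\Gamma$ itself. Since your step three already contains the machinery to run the direct computation over the whole range $0\le j\le 2n$, this is a repairable detour rather than a fatal gap, but as written it is the weak link in the argument.
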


\begin{conjecture}\label{intro bgg conj}
For $0\leq i \leq n$, the $i$'th term of the minimal BGG resolution of $L_c((2n),\emp)$ is $\Del_i$ and the $(2n-i)$'th term is $\Del_{2n-i}$. 
\end{conjecture}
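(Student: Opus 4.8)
The plan is to build the complex $\Del_\bullet$ of Theorem~\ref{intro bgg thm} explicitly, with differentials assembled from the known maps between standard modules, and to prove directly that it is a \emph{minimal} resolution of $L((2n),\emp)$. First I would record that the hypotheses of Theorems~\ref{tight 1} and \ref{intro main} are met here: for $B_{2n}=G(2,1,2n)$ with equal parameters $1/2n$ — equivalently, in the normalization of the introduction, $c_0=d_0=-d_1=1/2n$, so that the only labels are $(0,0)$ and $(1,-1)$ — the chain condition of Theorem~\ref{intro main} holds, and hence $L((2n),\emp)$ admits a minimal BGG resolution $Q_\bullet$. Since $C$ is a highest weight category, for such a minimal resolution the multiplicity of $\Del(\mu)$ in $Q_j$ equals $\dim_\CC\Ext^j(L((2n),\emp),\nabla(\mu))$ (in the graded sense), so the terms of $Q_\bullet$ are canonically determined and it suffices to identify them with the $\Del_j$ of Theorem~\ref{intro bgg thm}.

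I would next try to halve the work using self-duality. The palindromic graded dimension $\dim_\CC L^d=\dim_\CC L^{N-d}$ of \eqref{graded dim}, together with the geometric description in Theorem~\ref{intro bgg thm}, reflects a duality on $L((2n),\emp)$ (it is, up to a character twist, isomorphic to its own contragredient). Transporting this duality through the identification of standard modules with their underlying free $\CC[x_1,\dots,x_{2n}]$-modules makes the minimal BGG resolution self-dual up to a shift, and this duality interchanges $\Del_i$ with $\Del_{2n-i}$ precisely as the transposition of bipartitions in the statement predicts. Granting this, it is enough to prove that the truncation $\Del_n\to\cdots\to\Del_1\to\Del_0$ computes the beginning of the minimal BGG resolution; the tail is then forced, and one also recovers that the resolution has length exactly $2n$ (matching the codimension $2n-1$ of the arrangement plus one, as expected for a non-Cohen--Macaulay-type union of lines).

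For the construction itself I would use \cite{Gri} together with Theorem~\ref{submodule structure} to write down, for the principal block at $c=1/2n$, the full submodule lattice of each $\Del(\mu)$ and all homomorphisms $\Del(\mu)\to\Del(\nu)$ in the block, which for $r=2$ are determined up to scalar by the graph $\Ga$. The summands of $\Del_i$ — namely $\Del((2n-i,1^i),\emp)$ and $\Del((n-a,1^{i-a}),(n+a+1-i,1^{a-1}))$ for $1\le a\le i$ — should appear as exactly the highest weights of a generating set of submodules, of the type produced by Theorem~\ref{submodule structure}, inside the summands of $\Del_{i-1}$; I would check this combinatorially, noting that no bipartition occurs in two consecutive $\Del_i$ (a short check), and then fix signs so that $d_{i-1}d_i=0$, just as one builds a Koszul- or BGG-type complex. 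Exactness I would prove by induction on $n$, and within fixed $n$ on homological degree: starting from $\Del_0=\Del((2n),\emp)\onto L((2n),\emp)$, Theorem~\ref{submodule structure} presents $\rad\Del((2n),\emp)$ as the sum of the images of the summands of $\Del_1$, and one shows that the syzygy at each successive stage is generated by exactly the next layer; the Grothendieck-group identity $[L((2n),\emp)]=\sum_i(-1)^i[\Del_i]$ — which is the content of Theorem~\ref{intro bgg thm} and follows from \eqref{graded dim}, Corollary~\ref{dim formula}, and the $\Hom$-spaces between standards in the block — is the numerical constraint that forces each inductive step to close up with no leftover homology. Minimality is then automatic: every nonzero map between standards in this block strictly raises degree (the radical of each $\Del(\mu)$ is generated in positive degree), so the matrices $d_i$ have all entries in the maximal ideal $\mm$ of $\CC[x]$; hence $\Del_\bullet$ is literally the minimal free resolution of $L((2n),\emp)$ and the $\Del_i$ are its terms.

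The main obstacle is the exactness step. Assembling the differentials and checking $d^2=0$ is a finite but laborious computation with the explicit formulas of \cite{Gri}; the delicate point is showing the syzygies carry \emph{no extra generators} beyond those predicted, i.e.\ that at each homological degree $\ker d_{i-1}$ is generated as an $H_c$-module by the highest weight vectors of the summands of $\Del_i$ and nothing more. This requires ruling out ``unexpected'' maps and relations using the fine structure of the standard modules, and the induction must carry precise control of grading shifts so that the Euler-characteristic bookkeeping — which pins down the alternating sum but not the individual terms a priori — can be upgraded to termwise equality. Equivalently, this is exactly the task of verifying that the algorithm of Conjecture~\ref{bgg conj} (checked for $r=2$ in Theorem~\ref{bgg B2n}) outputs the complex $\Del_\bullet$ when run at the vertex $((2n),\emp)$; carrying out that verification uniformly in $n$ is precisely what would turn Conjecture~\ref{intro bgg conj} into a theorem.
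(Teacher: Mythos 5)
The statement you are addressing is left as Conjecture \ref{intro bgg conj} in the paper: there is no proof of it there to compare against. What the paper actually establishes (Theorem \ref{intro bgg thm}, via Corollary \ref{bgg B2n} and Lemma \ref{inverse graded dec}) is only the Grothendieck-group identity $[L((2n),\emp)]=\sum_i(-1)^i[\Del_i]$, i.e.\ a character formula. Your framing of the problem is correct — for a minimal BGG resolution $Q_\bullet$ the multiplicities $[Q_j:\Del(\mu)]=\dim\Ext^j(L,\nabla(\mu))$ are canonically determined, so one must compute these Ext groups individually rather than merely their alternating sum — but your proposal does not close that gap. The Euler-characteristic bookkeeping cannot ``force each inductive step to close up with no leftover homology'': a summand $\Del(\mu)$ occurring in both $Q_j$ and $Q_{j+1}$ cancels in the alternating sum and is invisible to the character formula, so the numerical constraint is strictly weaker than termwise identification. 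Your check that no bipartition occurs in two consecutive \emph{predicted} terms does not rule out extra unpredicted summands appearing in pairs in consecutive degrees of the \emph{actual} resolution. What is needed is either a vanishing or parity statement ($\Ext^j(L,\nabla(\mu))\neq 0$ for at most one $j$, e.g.\ from Koszulity of the block together with purity of the relevant graded multiplicities) or the explicit syzygy computation you describe; you yourself flag that carrying the latter out uniformly in $n$ is exactly the open problem, so what you have written is a program rather than a proof.

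Two smaller cautions. First, the self-duality argument needs more care: the duality $\delta$ of the paper sends standard objects to costandard objects, so it converts a BGG resolution into a co-resolution by costandards rather than another BGG resolution; to exchange $\Del_i$ with $\Del_{2n-i}$ (transposed bipartitions with swapped components) you would need to compose $\delta$ with a twist by a linear character and verify that this preserves the block and the parameter, none of which is automatic. Second, minimality in the sense of commutative algebra (entries of the differential in the irrelevant maximal ideal) does follow once the complex is built with differentials landing in the radicals, but that presupposes the exactness you have not established. In short: the approach is reasonable and consistent with the paper's expectations, but the decisive step — ruling out extra syzygy generators at each stage — is precisely what remains open, and nothing in the proposal supplies it.
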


\textbf{Acknowledgments.} We acknowledge Fondecyt proyecto 1110072 for financing E.N.'s visit to Chile in March 2015 where this project started, and S.G. acknowledges Fondecyt proyecto 1151275 for continuing financial support. E.N. is grateful to O. Dudas and H. Miyachi for enlightening discussions. We thank S. Arkhipov, R. Bezrukavnikov, V. Ginzburg, and I. Loseu for helpful remarks about graded BGG reciprocity, and C. Berkesch-Zamaere and S. Sam for sharing their expertise in commutative algebra.

\section{BGG resolutions}

\subsection{} In this section we give definitions, fix notation, and prove some very general results on BGG resolutions in a class of categories containing highest weight categories with finitely many isoclasses of simple objects.

\subsection{Axiomatics} \label{axioms} Let $F$ be a field and let $C$ be an artinian $F$-linear abelian category. Assume $\Lambda$ is a finite poset such that for each $\lambda \in \Lambda$, there is an object $\Delta(\lambda)\in C$ satisfying:
\begin{itemize}
\item[(a)] The objects $L(\lambda)=\Del(\lambda)/\Rad(\Delta(\lambda))$ are a complete set of non-isomorphic simple objects of $C$,
\item[(b)] we have $\End(\Del(\lambda))=F=\End(L(\lambda))$, and
\item[(c)] if $[\Rad(\Del(\lambda)):L(\mu)] \neq 0$ then $\mu < \lambda$.
\end{itemize} For instance, if $C$ is a highest weight category in the sense of \cite{CPS} with finitely many isoclasses of simple objects, then it satisfies these axioms. We will refer to the objects $\Delta(\lambda)$ as \emph{standard objects}.

\subsection{Tight multiplicities and singular vectors} Let $C$ be a category as in \ref{axioms}.
By right exactness of Hom we have
\begin{equation} \label{hom inequality}
\dim\Hom(\Del(\lambda),M) \leq \dim\Hom(\Del(\lambda),N)+\dim\Hom(\Del(\lambda),M/N)
\end{equation} for each object $M$ in $C$ and every subobject $N$ of $M$. Thus by induction on the length of $M$ we have 
$$\dim\Hom(\Del(\lambda),M)\leq [M:L(\lambda)].$$ An object $M$ in $C$ has \emph{tight multiplicities} if for all $\lambda \in \Lambda$ we have $$\dim\Hom(\Del(\lambda),M)=[M:L(\lambda)].$$ Thus if $M$ has tight multiplicities, all occurrences of an irreducible factor $L(\lambda)$ in a composition series for $M$ are accounted for by homomorphisms from $\Del(\lambda)$ to $M$. 

\begin{lemma} \label{inherit tight}
If $M$ has tight multiplicities, so does every subobject and every quotient object. If $M$ and $N$ have tight multiplicities, so does $M \oplus N$.
\end{lemma}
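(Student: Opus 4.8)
The plan is to prove each of the three assertions of Lemma~\ref{inherit tight} directly from the inequality \eqref{hom inequality} and the definition of tight multiplicities, noting that the statement reduces to a bookkeeping argument about lengths and composition factors.

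First I would handle subobjects and quotients simultaneously. Suppose $M$ has tight multiplicities and let $N \subseteq M$ be a subobject with quotient $M/N$. Fix $\lambda \in \Lambda$. Applying \eqref{hom inequality} gives
$$\dim\Hom(\Del(\lambda),M) \leq \dim\Hom(\Del(\lambda),N)+\dim\Hom(\Del(\lambda),M/N).$$
On the other hand, the general bound $\dim\Hom(\Del(\lambda),-) \leq [-:L(\lambda)]$ from the discussion preceding the lemma applies to both $N$ and $M/N$, so the right-hand side is at most $[N:L(\lambda)]+[M/N:L(\lambda)]$, which equals $[M:L(\lambda)]$ by additivity of composition multiplicities in short exact sequences. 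But the left-hand side equals $[M:L(\lambda)]$ since $M$ has tight multiplicities. Hence every inequality in the chain is an equality; in particular $\dim\Hom(\Del(\lambda),N)=[N:L(\lambda)]$ and $\dim\Hom(\Del(\lambda),M/N)=[M/N:L(\lambda)]$. Since $\lambda$ was arbitrary, both $N$ and $M/N$ have tight multiplicities.

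For the direct sum statement, suppose $M$ and $N$ both have tight multiplicities. Fix $\lambda\in\Lambda$. Because $\End(\Del(\lambda))=F$ and $\Del(\lambda)$ has simple head $L(\lambda)$, any nonzero homomorphism $\Del(\lambda)\to M\oplus N$ has image not contained in a single summand only in a controlled way; more cleanly, I would argue that $\Hom(\Del(\lambda),M\oplus N)\cong \Hom(\Del(\lambda),M)\oplus\Hom(\Del(\lambda),N)$ as vector spaces by the universal property of the product, so $\dim\Hom(\Del(\lambda),M\oplus N)=\dim\Hom(\Del(\lambda),M)+\dim\Hom(\Del(\lambda),N)=[M:L(\lambda)]+[N:L(\lambda)]=[M\oplus N:L(\lambda)]$, using tightness of $M$ and $N$ and additivity of composition multiplicities. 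Thus $M\oplus N$ has tight multiplicities. (Alternatively, this case follows from the subobject case together with right-exactness, viewing $M$ as a subobject and $N$ as a quotient of $M\oplus N$, but one still needs the reverse inequality, so the direct product decomposition of $\Hom$ is the cleanest route.)

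I do not expect any genuine obstacle here: the whole lemma is a consequence of the sandwiching between \eqref{hom inequality} and the universal bound $\dim\Hom(\Del(\lambda),-)\le[-:L(\lambda)]$, forcing equalities throughout. The only point requiring a small amount of care is making sure the direct-sum case really gives an equality of dimensions rather than just an inequality, which is why I would invoke the product decomposition $\Hom(\Del(\lambda),M\oplus N)=\Hom(\Del(\lambda),M)\oplus\Hom(\Del(\lambda),N)$ explicitly rather than trying to derive it from \eqref{hom inequality} alone.
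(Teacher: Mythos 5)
Your proof is correct. The underlying mechanism is the same one the paper uses — sandwiching $\dim\Hom(\Del(\lambda),-)$ between the subadditivity inequality \eqref{hom inequality} and the universal upper bound $\dim\Hom(\Del(\lambda),-)\le[-:L(\lambda)]$ — but your organization is genuinely different and, I would say, cleaner. The paper proves only the subobject case directly, by induction on the length of $M/N$ (reducing to $M/N$ simple and then extracting the inequality $\dim\Hom(\Del(\mu),N)\ge\dim\Hom(\Del(\lambda),M)-\delta_{\lambda\mu}$), and then disposes of the quotient case with an appeal to duality. Your single chain
$[M:L(\lambda)]=\dim\Hom(\Del(\lambda),M)\le\dim\Hom(\Del(\lambda),N)+\dim\Hom(\Del(\lambda),M/N)\le[N:L(\lambda)]+[M/N:L(\lambda)]=[M:L(\lambda)]$
forces termwise equality and delivers both the subobject and quotient statements at once, with no induction and no duality argument; the only extra input is additivity of composition multiplicities, which the paper's induction uses implicitly anyway. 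For the direct sum, the paper simply declares it obvious; your appeal to $\Hom(\Del(\lambda),M\oplus N)\cong\Hom(\Del(\lambda),M)\oplus\Hom(\Del(\lambda),N)$ is exactly the right justification, and you are right that this equality of dimensions cannot be extracted from \eqref{hom inequality} alone.
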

\begin{proof}
It is obvious that direct sums of objects with tight multiplicities have tight multiplicities. The proofs for quotient objects and subobjects are dual to one another; we prove that every subobject inherits tight multiplicities.

Suppose $M$ has tight multiplicities and that $N \subseteq M$ is a subobject. We prove that $N$ has tight multiplicities by induction on the length of $M/N$, which we may therefore assume to be $1$. If $M/N \cong L(\lambda)$, then using \eqref{hom inequality} we obtain
$$\dim\Hom(\Del_c(\mu),N) \geq \dim\Hom(\Del_c(\lambda),M)-\delta_{\lambda \mu}$$ where $\delta_{\lambda \mu}$ is $0$ unless $\lambda=\mu$ in which case it is $1$. This proves the result.  \end{proof}

An object $M$ in $C$ is \emph{generated by singular vectors} if it is equal to the largest subobject $N$ of $M$ containing the images of all homomorphisms from standard objects $\Delta(\lambda)$ to $M$ (one could also write \emph{generated by highest weight vectors}, the more common terminology in classical Lie theory).

\begin{lemma} \label{tight iff sing}
Suppose $M$ is an object in $C$. Then $M$ has tight multiplicities if and only if every submodule of $M$ is generated by singular vectors.
\end{lemma}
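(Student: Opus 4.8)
The plan is to prove the two implications separately, in each case reducing the question to a single short exact sequence and combining the basic inequality $\dim\Hom(\Delta(\lambda),M)\leq[M:L(\lambda)]$ with Lemma \ref{inherit tight}.

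For the ``only if'' direction I would first invoke Lemma \ref{inherit tight}: if $M$ has tight multiplicities, then so does every subobject $N\subseteq M$, so it is enough to show that \emph{any} object $N$ with tight multiplicities is generated by singular vectors. Let $N'\subseteq N$ be the subobject generated by the images of all homomorphisms $\Delta(\lambda)\to N$ ($\lambda\in\Lambda$), and suppose toward a contradiction that $N'\subsetneq N$. Since $C$ is artinian, the nonzero object $N/N'$ has a simple subobject, necessarily isomorphic to some $L(\mu)$; let $N''$ be its preimage in $N$, so that $N'\subseteq N''\subseteq N$ and $N''/N'\cong L(\mu)$. The crux is that every homomorphism $\Delta(\mu)\to N''$ has image contained in $N'$: composing with the inclusion $N''\hookrightarrow N$ does not change the image, and that image lies in $N'$ by construction. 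Hence $\Hom(\Delta(\mu),N')\to\Hom(\Delta(\mu),N'')$ is an isomorphism, so $\dim\Hom(\Delta(\mu),N'')=\dim\Hom(\Delta(\mu),N')\leq[N':L(\mu)]=[N'':L(\mu)]-1$. But $N''\subseteq N$ has tight multiplicities by Lemma \ref{inherit tight}, forcing $\dim\Hom(\Delta(\mu),N'')=[N'':L(\mu)]$, a contradiction. Therefore $N'=N$.

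For the ``if'' direction I would argue by induction on the length of $M$, the case $M=0$ being trivial. Assume $M\neq 0$ and pick a maximal subobject $N\subsetneq M$, so that $M/N\cong L(\nu)$ for some $\nu\in\Lambda$. Every subobject of $N$ is a subobject of $M$ and hence generated by singular vectors, so by induction $N$ has tight multiplicities. Applying $\Hom(\Delta(\lambda),-)$ to $0\to N\to M\to L(\nu)\to 0$ and using $\dim\Hom(\Delta(\lambda),L(\nu))=\delta_{\lambda\nu}$ (immediate from $L(\lambda)=\Delta(\lambda)/\Rad(\Delta(\lambda))$ and $\End(L(\nu))=F$), one gets for $\lambda\neq\nu$ the chain $[M:L(\lambda)]=[N:L(\lambda)]=\dim\Hom(\Delta(\lambda),N)\leq\dim\Hom(\Delta(\lambda),M)\leq[M:L(\lambda)]$, so all terms agree. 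For $\lambda=\nu$ it only remains to exhibit a homomorphism $\Delta(\nu)\to M$ whose image is not contained in $N$; this is the one place the hypothesis on $M$ itself is used. Since $M$ is generated by singular vectors and $N$ is proper, some $f\colon\Delta(\nu')\to M$ has $\im(f)\not\subseteq N$; then the composite $\Delta(\nu')\to M\to M/N\cong L(\nu)$ is surjective, which forces $\nu'=\nu$. Consequently $\Hom(\Delta(\nu),M)\to\Hom(\Delta(\nu),L(\nu))$ is surjective, and $\dim\Hom(\Delta(\nu),M)=\dim\Hom(\Delta(\nu),N)+1=[N:L(\nu)]+1=[M:L(\nu)]$, as desired.

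The step I expect to be the real obstacle is the key observation in the ``only if'' direction --- that no singular vector reaches the bottom copy of $L(\mu)$ inside $N''$. Making this precise requires realizing $L(\mu)$ as a \emph{sub}object (not a quotient) of $N/N'$, so that $N'\subseteq N''$, together with the elementary but essential point that a map out of $\Delta(\mu)$ has the same image whether its target is taken to be $N''$ or $N$. Everything else --- existence of simple subobjects in a nonzero object of an artinian category, the identity $\dim\Hom(\Delta(\lambda),L(\nu))=\delta_{\lambda\nu}$, and the bookkeeping with composition lengths --- is routine given the axioms in \ref{axioms}.
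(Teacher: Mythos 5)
Your proof is correct and follows essentially the same route as the paper's: both directions rest on Lemma \ref{inherit tight}, the observation that every homomorphism from a standard object into $M$ already has image in the subobject generated by singular vectors, and induction on length using that the penultimate (maximal) subobject inherits the hypothesis and hence tight multiplicities. The only differences are cosmetic --- you finish the forward direction by contradiction at a specific simple subquotient where the paper uses a direct chain of inequalities forcing $N=M$, and you treat the cases $\lambda\neq\nu$ and $\lambda=\nu$ separately where the paper sums over all $\lambda$ along a composition series.
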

\begin{proof}
First suppose $M$ has tight multiplicities. Then by Lemma \ref{inherit tight} the same is true of every subobject, so it suffices to prove that if $M$ has tight multiplicities then it is generated by singular vectors. Let $N$ be the subobject of $M$ generated by the images of homomorphisms from standard objects to $M$. For each $\lambda \in \Irr W$ we have
\begin{align*}
[M:L(\lambda)]&=\dim\Hom(\Del(\lambda),M)=\dim\Hom(\Del(\lambda),N) \\ &\leq [N:L(\lambda)] \leq [M:L(\lambda)], \end{align*} implying equality throughout and hence $N=M$.

Conversely, assume that every subobject of $M$ is generated by singular vectors. We prove by induction on the length of $M$ that $M$ has tight multiplicities. This is clear if the length of $M$ is $1$. Let $$0=M_0 \subseteq M_1 \subseteq \cdots \subseteq M_\ell=M$$ be a composition series. By assumption every subobject of $M_{\ell-1}$ is generated by singular vectors; by induction, $M_{\ell-1}$ has tight multiplicities. This implies that
$$\sum_\lambda \dim\Hom(\Del(\lambda),M)) \geq \ell-1.$$ But if equality holds then the subobject $N$ of $M$ generated by homomorphisms from standard objects is contained in $M_{\ell-1}$, contradicting that $M$ is generated by singular vectors.
\end{proof}

\begin{lemma} \label{rad enough}
If the radical of each $\Delta(\lambda)$ is generated by singular vectors, then every subobject of every standard object is generated by singular vectors.
\end{lemma}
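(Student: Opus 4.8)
The plan is to prove the equivalent statement — equivalent by Lemma~\ref{tight iff sing} — that every standard object has tight multiplicities, by induction on the poset $\Lambda$. If $\lambda$ is minimal then axiom~(c) forces $\Rad(\Delta(\lambda))=0$, so $\Delta(\lambda)=L(\lambda)$ is simple and tight; this is the base case. So fix $\lambda$ and assume $\Delta(\mu)$ has tight multiplicities for every $\mu<\lambda$.

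Set $D:=\Rad(\Delta(\lambda))$. Because $\Delta(\lambda)$ has simple head $L(\lambda)$, it has a unique maximal subobject, namely $D$; hence any nonzero homomorphism $\Delta(\mu)\to\Delta(\lambda)$ with $\mu\neq\lambda$ has image inside $D$, and $\Hom(\Delta(\mu),\Delta(\lambda))=\Hom(\Delta(\mu),D)$ for such $\mu$. Since also $[\Delta(\lambda):L(\mu)]=\delta_{\lambda\mu}+[D:L(\mu)]$, and axiom~(b) handles $\mu=\lambda$ (giving $\dim\Hom(\Delta(\lambda),\Delta(\lambda))=1=[\Delta(\lambda):L(\lambda)]$) while the general inequality $\dim\Hom(\Delta(\mu),\Delta(\lambda))\le[\Delta(\lambda):L(\mu)]$ handles those $\mu$ with $[\Delta(\lambda):L(\mu)]=0$, it suffices to show that $D$ has tight multiplicities.

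Now use the hypothesis: $D$ is generated by singular vectors, so, $D$ having finite length, $D=\sum_{i=1}^{k}\im(\phi_i)$ for finitely many homomorphisms $\phi_i\colon\Delta(\mu_i)\to\Delta(\lambda)$ with image in $D$. For each nonzero $\phi_i$, the head of $A_i:=\im(\phi_i)$ is $L(\mu_i)$, a composition factor of $D$, so $\mu_i<\lambda$ by axiom~(c); thus $A_i$ is a quotient of $\Delta(\mu_i)$, which is tight by the inductive hypothesis, and so $A_i$ is tight by Lemma~\ref{inherit tight}. The external direct sum $\bigoplus_{i=1}^{k}A_i$ is therefore tight (Lemma~\ref{inherit tight} again), and the summation homomorphism $\bigoplus_{i=1}^{k}A_i\to D$, $(a_i)_i\mapsto\sum_i a_i$, is surjective. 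Hence $D$ is a quotient of a tight object, so $D$ has tight multiplicities by the final assertion of Lemma~\ref{inherit tight}. This completes the induction, and Lemma~\ref{tight iff sing} translates the conclusion back into the assertion of the lemma.

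The only real subtlety is that tightness does not propagate along internal sums of subobjects inside $\Delta(\lambda)$: a sum of tight subobjects of a fixed object need not a priori be tight, so arguing directly with the equality $D=\sum_i A_i$ inside $\Delta(\lambda)$ is a dead end. The device that makes it work is to pass to the external direct sum $\bigoplus_i A_i$, whose tightness is guaranteed by Lemma~\ref{inherit tight}, and then realize $D$ as a quotient of it; everything else is bookkeeping with the axioms.
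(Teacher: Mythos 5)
Your proof is correct and follows essentially the same route as the paper's: induction on the poset, reduction to showing $\Rad\Delta(\lambda)$ is tight by exhibiting it as a quotient of an external direct sum of tight objects coming from standards $\Delta(\mu)$ with $\mu<\lambda$, and then Lemmas~\ref{inherit tight} and~\ref{tight iff sing}. The only (cosmetic) differences are that the paper surjects from $\bigoplus_i\Delta(\mu_i)$ rather than from $\bigoplus_i\im(\phi_i)$, and phrases the induction in the ``generated by singular vectors'' language instead of the equivalent ``tight multiplicities'' language.
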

\begin{proof}
We proceed by induction on the poset $\Lambda$. If $\lambda$ is minimal then $L(\lambda)=\Del(\lambda)$ is simple and hence every subobject is generated by singular vectors. In general, we have $$L(\lambda)=\Del(\lambda)/\Rad\Del(\lambda)$$
with $\Rad \Del(\lambda)$ generated by singular vectors by hypothesis, so there is a sum $\Del_1$ of standard objects surjecting onto $\Rad \Del(\lambda)$. Using the axioms \ref{axioms} we may assume that the standard summands in $\Del_1$ are all smaller than $\lambda$. So the inductive hypothesis and Lemmas \ref{inherit tight} and \ref{tight iff sing} together imply that every subobject of $\Del_1$ is generated by singular vectors, and hence the same holds for its quotient $\Rad\Del(\lambda)$. But every proper subobject of $\Del(\lambda)$ is a subobject of $\Rad\Del(\lambda)$, so every subobject of $\Del(\lambda)$ is generated by singular vectors.
\end{proof}

\subsection{BGG resolutions}

A \emph{Bernstein-Gelfand-Gelfand resolution} (or briefly, \emph{BGG resolution}) of an object $M$ of $C$ is  an exact sequence  $$\cdots \longrightarrow \Del_m \lra  \Del_{m-1} \lra \cdots \lra \Del_1 \lra \Del_0\lra M \lra 0$$ in which each $\Del_i$ is a (possibly empty) finite direct sum of standard objects. It is \emph{minimal} if the differential maps $\Delta_i$ into $\mathrm{Rad}(\Delta_{i-1})$. We observe that a minimal BGG resolution necessarily has $\Del_i=0$ for $i$ sufficiently large: since the radical of $\Del_i$ is the sum of the radicals of its standard summands, our axioms \ref{axioms} imply that for each standard summand $\Del(\lambda)$ of $\Del_i$ there is a standard summand $\Del(\mu)$ of $\Del_{i-1}$ with $\lambda < \mu$.

\begin{lemma} \label{minimal lemma}
If $\Del_1 \rightarrow \Delta_0 \rightarrow M \rightarrow 0$ is exact, where $\Del_1=\bigoplus_{i \in I} \Delta(\lambda_i)$ and $\Del_0=\bigoplus_{j \in J} \Delta(\mu_j)$ are finite sums of standard objects, then there are subsets $I_0 \subseteq I$ and $J_0 \subseteq J$ and an exact sequence 
$$\bigoplus_{i \in I_0} \Delta(\lambda_i) \rightarrow \bigoplus_{j \in J_0} \Delta(\mu_j) \rightarrow M \rightarrow 0$$ so that the image of $\bigoplus_{i \in I_0} \Delta(\lambda_i)$ is contained in $\bigoplus_{j \in J_0} \mathrm{Rad}(\Delta(\mu_j))$.
\end{lemma}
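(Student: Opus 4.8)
The plan is to extract a minimal generating set at each of the two stages, working from the right. First I would handle the surjection $\Del_0 \to M$. Since $C$ is artinian, the object $M$ has finite length, and the images of the finitely many maps $\Delta(\mu_j) \to M$ together span $M$. I would choose a subset $J_0 \subseteq J$ minimal with respect to the property that $\bigoplus_{j \in J_0} \Delta(\mu_j)$ still surjects onto $M$; finiteness of $J$ guarantees such a $J_0$ exists. Set $\Del_0' = \bigoplus_{j \in J_0} \Delta(\mu_j)$ and let $K = \ker(\Del_0' \to M)$. The key point to verify is that $K \subseteq \bigoplus_{j \in J_0} \Rad(\Delta(\mu_j))$: if some element of $K$ had nonzero image in the head $L(\mu_{j_0})$ of one summand, then by axiom (b) (so $\End(\Delta(\mu_{j_0})) = \End(L(\mu_{j_0})) = F$) and axiom (c) one shows that the composite $\Delta(\mu_{j_0}) \to \Del_0' \to M$ can be replaced by a combination of the others, contradicting minimality of $J_0$. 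This is the step I expect to require the most care, since it is where the axioms genuinely enter; the argument is essentially that a map out of $\Delta(\mu_{j_0})$ whose restriction to the ``diagonal copy'' lands in $\Rad$ factors through $\Rad(\Delta(\mu_{j_0}))$, which by axiom (c) only involves strictly smaller labels, and hence its image is already covered by the span of the other summands together with lower-order contributions — one then iterates or argues by induction on the poset.

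Next I would address the left-hand map. The original sequence tells us that the image of $\Del_1 \to \Del_0$ equals $\ker(\Del_0 \to M)$. Composing $\Del_1 \to \Del_0$ with the projection $\Del_0 \to \Del_0'$ (which makes sense once we know, as in the previous paragraph, that $\Del_0 = \Del_0' \oplus \Del_0''$ with $\Del_0'' \to M$ factoring through $\Del_0'$ up to the identification — more precisely, after the minimality reduction the complement maps into the image of $\Del_0'$) yields a map $\Del_1 \to \Del_0'$ whose image is exactly $K$. Now repeat the minimal-subset procedure: choose $I_0 \subseteq I$ minimal such that $\bigoplus_{i \in I_0} \Delta(\lambda_i)$ still surjects onto $K$. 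The resulting sequence $\bigoplus_{i \in I_0} \Delta(\lambda_i) \to \bigoplus_{j \in J_0} \Delta(\mu_j) \to M \to 0$ is exact by construction, and the image of the left map is $K \subseteq \bigoplus_{j \in J_0} \Rad(\Delta(\mu_j))$ by the first paragraph, which is the desired conclusion.

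The one subtlety I would make precise in writing this up is the decomposition $\Del_0 = \Del_0' \oplus \Del_0''$ and the claim that $\Del_0'' \to M$ is ``absorbed'' by $\Del_0'$: minimality of $J_0$ does not literally split $\Del_0$, so instead I would phrase the reduction as passing to the direct summand $\Del_0'$ and observing that $\ker(\Del_0 \to M)$ maps onto $\ker(\Del_0' \to M)$ — equivalently, that $\Del_1 \to \Del_0 \to \Del_0'$ has image all of $K$. This follows because $\Del_0 \to \Del_0'$ is split surjective and $\ker(\Del_0 \to M)$ surjects onto $\ker(\Del_0' \to M)$ by a diagram chase (the snake lemma applied to the surjection of complexes $[\Del_0 \to M] \to [\Del_0' \to M]$). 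Beyond that caveat, the whole argument is formal, using only the artinian hypothesis, finiteness of $I$ and $J$, and axioms (a)–(c) of \ref{axioms}; no properties specific to Cherednik algebras are needed, which is consistent with the lemma's placement in the general ``Axiomatics'' development.
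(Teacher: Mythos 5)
Your first step is sound: if $J_0$ is minimal with $\Del_0'=\bigoplus_{j\in J_0}\Delta(\mu_j)\onto M$, then $K=\ker(\Del_0'\to M)\subseteq\bigoplus_{j\in J_0}\Rad(\Delta(\mu_j))$. But the clean justification is simply that the radical of a finite-length object is superfluous: if the image of $K$ in the head of $\Del_0'$ were nonzero it would surject onto some summand $L(\mu_{j_0})$, whence $\bigoplus_{j\neq j_0}\Delta(\mu_j)+K=\Del_0'$ and $j_0$ could be dropped, contradicting minimality. Axioms (b) and (c) and the induction on the poset you sketch are not really what is needed there.

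The second step has a genuine gap. The composite $\Del_1\to\Del_0\xrightarrow{\pi}\Del_0'$ does not have image $K$; in general its image is not even contained in $K$. The square comparing $[\Del_0\to M]$ with $[\Del_0'\to M]$ via the projection $\pi$ does not commute, because $f|_{\Del_0''}$ need not vanish, so the snake lemma you invoke does not apply. Concretely, take $\Del_0=\Delta(\mu)\oplus\Delta(\mu)\to M=\Delta(\mu)$, $(x,y)\mapsto x+y$, with $\Del_1=\Delta(\mu)$ mapping isomorphically onto the antidiagonal kernel: then $J_0=\{1\}$ and $K=0$, but $\pi(\ker(\Del_0\to M))=\Del_0'\neq 0$ and the composite $\Del_1\to\Del_0'\to M$ is nonzero, so your proposed sequence is not even a complex. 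Repairing this would require a retraction $\phi:\Del_0\to\Del_0'$ with $f'\circ\phi=f$, i.e.\ a lift of $f|_{\Del_0''}$ through the surjection $f':\Del_0'\to M$ — and standard objects are not projective, so no such lift exists in general. The paper sidesteps this by never separating the two stages: it cancels matched pairs $(\lambda_{i_0},\mu_{j_0})$ one at a time, using $\End(\Delta(\lambda))=F$ to see that a summand of $\Del_1$ whose image avoids the radical maps isomorphically onto a summand of $\Del_0$, and then passes to the quotient by that image, preserving exactness at each step by construction. You should either adopt that cancellation argument or find a genuinely different way to produce a subsum of $\Del_1$ surjecting onto $K$.
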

\begin{proof}
If the image of $\Delta(\lambda_{i_0})$ is not contained in $\bigoplus_{j \in J} \mathrm{Rad}(\Delta(\mu_j))$ then there is some index $j_0 \in J$ so that the composite $\Delta(\lambda_{i_0}) \rightarrow \Delta(\mu_{j_0}) \rightarrow L(\mu_{j_0})$ is non-zero. Since the top of $\Delta(\lambda_{i_0})$ is $L(\lambda_{i_0})$, it follows that $\lambda_{i_0}=\mu_{j_0}$ and hence the map $\Delta(\lambda_{i_0}) \rightarrow \Delta(\mu_{j_0})=\Delta(\lambda_{i_0})$ is a scalar multiple of the identity. In particular the map $\Delta(\lambda_{i_0}) \rightarrow \bigoplus \Delta(\mu_j)$ is injective. Hence the sequence
$$\bigoplus_{i \neq i_0} \Delta(\lambda_i) \rightarrow \left(\bigoplus_{j \in J} \Delta(\mu_j) \right)/\mathrm{Im}(\Delta(\lambda_{i_0})) \rightarrow M \rightarrow 0$$ is exact. Since $$\mathrm{Im}(\Delta(\lambda_{i_0})) \cap \bigoplus_{j \neq j_0} \Delta(\mu_j)=0$$ the map
$$\bigoplus_{j \neq j_0} \Delta(\mu_j) \rightarrow \left(\bigoplus_{j \in J} \Delta(\mu_j) \right)/\mathrm{Im}(\Delta(\lambda_{i_0}))$$ is injective; it is therefore an isomorphism because both sides have the same length. We may thus eliminate $i_0$ and $j_0$ from our index sets. Repeating this procedure proves the lemma.
\end{proof}

If a BGG resolution exists, then $M$ is generated by singular vectors. If an irreducible object $L(\lambda)$ has a BGG resolution, then the first map has domain $\Del_0$ a sum of standard objects; since the only standard object with a non-zero map to $L(\lambda)$ is $\Del(\lambda)$, and since there is a unique such map up to scalars, we may discard all the summands of $\Del_0$ and assume the first step in the resolution is the quotient map $\Del(\lambda) \rightarrow L(\lambda)$. Therefore if $L(\lambda)$ has a BGG resolution then $\Rad \Del(\lambda)$ is generated by singular vectors.

\begin{lemma} \label{sing iff BGG}
If every submodule of every standard module in $C$ is generated by singular vectors then every quotient of a sum of standard modules has a minimal BGG resolution. 
\end{lemma}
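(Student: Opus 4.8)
The plan is to construct a \emph{minimal} BGG resolution of $M$ directly, one step at a time, so that its finiteness becomes automatic. Two preliminary observations will carry all the content. First, if every subobject of every standard object is generated by singular vectors, then by Lemma \ref{tight iff sing} each $\Del(\lambda)$ has tight multiplicities, by Lemma \ref{inherit tight} so does every finite direct sum of standard objects, and a second application of Lemma \ref{tight iff sing} shows that every subobject of a finite direct sum of standard objects is generated by singular vectors. This is the only place the hypothesis enters. Second — and this is the point requiring a small argument — if $X$ is any object of $C$ that is generated by singular vectors, then there is a surjection $\pi\colon\Del_X\onto X$ from a finite direct sum of standard objects with $\Ker\pi\subseteq\Rad\Del_X$, i.e. inducing an isomorphism on tops. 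Indeed, write $X=\sum_f\im(f)$ over the finitely many (by finite length) homomorphisms $f\colon\Del(\mu_f)\to X$ that are needed; applying the right exact functor $(-)/\Rad$ shows the images $\overline{\im(f)}$ span the semisimple object $X/\Rad X$, and since each $\overline{\im(f)}$ is $0$ or isomorphic to $L(\mu_f)$, a minimal spanning subset $S$ among them yields an internal direct sum decomposition $X/\Rad X=\bigoplus_{f\in S}\overline{\im(f)}$. Then $\Del_X:=\bigoplus_{f\in S}\Del(\mu_f)$ maps onto $X$; the map is an isomorphism on tops by construction, hence surjective since a finite length object equal to its own radical vanishes, and its kernel lies in $\Rad\Del_X$.

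With these in hand I would assemble the resolution. A quotient $M$ of a sum of standard objects is itself generated by singular vectors, so the second observation produces $\Del_0\onto M$ with kernel $K_0\subseteq\Rad\Del_0$. Inductively, $K_{i-1}$ is a subobject of the finite direct sum of standard objects $\Del_{i-1}$, hence generated by singular vectors by the first observation, so the second observation produces $\pi_i\colon\Del_i\onto K_{i-1}$ with $\Ker\pi_i\subseteq\Rad\Del_i$; taking the differential $\Del_i\to\Del_{i-1}$ to be $\pi_i$ followed by the inclusion $K_{i-1}\hookrightarrow\Del_{i-1}$ gives a complex that is exact by construction and whose differentials have image $K_{i-1}\subseteq\Rad\Del_{i-1}$, so it is minimal.

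It remains to see the construction terminates, and here I would repeat the argument from the remark preceding Lemma \ref{minimal lemma}: each standard summand $\Del(\lambda)$ of $\Del_i$ maps nonzero into $\Del_{i-1}$ (its restriction of $\pi_i$ is one of the chosen nonzero $f$'s), with image in $\Rad\Del_{i-1}=\bigoplus_j\Rad\Del(\mu_j)$, so for some $j$ the induced nonzero map $\Del(\lambda)\to\Del(\mu_j)$ has image in $\Rad\Del(\mu_j)$; thus $L(\lambda)$ is a composition factor of $\Rad\Del(\mu_j)$ and $\lambda<\mu_j$ by axiom \ref{axioms}(c). Iterating, a standard summand of $\Del_i$ sits at the bottom of a chain of length $i$ in $\Lambda$, so $\Del_i=0$ once $i$ exceeds the length of the longest chain in the finite poset $\Lambda$.

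The main obstacle is the second preliminary observation — precisely the claim that ``generated by singular vectors'' already forces the singular vectors to contain a minimal generating set, so that the natural surjection $\bigoplus\Del(\nu)\onto X$, with $\nu$ running over the composition factors of $X/\Rad X$, exists. Once that is in place the rest is a formal assembly: exactness and minimality are built into the recursion, the hypothesis is used only to keep the successive kernels $K_i$ inside the class of objects generated by singular vectors so that the induction may continue, and termination is forced by the finiteness of $\Lambda$.
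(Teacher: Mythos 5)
Your proof is correct, and its skeleton coincides with the paper's: both first combine Lemmas \ref{inherit tight} and \ref{tight iff sing} to see that every subobject of a finite sum of standard objects is generated by singular vectors, and both then iterate, covering each successive kernel by a sum of standards. The one genuine difference is how minimality is achieved. The paper takes an arbitrary surjection $\Del_i \onto K_{i-1}$ coming from the definition of ``generated by singular vectors'' and then invokes Lemma \ref{minimal lemma} to cancel matching summands until the image lands in the radical; you instead build a minimal cover directly, choosing homomorphisms $f\colon\Del(\mu_f)\to X$ whose images decompose the semisimple top $X/\Rad X$, so that $\Ker\pi\subseteq\Rad\Del_X$ holds by construction (a projective-cover-style argument transplanted to standard objects). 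Your route costs you the verification that the radical of a subobject sits inside the radical of the ambient object and that a semisimple object generated by simples admits a direct-sum refinement -- both routine in an artinian length category -- but it buys a cleaner iteration: the paper's repeated applications of Lemma \ref{minimal lemma} in principle shrink the previously constructed terms and require a word about why the earlier steps survive, whereas in your recursion each differential has image exactly $K_{i-1}\subseteq\Rad\Del_{i-1}$ from the outset. Your termination argument is the same as the paper's remark preceding Lemma \ref{minimal lemma}. No gaps.
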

\begin{proof}
First suppose that every subobject of every standard object in $C$ is generated by singular vectors. By Lemmas \ref{inherit tight} and \ref{tight iff sing}, every subobject of every sum of standard objects in $C$ is generated by singular vectors. Let $\Del_0$ be a sum of standard modules and let $M=\Del_0/N$ be a quotient of $\Del_0$. By hypothesis $N$ is generated by singular vectors, so it is a quotient of a finite sum $\Del_1$ of standard objects. We have obtained an exact sequence
$$ \Delta_1 \rightarrow \Del_0 \rightarrow M \rightarrow 0,$$ and by lemma \ref{minimal lemma} we may assume that the image of $\Del_1$ is contained in the radical of $\Del_0$. Iterating produces a minimal BGG resolution. 
\end{proof}

The category $C$ has \emph{tight multiplicities} if every standard object in $C$ does. Examples include defect one blocks in Cherednik algebra category $\OO$ (see 5.2.4 of \cite{Rouq})). 
We now restate and prove Theorem \ref{tight 1} using these definitions.

\begin{theorem} \label{tight 2} Assume $C$ is a category as in \ref{axioms}. The following are equivalent:
\begin{enumerate}
\item $C$ has tight multiplicities.
\item Every simple object in $C$ has a BGG resolution.
\item The radical of every standard object in $C$ is generated by singular vectors.
\end{enumerate} 
\end{theorem}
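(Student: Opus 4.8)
The plan is to close the cycle of implications $(1)\Rightarrow(2)\Rightarrow(3)\Rightarrow(1)$; every step is a short deduction from the lemmas already established in this section, so the argument amounts to assembling those pieces in the right order. For $(1)\Rightarrow(2)$: if $C$ has tight multiplicities then in particular every standard object $\Del(\lambda)$ has tight multiplicities, so by Lemma \ref{tight iff sing} every subobject of every $\Del(\lambda)$ is generated by singular vectors. Lemma \ref{sing iff BGG} then produces a (minimal) BGG resolution for every quotient of a finite sum of standard objects; applying this to the surjection $\Del(\lambda)\onto L(\lambda)$ gives a BGG resolution of each simple object.

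For $(2)\Rightarrow(3)$: this is essentially the normalization remark recorded just before the theorem. Suppose $L(\lambda)$ has a BGG resolution $\cdots\to\Del_1\to\Del_0\to L(\lambda)\to 0$. Since $L(\lambda)$ is simple, any standard summand $\Del(\mu)$ of $\Del_0$ admitting a nonzero map to $L(\lambda)$ must satisfy $\mu=\lambda$, and then the map is a nonzero scalar multiple of the quotient map; discarding the other summands of $\Del_0$ and appealing to Lemma \ref{minimal lemma} to keep the sequence exact, we may take the augmentation to be $\Del(\lambda)\onto L(\lambda)$, whose kernel is $\Rad\Del(\lambda)$. Exactness at the $\Del(\lambda)$-term then exhibits $\Rad\Del(\lambda)$ as a homomorphic image of a finite sum of standard objects, i.e.\ as generated by singular vectors; running this over all $\lambda\in\Lambda$ gives $(3)$.

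For $(3)\Rightarrow(1)$: assume the radical of each standard object is generated by singular vectors. Lemma \ref{rad enough} upgrades this to the statement that every subobject of every standard object is generated by singular vectors, and then Lemma \ref{tight iff sing}, applied with $M=\Del(\lambda)$, shows each $\Del(\lambda)$ has tight multiplicities, which is exactly the assertion that $C$ has tight multiplicities. Given the preceding lemmas there is no serious obstacle; the only point needing care is the normalization step in $(2)\Rightarrow(3)$ — verifying that after deleting the superfluous summands of $\Del_0$ the truncated complex is still exact with the expected kernel at the $\Del(\lambda)$-term — and this is precisely what Lemma \ref{minimal lemma} together with the uniqueness up to scalars of the map $\Del(\lambda)\to L(\lambda)$ provides.
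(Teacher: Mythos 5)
Your proposal is correct and follows essentially the same route as the paper: $(1)\Rightarrow(2)$ via Lemmas \ref{tight iff sing} and \ref{sing iff BGG}, $(2)\Rightarrow(3)$ via the normalization of the augmentation $\Del_0\to L(\lambda)$ discussed after the definition of BGG resolutions, and $(3)\Rightarrow(1)$ via Lemmas \ref{rad enough} and \ref{tight iff sing}. The only difference is that you spell out the normalization step in $(2)\Rightarrow(3)$ more carefully by invoking Lemma \ref{minimal lemma}, which is a reasonable elaboration of the paper's remark.
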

\begin{proof} (a) implies (b) follows from Lemma \ref{tight iff sing} and Lemma \ref{sing iff BGG}. (b) implies (c) is the discussion following the definition of BGG resolutions. (c) implies (a) follows from Lemma \ref{rad enough} and Lemma \ref{tight iff sing}. \end{proof} 

For finite dimensional complex semisimple Lie algebras, there is another way of stating this criterion.
\begin{corollary}
Let $C$ be a block of category $\OO$ for a finite dimensional semisimple Lie algebra over $\CC$. Then every simple in $C$ has a BGG resolution if and only if every Verma module is multiplicity free.
\end{corollary}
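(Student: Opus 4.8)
The plan is to reduce the corollary to Theorem \ref{tight 2} together with two classical facts about Verma modules. First I would note that a block $C$ of category $\OO$ for a finite dimensional semisimple Lie algebra is a highest weight category with finitely many simple objects, whose standard objects are the Verma modules $\Del(\lambda)=M(\lambda)$ lying in the block, partially ordered by the restriction to the (finite) index set of the usual order ``$\mu\le\lambda$ iff $\lambda-\mu$ is a nonnegative integer combination of positive roots''. One has $\End M(\lambda)=\CC=\End L(\lambda)$, and by linkage the composition factors of $\Rad M(\lambda)$ have strictly smaller highest weight, so $C$ satisfies the axioms of \ref{axioms}. Hence Theorem \ref{tight 2} applies: ``every simple in $C$ has a BGG resolution'' is equivalent to ``$C$ has tight multiplicities'', i.e.\ to $\dim\Hom(\Del(\mu),\Del(\lambda))=[\Del(\lambda):L(\mu)]$ for all $\lambda,\mu$ indexing $C$.

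The two inputs I would invoke from the classical theory are: (i) $\dim\Hom(M(\mu),M(\lambda))\le 1$ for all weights, and every nonzero such map is injective; and (ii) the Bernstein--Gelfand--Gelfand theorem, namely that $[M(\lambda):L(\mu)]\ne 0$ implies $\Hom(M(\mu),M(\lambda))\ne 0$. Combined with the general inequality $\dim\Hom(\Del(\mu),M)\le[M:L(\mu)]$ established before Lemma \ref{inherit tight}, these let one compare $\dim\Hom(\Del(\mu),\Del(\lambda))$ and $[\Del(\lambda):L(\mu)]$ directly.

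For the forward direction: if every simple in $C$ has a BGG resolution then $C$ has tight multiplicities, so $[\Del(\lambda):L(\mu)]=\dim\Hom(\Del(\mu),\Del(\lambda))\le 1$ by (i), and every Verma in $C$ is multiplicity free. Conversely, assume every Verma in $C$ is multiplicity free and fix $\lambda,\mu$. If $[\Del(\lambda):L(\mu)]=0$ then $\dim\Hom(\Del(\mu),\Del(\lambda))=0$ by the general inequality; if $[\Del(\lambda):L(\mu)]=1$, then (ii) produces a nonzero map $\Del(\mu)\to\Del(\lambda)$ and (i) bounds the Hom space above by $1$, so $\dim\Hom(\Del(\mu),\Del(\lambda))=1$. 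Either way equality holds, $C$ has tight multiplicities, and Theorem \ref{tight 2} produces BGG resolutions for all the simples.

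The only step that is not a formal consequence of Theorem \ref{tight 2} is (ii): upgrading the appearance of $L(\mu)$ as a composition factor of $M(\lambda)$ to an actual embedding $M(\mu)\hookrightarrow M(\lambda)$ is precisely the content of the BGG theorem and is the one genuinely substantial ingredient, although it is available off the shelf. The rest of the argument --- checking that within a single block the order restricts to a partial order on the finite index set, and that the axioms of \ref{axioms} hold --- is routine.
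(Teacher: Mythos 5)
Your proposal is correct and follows essentially the same route as the paper: both reduce to Theorem \ref{tight 2} and then invoke the two classical facts that $\Hom(M(\mu),M(\lambda))$ is at most one-dimensional and is nonzero precisely when $[M(\lambda):L(\mu)]\neq 0$ (the BGG theorem), so that tight multiplicities is equivalent to every Verma being multiplicity free. The only difference is that you spell out the case analysis and the verification of the axioms in \ref{axioms} more explicitly than the paper does.
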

\begin{proof}
It is a consequence of classical Lie theory that $$[\Delta(\lambda):L(\mu)] \neq 0 \ \iff \mathrm{Hom}(\Delta(\mu),\Delta(\lambda)) \neq 0$$ and that in this case $\dim\Hom(\Delta(\mu),\Delta(\lambda))=1$. It follows that $C$ has tight multiplicities if and only if every Verma module is multiplicity free.
\end{proof} We remark that K\"{o}nig \cite{Koe} proves this corollary by a different method; we do not know if the standing hypotheses of his paper hold in the cases of interest to us.

\subsection{BGG resolutions and character formulas}

Suppose $M$ is a module in $C$ that admits a BGG resolution
$$0 \rightarrow \Delta_\ell \rightarrow \cdots \rightarrow \Delta_0 \rightarrow M$$ 

In the Grothendieck group of $C$ this gives
$$[M]=\sum (-1)^i [\Delta_i]=\sum (-1)^i c_{\lambda,i} [\Delta_c(\lambda)],$$ where
$$\Delta_i \cong \bigoplus_{\lambda} \Delta_c(\lambda)^{\oplus c_{\lambda,i}}.$$ 

\subsection{Wreath products} In \cite{ChTa}, Chuang and Tan study filtrations of modules of wreath product algebras and the corresponding graded decomposition numbers. Here we combine their results with ours to show that the conditions of Theorem \ref{tight 2} are inherited by wreath products as well.

Let $A$ be a finite-dimensional $F$-algebra, let $I$ be a finite partially ordered set, and for each $i \in I$ let $\Delta(i)$ be an $A$-module. With $C=A \mathrm{-mod}$ we assume that the conditions from \ref{axioms} hold. Let $n$ be a positive integer and write $$A_n=A^{\otimes n} \rtimes S_n$$ for the wreath product of $A$ with $S_n$. A collection $\lambda=(\lambda^i)_{i \in I}$ of partitions indexed by $i \in I$ with $\sum_{i,j} \lambda^i_j=n$ is an \emph{$I$-partition of $n$}. Given an $I$-partition $\lambda$ of $n$ and a collection of $A$-modules $M(i)$, $i \in I$, we write $A_\lambda=\bigotimes_{i \in I} A_{|\lambda^i|}$ and define
$$M(\lambda)=\mathrm{Ind}_{A_\lambda}^{A_n} \bigotimes_{i \in I} M(i)^{\otimes |\lambda^i|} \otimes S^{\lambda_i}.$$ Given two $I$-partitions $\lambda$ and $\mu$ of $n$, we write $\lambda \leq \mu$ if $\lambda=\mu$ or for each $i \in I$,
$$\sum_{j \geq i} |\lambda^j| \leq \sum_{j \geq i} |\mu^j|,$$ with a strict inequality for some $i \in I$. This defines a partial order on the set of $I$-partitions of $n$.
\begin{lemma}
If $A$-mod, with standard objects $\Delta(i)$, satisfies the conditions from \ref{axioms}, then the category $A_n$-mod satisfies the conditions from \ref{axioms}, with standard objects $\Delta(\lambda)$ indexed by the poset of $I$-partitions $\lambda$ of $n$.
\end{lemma}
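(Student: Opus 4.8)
The plan is to check axioms (a)--(c) of \ref{axioms} for $A_n$-mod with standard objects $\Delta(\lambda)$ (that is, $M(\lambda)$ with each $M(i)=\Delta(i)$). Throughout fix an $I$-partition $\lambda$ of $n$, put $a_i=|\lambda^i|$, and let $S_\lambda=\prod_{i\in I}S_{a_i}\le S_n$, so that $A_\lambda=\bigotimes_i A_{a_i}=A^{\otimes n}\rtimes S_\lambda$. Choose for each $i$ a primitive idempotent $\epsilon_i\in A$ with $A\epsilon_i/\rad(A\epsilon_i)\cong L(i)$; since $\End_A(L(i))=F$ forces $\epsilon_i L(j)=0$ for $j\neq i$, and axiom (c) for $A$ forces $[\Delta(i):L(i)]=1$, we get $\dim_F\epsilon_i\Delta(i)=1$, so $\epsilon_i\Delta(i)\cong\epsilon_i L(i)$. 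Set $e_\lambda=\bigotimes_i\epsilon_i^{\otimes a_i}\in A^{\otimes n}$; it is $S_\lambda$-fixed, so $e_\lambda A_\lambda e_\lambda=\bigotimes_i(\epsilon_i A\epsilon_i)_{a_i}$. The main tools I would use are: Clifford and Mackey theory for the normal subalgebra $A^{\otimes n}\trianglelefteq A_n$ --- the hypothesis $\End_A(L(i))=F$ makes every simple $A^{\otimes n}$-module $\bigotimes_k L(c(k))$ (for a colouring $c\colon\{1,\dots,n\}\to I$) absolutely simple, so Clifford theory carries no cocycle obstruction and preserves the property $\End=F$; exactness of $\Ind_{A_\lambda}^{A_n}$, since $A_n$ is free as a right $A_\lambda$-module; and the standard structure of external tensor products, over $A^{\otimes n}$ and over $A_\lambda=\bigotimes_i A_{a_i}$, together with the ``triangular'' structure of the $\Delta(i)$ furnished by axiom (c) for $A$. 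I would also use, as in the intended characteristic-zero setting, that $FS_n$ is semisimple, so that the Specht modules $S^{\lambda^i}$ exhaust the simple $FS_{a_i}$-modules.

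For axiom (a) and the assertion $\End(L(\lambda))=F$: the $S_n$-orbits of simple $A^{\otimes n}$-modules are parametrised by $a\colon I\to\ZZ_{\ge0}$ with $\sum_i a_i=n$, a representative being $V_a=\bigotimes_i L(i)^{\otimes a_i}$, whose stabiliser $S_a$ acts by permuting tensor factors within each block; Clifford theory then identifies the simple $A_n$-modules with pairs (such an $a$, a simple $FS_a$-module), hence --- using semisimplicity of $FS_n$ --- with $I$-partitions, the simple for $\lambda$ being $L(\lambda)=\Ind_{A_\lambda}^{A_n}\bigl(\bigotimes_i(L(i)^{\otimes a_i}\otimes S^{\lambda^i})\bigr)$, with $\End(L(\lambda))=F$ as it is induced from split data. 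Next I would record that $\bigotimes_i\bigl(\Delta(i)^{\otimes a_i}\otimes S^{\lambda^i}\bigr)$ has simple head $\bigotimes_i\bigl(L(i)^{\otimes a_i}\otimes S^{\lambda^i}\bigr)$: over $A^{\otimes a_i}$ the head of $\Delta(i)^{\otimes a_i}$ is $L(i)^{\otimes a_i}$, which is simple over $A_{a_i}$ (Clifford theory for $A^{\otimes a_i}\trianglelefteq A_{a_i}$); the functor $-\otimes S^{\lambda^i}$ preserves projective covers, hence heads; and the head of an external product is the external product of heads. Applying the exact functor $\Ind_{A_\lambda}^{A_n}$ to the surjection onto this head produces $\Delta(\lambda)\twoheadrightarrow L(\lambda)$, so $L(\lambda)$ occurs in the head of $\Delta(\lambda)$.

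The decisive step is a composition-factor count. By Mackey for $A^{\otimes n}\trianglelefteq A_n$, $\Res^{A_n}_{A^{\otimes n}}\Delta(\lambda)$ is a sum of $S_n$-conjugates of copies of $\bigotimes_i\Delta(i)^{\otimes a_i}$, whose composition factors $\bigotimes_k L(j_k)$ have, for each $i_0\in I$, at most $\sum_{j\ge i_0}|\lambda^j|$ of the labels $j_k$ satisfying $j_k\ge i_0$ (a factor of $\Delta(i)$ other than $L(i)$ has label $<i$). Since $\Res^{A_n}_{A^{\otimes n}}L(\mu)$ is a sum of conjugates of $V_b$ with $b_i=|\mu^i|$, any $L(\mu)$ appearing in $\Delta(\lambda)$ satisfies $\sum_{j\ge i}|\mu^j|\le\sum_{j\ge i}|\lambda^j|$ for all $i$, that is $\mu\le\lambda$. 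When $|\mu^i|=|\lambda^i|$ for every $i$ I would instead apply the exact functor $e_\lambda\Res^{A_n}_{A_\lambda}(-)$: because $e_\lambda$ annihilates every simple $A^{\otimes n}$-module except the one colouring $V_\lambda=\bigotimes_i L(i)^{\otimes a_i}$, which it sends to $F$, it kills all the non-identity Mackey terms, so $e_\lambda\Res^{A_n}_{A_\lambda}\Delta(\lambda)\cong\bigotimes_i\bigl((\epsilon_i\Delta(i))^{\otimes a_i}\otimes S^{\lambda^i}\bigr)\cong\bigotimes_i S^{\lambda^i}$, a \emph{simple} $e_\lambda A_\lambda e_\lambda$-module, while $e_\lambda\Res^{A_n}_{A_\lambda}L(\mu)\cong\bigotimes_i S^{\mu^i}$ for such $\mu$ and $0$ otherwise; comparing multiplicities yields $[\Delta(\lambda):L(\mu)]=\delta_{\lambda\mu}$ in this range, in particular $[\Delta(\lambda):L(\lambda)]=1$. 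Finally, if some $L(\mu)$ with $\mu\neq\lambda$ lay in the head of $\Delta(\lambda)=\Ind_{A_\lambda}^{A_n}M_\lambda$, then by adjunction the simple head $\bigotimes_i(L(i)^{\otimes a_i}\otimes S^{\lambda^i})$ of $M_\lambda$ would be a composition factor of $\Res^{A_n}_{A_\lambda}L(\mu)$ --- impossible on $A^{\otimes n}$-content grounds when $(|\mu^i|)_i\neq(|\lambda^i|)_i$, and excluded by the previous sentence when $(|\mu^i|)_i=(|\lambda^i|)_i$. Hence $\mathrm{head}(\Delta(\lambda))=L(\lambda)$, which gives (a); the kernel of $\Delta(\lambda)\twoheadrightarrow L(\lambda)$ is $\Rad(\Delta(\lambda))$, all of whose composition factors $L(\mu)$ have $\mu<\lambda$ by the count, which is (c); and $\End(\Delta(\lambda))$ is local with residue field $F$ and no nonzero nilpotents (a nonzero nilpotent would have image a nonzero submodule of $\Rad(\Delta(\lambda))$ with head $L(\lambda)$, contradicting $[\Rad(\Delta(\lambda)):L(\lambda)]=0$), so $\End(\Delta(\lambda))=F$, which is (b).

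I expect the main obstacle to be exactly this last multiplicity computation --- disentangling the ``$A$-direction'' from the ``$S_n$-direction''. Restriction to $A^{\otimes n}$ alone is too coarse: all simple $A_n$-modules with a fixed profile $(|\mu^i|)_i$ restrict to the same class in the Grothendieck group of $A^{\otimes n}$-mod up to a scalar, so no counting there can separate them. The idempotent truncation $e_\lambda(-)$ is the device that does, and the one fact that makes it bite is the one-dimensionality $\epsilon_i\Delta(i)\cong\epsilon_i L(i)$ --- a direct consequence of axiom (c) for $A$ --- which collapses $e_\lambda\bigl(\bigotimes_i(\Delta(i)^{\otimes a_i}\otimes S^{\lambda^i})\bigr)$ all the way down to the single simple module $\bigotimes_i S^{\lambda^i}$. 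Everything else is bookkeeping with induction, restriction, and Mackey's formula.
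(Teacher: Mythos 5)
Your argument is correct, but it takes a genuinely different route from the paper's: the paper disposes of this lemma in two lines by citing Propositions 3.7 and 4.7 of Chuang--Tan \cite{ChTa} (for axioms (a),(b) and for (c) respectively), whereas you reprove the needed facts from scratch. Your two devices --- Clifford/Mackey theory for the normal subalgebra $A^{\otimes n}\trianglelefteq A_n$ (legitimate because $\End(L(i))=F$ makes every simple $A^{\otimes n}$-module an absolutely simple outer tensor product, so there is no cocycle obstruction), and the idempotent truncation $e_\lambda(-)$, whose effectiveness rests on $\dim_F\epsilon_i\Delta(i)=1$, itself extracted from axiom (c) for $A$ --- correctly isolate the one point where restriction to $A^{\otimes n}$ cannot see the answer, namely separating the simples $L(\mu)$ with a fixed profile $(|\mu^i|)_{i\in I}$; your computation that $e_\lambda$ kills all non-identity Mackey terms and collapses the identity term to the simple $\bigotimes_i S^{\lambda^i}$ is the essential content, and it is right. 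What the citation buys is brevity; what your proof buys is self-containedness and honesty about hypotheses: you flag that you use semisimplicity of $FS_n$, and some such restriction is in fact forced by the statement itself (for $A=F$, $\mathrm{char}\,F=2$, $n=2$ the two Specht modules of $FS_2$ have isomorphic heads, so axiom (a) fails), a point invisible in the paper's proof but harmless in its characteristic-zero applications. One small tightening: for $\End(\Delta(\lambda))=F$ it suffices to observe that the kernel of the restriction map $\End(\Delta(\lambda))\rightarrow\End(L(\lambda))=F$ consists of endomorphisms whose image would be a nonzero submodule of $\Rad\Delta(\lambda)$ with head $L(\lambda)$, which $[\Rad\Delta(\lambda):L(\lambda)]=0$ forbids; this is your argument, but it applies to every element of the kernel, not only to nilpotents, so no prior claim of locality is needed.
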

\begin{proof}
Proposition 3.7 of \cite{ChTa} implies that axioms (a) and (b) of \ref{axioms} hold for $A_n$, while Proposition 4.7 of \cite{ChTa} implies that (c) holds. 
\end{proof}

\begin{lemma} \label{standard products}
Let $i$ and $j$ be positive integers with $i+j=n$, let $\mu$ be an $I$-partition of $i$ and let $\lambda$ be an $I$-partition of $j$. Then the module $\mathrm{Ind}_{A_i \otimes A_j}^{A_n} \Delta(\mu) \otimes \Delta(\lambda)$ is a direct sum of standard modules for $A_n$.
\end{lemma}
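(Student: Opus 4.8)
The plan is to unwind the definition of the standard modules, collapse all the inductions into a single one using transitivity, and reduce to a computation inside a single wreath product $A_m=A^{\otimes m}\rtimes S_m$ where the Littlewood--Richardson rule does the work.

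First I would expand $\Delta(\mu)=\Ind_{A_\mu}^{A_i}\bigotimes_{k\in I}\Delta(k)^{\otimes|\mu^k|}\otimes S^{\mu^k}$ and $\Delta(\lambda)$ likewise, and use transitivity of induction together with the identity $\Ind(P)\otimes\Ind(Q)\cong\Ind(P\otimes Q)$ for induction along a tensor product of subalgebras to rewrite $\Ind_{A_i\otimes A_j}^{A_n}\Delta(\mu)\otimes\Delta(\lambda)$ as an induction from $A_\mu\otimes A_\lambda=\bigotimes_{k\in I}\bigl(A_{|\mu^k|}\otimes A_{|\lambda^k|}\bigr)$ (after reordering the tensor factors) up to $A_n$. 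Writing $m_k=|\mu^k|+|\lambda^k|$, so that $\sum_k m_k=i+j=n$, the chain of inclusions $A_\mu\otimes A_\lambda\subseteq\bigotimes_k A_{m_k}\subseteq A_n$ lets me factor this induction, and the module becomes $\Ind_{\bigotimes_k A_{m_k}}^{A_n}\bigl(\bigotimes_{k\in I}N_k\bigr)$ with $N_k=\Ind_{A_{|\mu^k|}\otimes A_{|\lambda^k|}}^{A_{m_k}}\bigl(\Delta(k)^{\otimes|\mu^k|}\otimes S^{\mu^k}\bigr)\otimes\bigl(\Delta(k)^{\otimes|\lambda^k|}\otimes S^{\lambda^k}\bigr)$. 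Since by definition $\Ind_{\bigotimes_k A_{m_k}}^{A_n}\bigotimes_k\bigl(\Delta(k)^{\otimes m_k}\otimes S^{\gamma^k}\bigr)=\Delta\bigl((\gamma^k)_{k\in I}\bigr)$, it now suffices to show that each $N_k$ is a direct sum of $A_{m_k}$-modules of the form $\Delta(k)^{\otimes m_k}\otimes S^\gamma$ with $\gamma$ a partition of $m_k$.

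The heart of the argument is then the following \emph{intertwining property}. Fix an $A$-module $V$ and let $\Psi_V\colon F[S_m]\text{-mod}\to A_m\text{-mod}$ send $U$ to $V^{\otimes m}\otimes U$, with $A^{\otimes m}$ acting on $V^{\otimes m}$ and $S_m$ acting diagonally. Then for $a+b=m$, an $F[S_a]$-module $U_1$, and an $F[S_b]$-module $U_2$,
$$\Ind_{A_a\otimes A_b}^{A_{a+b}}\bigl(\Psi_V(U_1)\otimes\Psi_V(U_2)\bigr)\ \cong\ \Psi_V\bigl(\Ind_{S_a\times S_b}^{S_{a+b}}(U_1\otimes U_2)\bigr).$$
I would prove this by a direct computation: writing $A_{a+b}=A^{\otimes(a+b)}\otimes_F F[S_{a+b}]$ and using that $A^{\otimes(a+b)}$ is stable under permutation of its factors, every element of $A_{a+b}\otimes_{A_a\otimes A_b}\bigl(\Psi_V(U_1)\otimes\Psi_V(U_2)\bigr)$ can be moved to the normal form $(1\otimes g)\otimes(v\otimes u)$ with $g\in S_{a+b}$, $v\in V^{\otimes(a+b)}$, $u\in U_1\otimes U_2$; the surviving relations then identify the module with $F[S_{a+b}]\otimes_{F[S_a\times S_b]}\bigl(V^{\otimes(a+b)}\otimes(U_1\otimes U_2)\bigr)$ for the diagonal $S_a\times S_b$-action, and the standard isomorphism $F[G]\otimes_{F[H]}(M\otimes N)\cong M\otimes_F(\Ind_H^G N)$ (valid whenever $M$ carries a $G$-action compatible with its $H$-action) rewrites this as $\Psi_V\bigl(\Ind_{S_a\times S_b}^{S_{a+b}}(U_1\otimes U_2)\bigr)$. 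Applying this with $V=\Delta(k)$, $U_1=S^{\mu^k}$, $U_2=S^{\lambda^k}$, and then decomposing $\Ind_{S_{|\mu^k|}\times S_{|\lambda^k|}}^{S_{m_k}}(S^{\mu^k}\otimes S^{\lambda^k})$ into Specht modules by the Littlewood--Richardson rule (legitimate since $F[S_{m_k}]$ is semisimple in the case $F=\CC$ of interest) identifies $N_k$ with $\bigoplus_\gamma\bigl(\Delta(k)^{\otimes m_k}\otimes S^\gamma\bigr)^{\oplus c^\gamma_{\mu^k\lambda^k}}$, exactly as needed; reassembling over $k\in I$ then exhibits $\Ind_{A_i\otimes A_j}^{A_n}\Delta(\mu)\otimes\Delta(\lambda)$ as $\bigoplus_{(\gamma^k)_{k\in I}}\bigl(\prod_k c^{\gamma^k}_{\mu^k\lambda^k}\bigr)\Delta\bigl((\gamma^k)_{k\in I}\bigr)$.

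I expect the main obstacle to be the bookkeeping in the intertwining step: one must track the twisting in the semidirect product $A^{\otimes(a+b)}\rtimes S_{a+b}$ and verify that the proposed isomorphism is well defined on the relative tensor product over $A_a\otimes A_b$. (Alternatively, this intertwining property is essentially contained in Chuang--Tan's study of $\Delta$-filtrations of wreath product modules \cite{ChTa} and could be cited from there, at the cost of a less self-contained treatment.)
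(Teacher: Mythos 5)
Your argument is correct and takes essentially the same route as the paper: the paper's entire proof is the remark that the claim follows from the definition of standard modules together with part (1) of Lemma 3.3 of Chuang--Tan, which is precisely the intertwining identity $\Ind_{A_a\otimes A_b}^{A_{a+b}}(\Psi_V(U_1)\otimes\Psi_V(U_2))\cong\Psi_V(\Ind_{S_a\times S_b}^{S_{a+b}}(U_1\otimes U_2))$ that you prove by hand before applying Littlewood--Richardson. Your write-up is just a self-contained expansion of that citation, and your caveat that semisimplicity of $F[S_{m_k}]$ is what upgrades the Specht filtration to an actual direct sum is a fair point the paper leaves implicit (harmless here since $F=\CC$).
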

\begin{proof}
This follows from the definition of standard module together with part (1) of Lemma 3.3 of \cite{ChTa}.
\end{proof}

\begin{corollary}\label{wreath}
If $A$ is a finite dimensional $F$-algebra, $I$ is a finite poset, and $\Delta(i)$, for $i \in I$, are $A$-modules so that the conditions of \ref{axioms} and the conclusions of Theorem \ref{tight 2} hold, then they hold for $A_n$ with standard modules $\Delta(\lambda)$, where $\lambda$ ranges over all $I$-partitions of $n$.
\end{corollary}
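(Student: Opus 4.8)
Since $A_n$-mod satisfies the axioms of \ref{axioms} with the objects $\Delta(\lambda)$ as standard objects (the Lemma preceding Lemma \ref{standard products}), Theorem \ref{tight 2} reduces the corollary to a single assertion: the radical of every standard object $\Delta(\lambda)$ of $A_n$-mod is generated by singular vectors. The plan is to prove this by induction on $n$, the case $n=1$ being the hypothesis on $A$ itself (since $A_1=A$). Three tools recur: exactness of the functor $\Ind_{A_a\otimes A_b}^{A_n}$ (the algebra $A_n$ is free as a module over the subalgebra $A_a\otimes A_b$); Lemma \ref{standard products}, which sends induced external tensor products of standard objects to direct sums of standard objects; and the representation theory of symmetric groups over $F$ (Clifford theory, the branching rule, the projection formula) which underlies Chuang--Tan's framework \cite{ChTa}.

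For the inductive step I would split on the support of $\lambda$. \textbf{If $\lambda$ is supported on at least two elements of $I$}, pick one of them, say $i_0$, and write $\lambda$ as the concatenation of its $i_0$-component $\mu$ --- an $I$-partition of some $a$ with $0<a<n$ --- with the remaining components $\nu$, an $I$-partition of $b=n-a$. Transitivity of induction gives $\Delta(\lambda)=\Ind_{A_a\otimes A_b}^{A_n}(\Delta(\mu)\otimes\Delta(\nu))$, and this is a \emph{single} standard object --- not just a direct sum of standards --- precisely because $\mu$ and $\nu$ have disjoint support. Over a field one has $\Rad(\Delta(\mu)\otimes\Delta(\nu))=\Rad\Delta(\mu)\otimes\Delta(\nu)+\Delta(\mu)\otimes\Rad\Delta(\nu)$, which by the inductive hypothesis applied to $A_a$ and $A_b$ is a quotient of a direct sum of standards $\Delta(\mu')\otimes\Delta(\nu)$ and $\Delta(\mu)\otimes\Delta(\nu')$; applying the exact functor $\Ind_{A_a\otimes A_b}^{A_n}$ and then Lemma \ref{standard products} shows $\Ind_{A_a\otimes A_b}^{A_n}\Rad(\Delta(\mu)\otimes\Delta(\nu))$ is a quotient of a direct sum of $A_n$-standards. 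The point to check is that this module equals $\Rad\Delta(\lambda)$, equivalently that $\Ind_{A_a\otimes A_b}^{A_n}(L(\mu)\otimes L(\nu))$ is simple: it is semisimple by Clifford theory over $F$ and it has simple head, being a nonzero quotient of $\Delta(\lambda)$, so it must equal $L(\lambda)$.

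\textbf{If $\lambda$ is supported on a single $i\in I$}, then $\Delta(\lambda)=\Delta(i)^{\otimes n}\otimes S^\alpha$ with $\alpha=\lambda^i\vdash n$; this case does not descend to a smaller wreath product and I would treat it directly. Clifford theory first shows that the radical of $\Delta(i)^{\otimes n}\otimes S^\alpha$ over $A_n$ coincides with its radical over $A^{\otimes n}$, namely $\bigl(\sum_j\Delta(i)^{\otimes(j-1)}\otimes\Rad\Delta(i)\otimes\Delta(i)^{\otimes(n-j)}\bigr)\otimes S^\alpha$. Using the hypothesis on $A$, fix a surjection $P\twoheadrightarrow\Rad\Delta(i)$ with $P$ a direct sum of $A$-standards. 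The map $P\otimes\Delta(i)^{\otimes(n-1)}\to\Res_{A_1\otimes A_{n-1}}^{A_n}\Delta(i)^{\otimes n}$ (first surject onto $\Rad\Delta(i)$ in the initial tensor slot, then include) is adjoint to an $A_n$-map out of $\Ind_{A_1\otimes A_{n-1}}^{A_n}(P\otimes\Delta(i)^{\otimes(n-1)})$ whose image, obtained by letting $S_n$ symmetrize $\Rad\Delta(i)$ over the $n$ tensor slots, is exactly $\Rad(\Delta(i)^{\otimes n})$. Tensoring with $S^\alpha$ (an exact operation) and rewriting the source via the projection formula and the branching rule $\Res_{S_1\times S_{n-1}}^{S_n}S^\alpha=\bigoplus_{\alpha^-}S^{\alpha^-}$ presents it as a direct sum of modules $\Ind_{A_1\otimes A_{n-1}}^{A_n}\bigl(\Delta(j)\otimes(\Delta(i)^{\otimes(n-1)}\otimes S^{\alpha^-})\bigr)$, where $\Delta(j)$ is an $A$-standard and $\Delta(i)^{\otimes(n-1)}\otimes S^{\alpha^-}$ is an $A_{n-1}$-standard; by Lemma \ref{standard products} this is a direct sum of $A_n$-standards surjecting onto $\Rad\Delta(\lambda)$.

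This completes the induction, and Theorem \ref{tight 2} then gives the corollary. I expect the real work to sit in the single-support case: it cannot be reduced to a smaller wreath product, so it needs the ad hoc ``remove one box'' construction above, and --- more pervasively --- the entire argument rests on properties of the symmetric groups over $F$ (semisimplicity of the relevant group algebras, Clifford theory, branching) that must genuinely be available, as they are in the cases of interest, e.g.\ over $\CC$. A possibly cleaner alternative would be to verify condition (a) of Theorem \ref{tight 2} directly, feeding Chuang--Tan's product formulas for (graded) decomposition numbers, together with the tautological tightness of the symmetric groups in characteristic zero, into a comparison of $\dim\Hom(\Delta(\mu),\Delta(\lambda))$ with $[\Delta(\lambda):L(\mu)]$; either way the homological content is formal once these combinatorial inputs are in hand.
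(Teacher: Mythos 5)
Your proof is correct and follows essentially the same route as the paper's: reduce via Theorem \ref{tight 2} to showing each $\Rad\Delta(\lambda)$ is a quotient of a sum of standards, handle the single-support case by surjecting from $\mathrm{Ind}_{A_1\otimes A_{n-1}}^{A_n}(M\otimes\Delta(i)^{\otimes(n-1)}\otimes S^\alpha)$ using Lemma \ref{standard products}, and reduce the general case to that one. The only cosmetic difference is that you run an induction on $n$ with a two-block splitting (checking that $\Ind(L(\mu)\otimes L(\nu))$ is simple), where the paper instead quotes parts (3) and (4) of Chuang--Tan's Lemma 3.5 to write $\Rad\Delta(\lambda)$ directly as the induction of $\sum_i\bigotimes_{j\neq i}\Delta(\lambda_j)\otimes\Rad\Delta(\lambda_i)$ from $A_\lambda$.
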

\begin{proof}
It suffices to prove that the radical of each $\Delta(\lambda)$ is a homomorphic image of a sum of $\Delta(\mu)$'s. We first treat the special case $\lambda^i=\emptyset$ for all but one $i \in I$. In this case $\Delta(\lambda)=\Delta(i)^{\otimes n} \otimes S^\lambda$, where $\lambda=\lambda^i$ is the only non-empty component. Then by \cite{ChTa} Lemma 3.5 part (3),
$$\mathrm{rad}(\Delta(\lambda))=\sum_{j=1}^n \Delta(i)^{\otimes j-1} \otimes \mathrm{Rad} \Delta(i) \otimes \Delta(i)^{\otimes n-j} \otimes S^\lambda.$$ Now if the radical of $\Delta(i)$ is a quotient of a direct sum of standard modules $M$, then there is a surjection of $A_{n-1} \otimes A_1$-modules
$$\Delta(i)^{n-1} \otimes M \otimes S^\lambda \rightarrow \Delta(i)^{n-1} \otimes \mathrm{Rad} \Delta(i) \otimes S^\lambda$$ which upon inducing and using the fact that as a $A_n$-module, $\mathrm{Rad}(\Delta(\lambda))$ is generated by the target, gives a surjection of $A_n$ modules
$$\mathrm{Ind}_{A_{n-1} \otimes A_1}^{A_n} \Delta(i)^{n-1} \otimes M \otimes S^\lambda \rightarrow \mathrm{Rad} \Delta(\lambda).$$ The domain here is a sum of standard modules by Lemma \ref{standard products}, proving this special case.

In general by using parts (3) and (4) of Lemma 3.5 from \cite{ChTa} we have
$$\mathrm{Rad}(\Delta(\lambda))=\mathrm{Ind}_{A_\lambda}^{A_n} \mathrm{Rad}( \bigotimes_{i \in I} \Delta(\lambda_i))=\mathrm{Ind}_{A_\lambda}^{A_n} (\sum_{i \in I} \bigotimes_{j \neq i} \Delta(\lambda_j) \otimes \mathrm{Rad}(\Delta(\lambda_i))).$$ Combining this equation with the special case already proved and Lemma \ref{standard products} finishes the proof.
\end{proof}

In the category $\OO$ for the Cherednik algebra of a symmetric group, there are certain combinatorially defined blocks called RoCK blocks which have the structure of a wreath product (see \cite{Tu1}, Definition 52).
\begin{corollary}
If $B_{w,\rho}\subset\OO_c(S_n)$ is a RoCK block of weight $w$ and $e$-core $\rho$ then $B_{w,\rho}$ has tight multiplicities.
\end{corollary}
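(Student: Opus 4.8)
The plan is to realize the RoCK block as a wreath product and then appeal to Corollary \ref{wreath}, using that defect one (equivalently, weight one) blocks of Cherednik category $\OO$ have tight multiplicities. Let $A$ denote the basic algebra of a weight one block of $\OO_c(S_m)$ for the appropriate $m$, so that $A\mathrm{-mod}$ is a highest weight category whose poset $I$ of simples carries the usual highest weight order. Being a defect one block, $A\mathrm{-mod}$ has tight multiplicities by the remark following Lemma \ref{sing iff BGG} (see 5.2.4 of \cite{Rouq}). I would then invoke the structure theory of RoCK blocks: by the defining property of a RoCK block (\cite{Tu1}, Definition 52) together with the theorem identifying such blocks with wreath products, there is an equivalence of highest weight categories $B_{w,\rho} \simeq A_w\mathrm{-mod}$, where $A_w = A^{\otimes w} \rtimes S_w$, under which the standard objects of $B_{w,\rho}$ correspond to the standard objects $\Delta(\lambda)$ of \cite{ChTa} indexed by the $I$-partitions $\lambda$ of $w$. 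I would record this as a lemma with the relevant citations.

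Granting this, the proof is short. By Theorem \ref{tight 2}, the fact that $A\mathrm{-mod}$ has tight multiplicities says exactly that $A$, together with its standard modules $\Delta(i)$, $i \in I$, satisfies the conditions of \ref{axioms} and the conclusions of Theorem \ref{tight 2}. Corollary \ref{wreath}, applied with this $A$ and with $n = w$, then shows that $A_w$, with standard modules the $\Delta(\lambda)$, again satisfies the conclusions of Theorem \ref{tight 2}; in particular $A_w\mathrm{-mod}$ has tight multiplicities. Transporting this property along the highest weight equivalence $B_{w,\rho} \simeq A_w\mathrm{-mod}$ gives that $B_{w,\rho}$ has tight multiplicities, as desired.

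The step I expect to be the main obstacle is the first one, namely pinning down the equivalence between a RoCK block and its associated wreath product as a genuine equivalence of \emph{highest weight} categories, one that carries standard objects to standard objects and matches the order on the $I$-partitions of $w$ with the highest weight order on $B_{w,\rho}$. Tight multiplicities is a statement about the interaction of the standard objects with composition multiplicities, so it is not a priori transported by an equivalence (Morita or derived) that does not respect the highest weight structure; locating a reference that provides the equivalence in this strong form, and checking that its combinatorics matches the $I$-partition poset of \cite{ChTa}, is where the real work lies. Everything after that is a direct application of Corollary \ref{wreath} and the defect one case.
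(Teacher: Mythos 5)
Your proposal is correct and follows essentially the same route as the paper: the paper also combines the Miyachi--Turner identification $B_{w,\rho}\simeq B^e_{1,\emp}\wr S_w$ (obtained via the highest-weight equivalences with $q$-Schur algebras and unipotent blocks of $GL(n,q)$), the fact from \cite{Rouq} that weight one blocks have tight multiplicities, and Corollary \ref{wreath}. The concern you flag about the equivalence respecting the highest weight structure is precisely what the paper's chain of cited equivalences is meant to address.
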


\begin{proof}
$\OO_c(S_n)$ is equivalent as a highest weight category to $\mathcal{S}_q(n)-\mathrm{mod}$, the category of finite-dimensional modules over the $q$-Schur algebra \cite{Rouq}. Here $c=1/e$ with $2\leq e\leq n$, and $q$ is a primitive $e$'th root of $1$. Also, $\mathcal{S}_q(n)-\mathrm{mod}$ is highest-weight equivalent to the unipotent block of $GL(n,q)$ over a field of characteristic $\ell$ where $\ell>>0$ has order $e$ mod $q$ \cite{Ta}. Combining these facts with theorems proved by Miyachi and Turner, we have:
\begin{theorem}[\cite{Tu2},\cite{Mi1}]\label{Miyachi}
If $B_{w,\rho}$ is a RoCK block in $\OO_{1/e}(S_n)$ of weight $w$ and $e$-core $\rho$ then $B_{w,\rho}\simeq B^e_{1,\emp}\wr S_w$.
\end{theorem}
$B^e_{1,\emp}$ denotes a weight $1$ block, equivalent to the principal block of $\OO_{1/e}(S_e)$. It follows from the analysis in \cite{Rouq} that any weight $1$ block has tight multiplicities. Now the conclusion follows from Corollary \ref{wreath}.
\end{proof}

\begin{remark}
Call a block $B$ \textit{multiplicity-free} if $[\Del(\lambda):L(\mu)]\in\{0,1\}$ for all $\lambda,\mu\in B$. A block with tight multiplicities need not be multiplicity-free. The RoCK blocks provide an example of this: \cite{LeMi} and \cite{ChTa} discovered a formula for decomposition numbers for RoCK blocks in terms of Littlewood-Richardson coefficients, and any Littlewood-Richardson coefficient appears as a decomposition number in some RocK block, so RoCK blocks in general are not multiplicity-free.
\end{remark}

\begin{question} Does the Scopes equivalence class of RoCK blocks of weight $w$ exhaust all blocks of weight $w$ in $\bigoplus_n\OO_c(S_n)$ which have tight multiplicities? In other words, for type $A$ Cherednik algebras, is every block that has tight multiplicities highest weight equivalent to a RoCK block?
\end{question}

\begin{example} Outside type $A$, the principal block $B^0_{1/4}(E_6)$ of $\OO_{1/4}(E_6)$ is an example of a wreath product block which has tight multiplicities: $B^0_{1/4}(E_6)\simeq B^0_{1/4}(D_4)\wr S_3$ (where $B^0_{1/4}(D_4)$ denotes the principal block of $\OO_{1/4}(D_4)$) by results of Miyachi \cite{Mi2}. As mentioned in the introduction, for $D_4$ one should take $c_0=1/4$ and $d_0=d_1=0$, in which case the labels are $(0,0)$ in the first box, and $(1,-1)$ in the box with content $2$. Then $B^0_{1/4}(D_4)$ is $\ttt$-diagonalizable by Lemma \ref{diag} and has tight multiplicities by Theorem \ref{tight thm}. Therefore by Corollary \ref{wreath}, $B^0_{1/4}(E_6)$ has tight multiplicities.
\end{example}

\subsection{BGG algebras and the $\Ext^1$ quiver} For the rest of this section, let $C$ be a highest weight category with finite poset $\Lambda$ indexing the simple and standard objects.

We always have that $\dim\Ext^1(L(\lambda),L(\mu))$ is equal to the number of times $L(\mu)$ appears in the first radical layer of $P(\lambda)$, defined as $\rad_1 P(\lambda)=\Rad P(\lambda)/\Rad(\Rad P(\lambda))$. Suppose there is a contravariant duality functor $\delta:C\rightarrow C$, $\delta^2\simeq\mathrm{Id}$, such that $\delta(L)=L$ for any simple $L\in C$. Then we say that the highest weight category $C$ is \textit{BGG} \cite{Ir}. In this case, $\dim\Ext^1(L(\mu),L(\lambda))=\dim\Ext^1(L(\lambda),L(\mu))$ for any $\lambda,\mu\in\Lambda$, as can be seen by taking a nonsplit short exact sequence $0\rightarrow L(\mu)\rightarrow E \rightarrow L(\lambda) \rightarrow 0$ and applying $\delta$ to it. 

The $\Ext^1$ quiver of $C$ is defined to be the quiver with vertices $\lambda\in\Lambda$ and $\dim\Ext^1(L(\lambda),L(\mu))=:a_{\lambda,\mu}$ arrows $\lambda\rightarrow\mu$. Let us denote the $\Ext^1$ quiver for $C$ as $Q_{\Ext^1}(C)$. From the remarks above, if $C$ is BGG then $Q_{\Ext^1}(C)$ is a double quiver, that is, $a_{\lambda,\mu}=a_{\mu,\lambda}$.

\subsection{Primitive homomorphisms between standard modules}
Let $\lambda,\mu\in\Lambda.$ Call a homomorphism $\phi:\Del(\mu)\rightarrow\Del(\lambda)$ \textit{primitive} if it does not factor through any highest weight submodule $M$ of $\Del(\lambda)$ with highest weight $\nu$, $\mu<\nu<\lambda$. Call a homomorphism $\phi:\Del(\mu)\rightarrow\Del(\lambda)$ \textit{imprimitive} if there exists a highest weight submodule $M$ of $\Del(\lambda)$ of highest weight $\nu$, $\mu<\nu<\lambda$, such that a singular vector $v_\mu$ generating the image of $\phi$ belongs to $M$. Define $\Hom_{\mathrm{imprim}}(\Del(\mu),\Del(\lambda)$ to be the subspace of $\Hom(\Del(\mu),\Del(\lambda))$ generated by the imprimitive homomorphisms from $\Del(\mu)$ to $\Del(\lambda)$. Now define $\Hom_{\mathrm{prim}}(\Del(\mu),\Del(\lambda))$ to be the quotient of $\Hom(\Del(\mu),\Del(\lambda))$ by $\Hom_{\mathrm{imprim}}(\Del(\mu),\Del(\lambda))$. Note that if $\dim\Hom(\Del(\mu),\Del(\lambda))=1$ then $\Hom(\Del(\mu),\Del(\lambda))$ either consists entirely of primitive homomorphisms or imprimitive homomorphisms. If $\dim\Hom(\Del(\mu),\Del(\lambda))\in\{0,1\}$ for all $\mu,\lambda\in\Lambda$, then $\Hom_{\mathrm{prim}}(\Del(\mu),\Del(\lambda))$ or $\Hom_{\mathrm{imprim}}(\Del(\mu),\Del(\lambda))$ is either $0$ or simply the Hom space between the standards, any Hom space being the one or the other.

We may define another quiver $Q_{\mathrm{prim}}(B)$ for the block $B$: it has vertices $\lambda\in\Lambda$ and $b_{\mu,\lambda}$ arrows $\mu\rightarrow\lambda$ if $\dim\Hom_{\mathrm{prim}}(\Del(\mu),\Del(\lambda))=b_{\mu,\lambda}$.

\section{The Cherednik algebra}

Let $V$ be a finite dimensional $\CC$-vector space and $W \subseteq \mathrm{GL}(V)$ a finite subgroup of the group of linear transformations of $V$. A \emph{reflection} is an element $r \in W$ such that the codimension of the fix space of $r$ in $V$ is $1$. Let $R$ be the set of reflections in $W$, and for each $r \in R$ let $c_r \in \CC$ be a number such that $c_r=c_{w r w^{-1}}$ for all $r \in R$ and $w \in W$. We also fix a linear form $\alpha_r \in V^*$ with $$\mathrm{fix}(r)=\{\alpha_r=0 \}.$$ Nothing will depend upon our choice of $\alpha_r$, but the choice of $c_r$ is very important. We will write $c=(c_r)_{r \in R}$ for the collection of $c_r$'s.

Given a vector $y \in V$, the corresponding \emph{Dunkl operator} on $\CC[V]$ is given by the formula
$$y(f)=\partial_y(f)-\sum_{r \in R} c_r \la \alpha_r,y \ra \frac{f-r(f)}{\alpha_r} \quad \hbox{for $f \in \CC[V]$.}$$ The \emph{rational Cherednik algebra} is the subalgebra $H_c=H_c(W,V)$ of $\mathrm{End}_\CC(\CC[V])$ generated by $W$, $\CC[V]$, and the Dunkl operators $y$ for all $y \in V$.

\subsection{Category $\OO_c$} Category $\OO_c$ is the category of finitely generated $H_c$-modules on which each Dunkl operator $y \in V$ acts locally nilpotently. The objects of $\OO_c$ that can be constructed most directly are certain induced modules. Specifically, given a $\CC W$-module $U$ we inflate it to a $\CC[V^*] \rtimes W$-module by letting each $y \in V$ act by $0$. Then we put
$$\Delta_c(U)=\mathrm{Ind}_{\CC[V^*] \rtimes W}^{H_c} U.$$ If $\Lambda$ is an index set for the irreducible $\CC W$-modules, $\lambda \in \Lambda$, and $S^\lambda$ is the corresponding irreducible, then we will also write
$$\Delta_c(\lambda)=\Delta_c(S^\lambda).$$ These $\Delta_c(\lambda)$ are the \emph{standard modules} for $H_c$. Each carries a contravariant form $\la \cdot,\cdot \ra_c$ whose radical is the radical of the module $\Delta_c(\lambda)$. The quotient module
$$L_c(\lambda)=\Delta_c(\lambda)/\mathrm{Rad}(\la \cdot,\cdot \ra_c)$$ is irreducible, and as $\lambda$ runs over $\Lambda$ these give a complete set of non-isomorphic irreducible objects of $\OO_c$. 

\subsection{Partial orders on $\Irr W$}\label{partial}
There is a partial order $<_c$ on $\Irr W$ called the $c$-order with respect to which $\OO_c(W)$ is a highest weight category \cite{GGOR}. This partial order is defined using the action of the Euler element $h$, see section 3.1 of \cite{GGOR}: $h$ acts by a scalar $h_c(\lambda)$ on the highest weight $\lambda$ of any standard module $\Del(\lambda)$ in $\OO_c(W)$. We say that $\lambda>_c\mu$ if $h_c(\mu)-h_c(\lambda)\in\ZZ_{\geq 0}$. For the groups of type $G(r,1,n)$, we may define $h_c(\lambda)$ in terms of charged contents. Let $\tilde{c}(b)=rc_0\ct(b)+d_{\beta(b)}$ (see Section \ref{cyclostuff}) and $\hat{c}(\lambda)=\sum_{b\in\lambda}\tilde{c}(b)$. Then $$h_c(\lambda)=\frac{\dim\hh}{2}-\hat{c}(\lambda)$$
On the other hand, there is the natural partial order $<$ on $\Irr W$ given by the transitive closure of the relation: $\mu<\lambda$ if there is a nonzero homomorphism $\Del(\mu)\rightarrow\Del(\lambda)$. 

Note that Cherednik category $\OO_c(W)$ always satisfies BGG reciprocity \cite{GGOR}:
\begin{equation}\label{BGG}
[P(\lambda):\Del(\mu)]=[\Del(\mu):L(\lambda)]
\end{equation}
It is a consequence of BGG reciprocity that $\OO_c(W)$ has a unique coarsest partial order. This minimal partial order is given by $<$:

\begin{lemma}\label{ggjlpartial}
(\cite{GGJL}, Lemma 4.5) If $\prec$ coarsens the $c$-order $<_c$ and $\prec$ gives a highest weight structure on $\OO_c(W)$, then $\prec$ refines $<$.
\end{lemma}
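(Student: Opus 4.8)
The plan is to reduce the statement to a single generating relation of $<$ and then to show that the standard objects of the hypothetical $\prec$-highest-weight structure coincide with the usual standard modules $\Delta_c(\lambda)$. Since $\prec$ is transitive, it suffices to prove: if $\Hom(\Delta_c(\mu),\Delta_c(\lambda))\neq 0$ and $\mu\neq\lambda$, then $\mu\prec\lambda$. The image of such a homomorphism is a quotient of $\Delta_c(\mu)$, so it has simple head $L(\mu)$; since $L(\mu)\not\cong L(\lambda)$, this image is a proper submodule of $\Delta_c(\lambda)$, and hence $L(\mu)$ occurs as a composition factor of $\Rad\Delta_c(\lambda)$. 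If we knew that $\Delta_c(\lambda)$ is the standard object $\Delta_\prec(\lambda)$ attached to $\lambda$ in the $\prec$-structure, then axiom (c) of \ref{axioms} applied to that structure would give $\mu\prec\lambda$. So the content of the proof lies in the claim that $\Delta_\prec(\lambda)\cong\Delta_c(\lambda)$ for every $\lambda$, and this is where the hypotheses that $\prec$ coarsens $<_c$ and gives a highest weight structure are used.

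First I would construct a surjection $\Delta_c(\lambda)\twoheadrightarrow\Delta_\prec(\lambda)$. In any highest weight category the standard object $\Delta_\prec(\lambda)$ is the largest quotient of the projective cover $P(\lambda)$ all of whose composition factors $L(\nu)$ satisfy $\nu\preceq\lambda$: the kernel of $P(\lambda)\twoheadrightarrow\Delta_\prec(\lambda)$ is filtered by standards $\Delta_\prec(\nu)$ with $\nu\succ\lambda$, and any nonzero quotient of such a module has some $L(\nu)$ with $\nu\succ\lambda$ (hence $\nu\not\preceq\lambda$) in its head, so such a kernel must die in any quotient of $P(\lambda)$ whose composition factors are all $\preceq\lambda$, while $\Delta_\prec(\lambda)$ itself is such a quotient. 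Now all composition factors $L(\nu)$ of $\Delta_\prec(\lambda)$ satisfy $\nu\preceq\lambda$, hence $\nu\leq_c\lambda$ because $\prec$ coarsens $<_c$; applying the same universal property to the $<_c$-structure produces the desired surjection $\Delta_c(\lambda)\twoheadrightarrow\Delta_\prec(\lambda)$.

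Showing this surjection is an isomorphism is the step I expect to be the main obstacle, since it forces a quantitative comparison of the two highest weight structures. Here I would use BGG reciprocity, $[P(\lambda):\Delta(\mu)]=[\Delta(\mu):L(\lambda)]$, which holds for every highest weight structure on $\OO_c(W)$ because the contravariant duality on $\OO_c(W)$ that fixes the simple objects (and underlies \eqref{BGG}) is shared by all of them. It gives
$$[P(\lambda)]=\sum_\mu[\Delta_c(\mu):L(\lambda)]\,[\Delta_c(\mu)]=\sum_\mu[\Delta_\prec(\mu):L(\lambda)]\,[\Delta_\prec(\mu)]$$
for every $\lambda$ in the Grothendieck group of $\OO_c(W)$. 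Writing $e_\mu=[\Delta_c(\mu)]-[\Delta_\prec(\mu)]$, which is effective since $\Delta_\prec(\mu)$ is a quotient of $\Delta_c(\mu)$, and $\epsilon_{\mu\lambda}=[\Delta_c(\mu):L(\lambda)]-[\Delta_\prec(\mu):L(\lambda)]\geq 0$, substituting into the left-hand sum and cancelling the common term $\sum_\mu[\Delta_\prec(\mu):L(\lambda)]\,[\Delta_\prec(\mu)]$ yields
$$\sum_\mu[\Delta_\prec(\mu):L(\lambda)]\,e_\mu+\sum_\mu\epsilon_{\mu\lambda}\,[\Delta_\prec(\mu)]+\sum_\mu\epsilon_{\mu\lambda}\,e_\mu=0.$$
Each of the three summands is a nonnegative integer combination of the classes $[L(\nu)]$, so each vanishes; isolating the $\mu=\lambda$ term in the first sum and using $[\Delta_\prec(\lambda):L(\lambda)]=1$ forces $e_\lambda=0$. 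Thus $\Delta_c(\lambda)$ and $\Delta_\prec(\lambda)$ have the same composition factors, so the surjection above, being a surjection between modules of equal finite length, is an isomorphism. Combined with the reduction of the first paragraph, this shows $\mu<\lambda\Rightarrow\mu\prec\lambda$, that is, $\prec$ refines $<$.
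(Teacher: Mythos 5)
The paper does not prove this lemma itself --- it is quoted from \cite{GGJL} --- but the surrounding text explicitly attributes it to BGG reciprocity, and your argument is exactly that: reduce to a single Hom, identify $\Delta_\prec(\lambda)$ with $\Delta_c(\lambda)$ by comparing the two standard filtrations of $P(\lambda)$ via reciprocity and positivity in the Grothendieck group, then invoke axiom (c) for the $\prec$-structure. The argument is correct (the only point worth making explicit is that the symmetric form of reciprocity for the $\prec$-structure uses $\delta(\Delta_\prec(\mu))\cong\nabla_\prec(\mu)$, which follows from the simple-preserving duality the paper constructs for real $c$), and it matches the approach the paper indicates.
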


\subsection{BGG properties of $\OO_c(W)$} 

Cherednik Category $\OO_c(W)$ for $W$ a real reflection group has a duality functor $\delta$ fixing simples, as described in \cite{GGOR}, Proposition 4.7 and Remark 4.9. Thus $\OO_c(W)$ is BGG when $W$ is real. Furthermore, if the parameter $c$ is real and $W$ is a complex reflection group, then $\OO_c(W)$ also has a duality functor, defined as follows. Fix a $W$-equivariant complex-antilinear isomorphism $\phi:\hh\rightarrow\hh^*$. If $c$ is real, then $\phi$ induces a complex-antilinear ring anti-involution $\bar{}:H_c(W)\rightarrow H_c(W)$ such that $\bar{x}=\phi^{-1}(x)$, $\bar{y}=\phi(y)$, and $\bar{w}=w^{-1}$ for all $x\in\hh^*$, $y\in\hh$, $w\in W$. Given a module $M$ in $\OO_c(W)$, let $DM$ be its graded complex-antilinear dual, with action given by $(hf)(m)=f(\bar{h}m)$ for $h\in H_c(W)$, $f\in DM$, $m\in M$. This gives a complex-antilinear additive exact functor $\delta:\OO_c(W)\rightarrow\OO_c(W)$, $\delta(M)=DM$. Then $\delta$ fixes simples. Thus $\OO_c(W)$ is also BGG so long as $c$ is real, which is the case in all examples considered in this paper.

\section{Orthogonal functions and tableau techniques for Cherednik algebra representation theory}\label{cyclostuff}
\subsection{The group $G(r,p,n)$.} Let $n$ and $r$ be positive integers. The group $G(r,1,n)$ consists of all $n$ by $n$ matrices with exactly one non-zero entry in each row and column, and such that the non-zero entries are all $r$'th roots of $1$. It acts on $V=\CC^n$ as a reflection group. Write $s_{ij}$ for the matrix interchanging the $i$'th and $j$'th coordinates and leaving the remaining coordinates fixed, let $\zeta=e^{2 \pi i /r}$ be a fixed primitive $r$'th root of $1$, and write $\zeta_i$ for the diagonal matrix with $1$'s on the diagonal except in position $i$ and with $i$'th diagonal entry equal to $\zeta$. 

Let $p$ be a positive integer dividing $r$. The group $G(r,p,n)$ is the subgroup of $G(r,1,n)$ consisting of matrices so that the product of the non-zero entries is an $r/p$'th root of $1$. The set of reflections in $G(r,p,n)$ is
$$R=\{\zeta_i^\ell s_{ij} \zeta_i^{-\ell} \ | \ 1 \leq i< j \leq n, \ 0 \leq \ell \leq r-1 \} \cup \{\zeta_i^{p\ell} \ | \ 1 \leq i \leq n, \ 1 \leq \ell \leq r/p-1 \}.$$

\subsection{The rational Cherednik algebra for $G(r,p,n)$.} We put $W=G(r,1,n)$. It is convenient to reparametrize $c$ in this case. We do so following the conventions in \cite{Gri}, (4.2) and (4.3). This parametrization may be specified by giving the defining relations for $H_c=H_c(W,V)$: it is the quotient of the algebra $T(V \oplus V^*) \rtimes W$ by the relations
$$[x_i,x_j]=0=[y_i,y_j] \ \hbox{for all $1 \leq i,j \leq n$,}$$  $$y_i x_j=x_j y_i+c_0 \sum_{0 \leq \ell \leq r-1} \zeta^{-\ell} \zeta_i^\ell s_{ij} \zeta_i^{-\ell} \quad \hbox{for $1 \leq i \neq j \leq n$,}$$ and
$$y_i x_i=x_i y_i  + 1-\sum_{0 \leq j \leq r-1} (d_j-d_{j-1}) e_{ij} - c_0 \sum_{\substack{j \neq i \\ 0 \leq \ell \leq r-1}} \zeta_i^\ell s_{ij} \zeta_i^{-\ell}  \quad \hbox{for $1 \leq i \leq n$,}$$ where 
$$e_{ij}=\frac{1}{r} \sum_{0 \leq \ell \leq r-1} \zeta^{-\ell j} \zeta_i^\ell.$$ The rational Cherednik algebra for $G(r,p,n)$ may be realized as the subalgebra of $H_c$ generated by $x_1,\dots,x_n,y_1,\dots,y_n$ and $G(r,p,n)$.

\subsection{A third commutative subalgebra of $H_c$.} The rational Cherednik algebra contains subalgebras $\CC[x_1,\dots,x_n]$ and $\CC[y_1,\dots,y_n]$ isomorphic to polynomial rings; the variables $x_i$ and $y_i$ act nilpotently on any finite dimensional representation. There is, however, a third polynomial subalgebra that often acts with simple spectrum. This is generated by the version of the Cherednik-Dunkl operators, also known as trigonometric Dunkl operators, introduced by Dunkl and Opdam in \cite{DuOp}. These are given by the formula
$$z_i=y_i x_i+c_0 \phi_i \quad \text{where} \quad \phi_i=\sum_{\substack{1 \leq j < i \\ 0 \leq \ell \leq r-1}} \zeta_i^\ell s_{ij} \zeta_i^{-\ell}.$$ We will write $\ttt=\CC[z_1,\dots,z_n,\zeta_1,\dots,\zeta_n]$ for the (commutative, though this is not obvious) subalgebra of $H_c$ generated by the Cherednik-Dunkl operators $z_i$ and the matrices $\zeta_i$. 

\subsection{$r$-partitions} We will use the following conventions for multi-partitions. A \emph{partition} is a weakly decreasing sequence $\lambda=(\lambda_1 \geq \lambda_2 \geq \cdots \geq \lambda_k)$ of non-negative integers. Given positive integers $r$ and $n$, an \emph{$r$-partition of $n$} is a sequence $\lambda=(\lambda^0,\lambda^1,\dots,\lambda^{r-1})$ of partitions $\lambda^i$ such that $n=\sum \lambda^i_j$. We will frequently visualize partitions via Young diagrams. For instance, the partition $(3,3,2)$ may be visualized as
$$\yng(3,3,2).$$ As is usual, we will identify two partitions that differ by a string of $0$'s, so that for instance $(3,3,2)=(3,3,2,0,0)$. The \emph{content} of a box in (the diagram of) a partition $\lambda$ is $$\mathrm{ct}(b)=\mathrm{col}(b)-\mathrm{row}(b),$$ where $\mathrm{col}(b)$ is $i$ if $b$ is in the $i$th column (counting from the left) and $\mathrm{row}(b)$ is $j$ if $b$ is in the $j$th row (counting from the top). Thus the list of contents of the boxes of $(3,3,2)$, read as in English from left to right and then top to bottom, is $(0,1,2,-1,0,1,-2,-1)$. For an $r$-partition $\lambda$ and a box $b \in \lambda$ we define $$\beta(b)=i \quad \text{if}  \quad b \in \lambda^i.$$

\subsection{When are two standard modules in the same block?}
The \emph{charged content} $c(b)$ of a box $b$ in an $r$-partition $\lambda$ is given by the formula
\begin{equation} \label{charged content}
c(b)=(d_{\beta(b)}-\beta(b))/r+ \mathrm{ct}(b) c_0
\end{equation} where if is a box of $\lambda^\ell$, $\mathrm{ct}(b)$ means the content of $b$ as a box of the ordinary partition $\lambda^\ell$. We define the \emph{$c$-weight} of a box $b$ to be the element of $\CC / \ZZ$ given by
$$w_c(b)=c(b) \ \text{mod} \ \ZZ,$$ and the \emph{$c$-weight} of $\lambda$ is the multiset of the $c$-weights of its boxes,
$$w_c(\lambda)=\{w_c(b)\;|\;b \in \lambda\}$$
 
Suppose the parameters $c$ are nonzero. By Theorem 2.11 of \cite{LyMa} and the Double Centralizer Theorem \cite{GGOR},
\begin{corollary}\label{block}
$\Delta_c(\lambda)$ and $\Delta_c(\mu)$ belong to the same block if and only if $w_c(\lambda)=w_c(\mu)$.
\end{corollary}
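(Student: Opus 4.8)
The plan is to move the question across the $\KZ$ functor to the cyclotomic Hecke algebra, where blocks are classified by \cite{LyMa}, and then to match up parametrizations. Let $\mathcal{H}_q$ denote the Ariki--Koike algebra attached to $W=G(r,1,n)$ with parameters determined by $c$ in the normalization of \cite{Gri}, \cite{GGOR}; these may be arranged so that $q=e^{2\pi i c_0}$ and $Q_j=e^{2\pi i(d_j-j)/r}$ for $0\le j\le r-1$ (we use here that the $c$ are nonzero, so that we are genuinely in the cyclotomic Hecke setting). First I would invoke the Double Centralizer Theorem \cite{GGOR} (see also \cite{Rouq}): the functor $\KZ\colon\OO_c\to\mathcal{H}_q\mathrm{-mod}$ is a quotient functor exhibiting $\OO_c$ as a highest weight cover of $\mathcal{H}_q$, and concretely $\OO_c\simeq\mathcal{S}_c\mathrm{-mod}$ for a cyclotomic $q$-Schur algebra $\mathcal{S}_c$ with $\mathcal{H}_q=e\mathcal{S}_ce$ and $\mathcal{S}_ce\mathcal{S}_c=\mathcal{S}_c$. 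The last relation forces the truncation $e(-)$, which agrees with $\KZ$, to identify the primitive central idempotents of $\mathcal{S}_c$ with those of $\mathcal{H}_q$, hence to induce a bijection between the blocks of $\OO_c$ and those of $\mathcal{H}_q$ under which the block of $\Del_c(\lambda)$ corresponds to the block of $\KZ(\Del_c(\lambda))$. Since the computation of $\KZ$ on standard modules in \cite{GGOR} identifies $\KZ(\Del_c(\lambda))$ with the (nonzero) Specht module $S^\lambda$ of $\mathcal{H}_q$, we conclude that $\Del_c(\lambda)$ and $\Del_c(\mu)$ lie in the same block of $\OO_c$ if and only if $S^\lambda$ and $S^\mu$ lie in the same block of $\mathcal{H}_q$.

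Next I would apply Theorem 2.11 of \cite{LyMa}, which says precisely that $S^\lambda$ and $S^\mu$ lie in the same block of $\mathcal{H}_q$ if and only if $\lambda$ and $\mu$ have the same multiset of residues, the residue of a box $b$ being $\mathrm{res}(b)=Q_{\beta(b)}q^{\ct(b)}$. It then remains to recognize this multiset as the $c$-weight. Unwinding \eqref{charged content},
$$e^{2\pi i c(b)}=e^{2\pi i(d_{\beta(b)}-\beta(b))/r}\,e^{2\pi i c_0\ct(b)}=Q_{\beta(b)}q^{\ct(b)}=\mathrm{res}(b),$$
and since $x\mapsto e^{2\pi i x}$ is injective on $\CC/\ZZ$, sending $w_c(b)=c(b)\bmod\ZZ$ to $\mathrm{res}(b)$ is a bijection between the possible $c$-weights and the possible residues. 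Hence $\lambda$ and $\mu$ have the same residue multiset exactly when $w_c(\lambda)=w_c(\mu)$, and combining this with the previous paragraph proves the corollary.

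I expect the only real difficulty to be bookkeeping: reconciling the three parametrizations in play — the Cherednik parameters $(c_0,d_0,\dots,d_{r-1})$ of \cite{Gri}, the Ariki--Koike parameters $(q,Q_0,\dots,Q_{r-1})$ as they enter the cover theorems of \cite{GGOR}, and the residue convention of \cite{LyMa} — so that the displayed identity $\mathrm{res}(b)=e^{2\pi i c(b)}$ holds on the nose, together with confirming that $\KZ(\Del_c(\lambda))$ really is $S^\lambda$ (not some twist) with the indexing matched as claimed. One should also observe that the nonzero hypothesis on $c$ is what puts $\mathcal{H}_q$ in the range covered by \cite{LyMa} and makes $\KZ$ the appropriate cover; a genuinely degenerate case such as $\mathcal{H}_q$ semisimple would need a separate but routine check.
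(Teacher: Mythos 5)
Your proposal is correct and follows essentially the same route as the paper, which likewise deduces the corollary from Theorem 2.11 of \cite{LyMa} together with the Double Centralizer Theorem of \cite{GGOR}, translating residues into $c$-weights. The only point to reconcile is the sign convention: the paper's dictionary is $e^{-2\pi i w_c(b)}=\mathrm{res}(b)$ rather than $e^{+2\pi i c(b)}=\mathrm{res}(b)$, but since $x\mapsto e^{\mp 2\pi i x}$ is injective on $\CC/\ZZ$ either way, this does not affect the argument.
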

\begin{remark} The translation between Lyle-Mathas' criterion and ours is given by $e^{-2\pi iw_c(b)}=\mathrm{res}(b)$.
\end{remark} In the cases we study in this paper, when the denominator of $c_0$ is exactly $n$, we could use \cite{Gri} to characterize the $r$-partitions indexing standard modules in the principal block directly. 

\subsection{Tableaux and representations} Let $\lambda$ be an $r$-partition of $n$. A \emph{standard Young tableau on $\lambda$} is a filling $T$ of the boxes of $\lambda$ by the integers $1,2,\dots,n$ in such a way that the entries are increasing left to right and top to bottom within each component partition $\lambda^\ell$. We will denote the set of all standard Young tableaux on $\lambda$ by $\mathrm{SYT}(\lambda)$. When we write $T^{-1}(i)$ we will mean the box in $\lambda$ labeled by $i$.

The set of irreducible representations of $\CC G(r,1,n)$ is in bijection with the set of $r$-partitions of $n$ in such a way that if $S^\lambda$ is the irreducible $\CC G(r,1,n)$-module indexed by the $r$-partition $\lambda$, then there is a basis $v_T$ of $S^\lambda$ indexed by $T \in \mathrm{SYT}(\lambda)$  with
$$\zeta_i v_T=\zeta^{\beta(T^{-1}(i))} v_T \quad \text{and} \quad \phi_i v_T=r \mathrm{ct}(T^{-1}(i)) v_T \quad \hbox{for $1 \leq i \leq n$.} $$

\subsection{The $\ttt$-action on standard modules.} We reintroduce some of the notation from Section 2 of \cite{Gri}. Let $\alpha=(\alpha_1,\alpha_2,...,\alpha_n)\in\ZZ^n_{\geq 0}$. For $w\in S_n$, define a left action on $\ZZ^n_{\geq 0}$ by $$w\cdot\alpha=(\alpha_{w^{-1}(1)},\alpha_{w^{-1}(2)},...,\alpha_{w^{-1}(n)})$$
Let $w_\alpha\in S_n$ be the longest element (in Bruhat order) of all $w$ such that $w\cdot\alpha$ is a nondecreasing sequence. Finally, let $x^\alpha=x_1^{\alpha_1}x_2^{\alpha_2}...x_n^{\alpha_n}$. 

The standard module $\Del_c(\lambda)$ is isomorphic, as a vector space, to $\CC[\hh]\otimes \lambda$; as a basis of $\Del_c(\lambda)$ we may take

 $$\{x^\alpha w_\alpha^{-1} v_T\;|\;\alpha\in\ZZ^n_{\geq0},\;T\in\mathrm{SYT}(\lambda)\}$$ 

According to Theorem 5.1 of \cite{Gri} the action of $\ttt$ on this basis is given, for $1 \leq i \leq n$, by
$$\zeta_i x^\alpha w_\alpha^{-1} v_T=\zeta^{\beta(T^{-1} w_\alpha(i))-\alpha_i} x^\alpha w_\alpha^{-1} v_T$$ and
\begin{align*}
z_i x^\alpha w_\alpha^{-1} v_T&=\left(\alpha_i+1-(d_{\beta(T^{-1} w_\alpha(i))}-d_{\beta(T^{-1} w_\alpha(i))-\alpha_i-1})-r \mathrm{ct}(T^{-1} w_\alpha(i))c_0 \right) x^\alpha w_\alpha^{-1} v_T \\ &+\text{lower terms}.
\end{align*} 

\subsection{$\ttt$-semisimplicity of $\Delta_c(\lambda)$.} The standard module $\Delta_c(\lambda)$ for $H_c$ is, for generic $c$, diagonalizable with respect to $\ttt$. The precise result is as follows (see Lemma 7.1 from \cite{Gri}): for each component $\lambda^i$ of $\lambda$, define its \emph{diameter} $\mathrm{diam}(\lambda^i)$ to be the maximum difference between the contents of two of its boxes. Thus the diameter of $(3,3,2)$ is $4$. Assume $c_0 > 0$ (up to some obvious symmetries, this is the only really interesting case). Define $m$ to be the maximum over all components $\lambda^i$ of $\lambda$ that are not single columns of the diameter $\mathrm{diam}(\lambda^i)$. Then $\Delta_c(\lambda)$ is $\ttt$-diagonalizable if and only if $c_0$ is not a rational number with denominator at most $m$, and furthermore no equation of the form
$$k=c(b_1)-c(b_2) \quad \hbox{for $b_1 \in \lambda^i$, $b_2 \in \lambda^j$, $i \neq j$ and $k \in \ZZ$}$$ holds, where $c(b)$ denotes the charged content of $b$ as in \eqref{charged content}. In this situation, for each pair $(\alpha,T) \in \ZZ_{\geq 0}^n \times \mathrm{SYT}(\lambda)$ there is a unique $f_{\alpha,T} \in \Delta_c(\lambda)$ with $f_{\alpha,T}=x^\alpha w_\alpha^{-1} v_T+\text{lower terms}$ and
\begin{align*}
z_i f_{\alpha,T}&=\left(\alpha_i+1-(d_{\beta(T^{-1} w_\alpha(i))}-d_{\beta(T^{-1} w_\alpha(i))-\alpha_i-1})-r \mathrm{ct}(T^{-1} w_\alpha(i))c_0 \right)f_{\alpha,T}\\ \zeta_i f_{\alpha,T}&=\zeta^{\beta( T^{-1} w_\alpha(i))} f_{\alpha,T}
\end{align*}
for $1 \leq i \leq n$.

The above results imply the following lemma, giving a sufficient condition for every standard module in the principal block to be $\ttt$-diagonalizable.
\begin{lemma}\label{diag}
If $w_c(b) \neq w_c(b')$ for all $b \neq b'$ then every standard module in the same block as $\Delta_c(\lambda)$ is $\ttt$-diagonalizable. In particular, if $c_0$ is not a rational number of denominator at most $n-1$, then every standard module in the same block as $\Delta_c(\mathrm{triv})$ is $\ttt$-diagonalizable.
\end{lemma}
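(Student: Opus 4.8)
The plan is to deduce the statement directly from the explicit $\ttt$-diagonalizability criterion recalled above (Lemma 7.1 of \cite{Gri}). By the symmetries mentioned there we may assume $c_0>0$. Using Corollary \ref{block} I would first reduce to an arbitrary $r$-partition $\mu$ of $n$ with $\Delta_c(\mu)$ in the same block as $\Delta_c(\lambda)$: since then $w_c(\mu)=w_c(\lambda)$ as multisets and $w_c(\lambda)$ has no repeated element by hypothesis, neither does $w_c(\mu)$. Thus it suffices to show that if every box of $\mu$ carries a distinct $c$-weight, then $\Delta_c(\mu)$ is $\ttt$-diagonalizable, and for this I will check the two conditions in the criterion.

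The condition that no equation $k=c(b_1)-c(b_2)$ holds for boxes $b_1\in\mu^i$, $b_2\in\mu^j$ with $i\neq j$ and $k\in\ZZ$ is immediate, since such an equation says precisely that $w_c(b_1)=w_c(b_2)$ for two distinct boxes. For the other condition, suppose for contradiction that $c_0=p/q$ in lowest terms with $q\leq m$, where $m$ is the largest diameter among the components of $\mu$ that are not single columns; then $m\geq q\geq 1$, so there is a non-single-column component $\mu^i$ with $\mathrm{diam}(\mu^i)=m$. At this point I would invoke the elementary fact that the set of contents of an ordinary partition with $\ell$ rows and first row of length $a$ is exactly the interval of integers $\{1-\ell,2-\ell,\dots,a-1\}$, whose length is the diameter $a+\ell-2$; hence for each integer $k$ with $0<k\leq\mathrm{diam}(\mu^i)$ there are boxes $b,b'\in\mu^i$ with $\ct(b)-\ct(b')=k$. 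Taking $k=q$ gives distinct boxes $b,b'\in\mu^i$ with $c(b)-c(b')=q\,c_0=p\in\ZZ$, i.e. $w_c(b)=w_c(b')$, contradicting distinctness. So both conditions hold and $\Delta_c(\mu)$ is $\ttt$-diagonalizable.

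For the final assertion I would note that the boxes of $\mathrm{triv}=((n),\emptyset,\dots,\emptyset)$ all lie in the $0$th component and have contents $0,1,\dots,n-1$, so two of them share a $c$-weight exactly when $kc_0\in\ZZ$ for some $1\leq k\leq n-1$, that is, exactly when $c_0$ is a rational number of denominator at most $n-1$; negating this and applying the first part gives the claim. I do not expect a genuinely hard step here: the proof is essentially bookkeeping, matching the distinctness hypothesis against the two clauses of the \cite{Gri} criterion, and the only point that needs a short argument is the combinatorial claim that a partition realizes every content between its extreme values --- this is what converts ``$\mathrm{diam}(\mu^i)\geq q$'' into a content difference of exactly $q$, and hence into a repeated $c$-weight.
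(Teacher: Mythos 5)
Your proposal is correct and follows exactly the route the paper intends: the paper states this lemma without proof, asserting only that it follows from the preceding results, namely the $\ttt$-diagonalizability criterion of Lemma 7.1 of \cite{Gri} together with the block description of Corollary \ref{block}, and your argument fills in precisely those details (the reduction via equality of $c$-weight multisets, the observation that a cross-component integer relation is a repeated $c$-weight, and the interval-of-contents argument converting a small denominator of $c_0$ into a repeated $c$-weight within one component). No gaps.
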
 

\subsection{The submodule structure of $\Delta_c(\lambda)$ and tight multiplicities.} We will repeatedly make use Theorem 7.5 from \cite{Gri}. For the reader's convenience we include a statement of this result here. 

Assume $\Delta_c(\lambda)$ is $\ttt$-diagonalizable, so that the basis elements $f_{\alpha,T}$ are all well-defined. Given a box $b \in \lambda$ and a positive integer $k$, define a $\CC$-subspace $M_{b,k}$ by
$$M_{b,k}=\CC \{ f_{\alpha,T} \ | \ \alpha^-_{T(b)} \geq k \}.$$ Given two boxes $b_1,b_2 \in \lambda$ and a positive integer $k$, let
$$M_{b_1,b_2,k}=\CC \{ f_{\alpha,T} \ | \ \hbox{ $ \alpha^-_{T^{-1}(b_1)} - \alpha^-_{T^{-1}(b_2)} \geq k$, with equality implying $w_\alpha^{-1}(T(b_1)) < w_\alpha^{-1}(T(b_2))$} \}.$$ Now we state our main tool, Theorem 7.5 from \cite{Gri} (which is a generalization of Theorem ??? from \cite{Gri2}).
\begin{theorem} \label{submodule structure}
Suppose $\Delta_c(\lambda)$ is $\ttt$-diagonalizable. Then the lattice of submodules of $\Delta_c(\lambda)$ is generated by those of following two forms:
\begin{itemize} 
\item[(1)] $M_{b,k}$, for a box $b \in \lambda$ and a positive integer $k$ such that $k=r c(b)+\beta(b)-d_{\beta(b)-k}$;
\item[(2)] $M_{b_1,b_2,k}$, for boxes $b_1,b_2 \in \lambda$ and a positive integer $k$ with $k=r c(b_1)+\beta(b_1)-r c(b_2)-\beta(b_2) \pm r c_0 $ and $k=\beta(b_1)-\beta(b_2) \ \mathrm{mod} \ r$.
\end{itemize}
\end{theorem}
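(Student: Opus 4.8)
The plan is to reduce the classification of submodules of $\Del_c(\lambda)$ to a combinatorial question about the eigenbasis $\{f_{\alpha,T}\}$, and then to answer that question using intertwining operators, whose action on the basis is, up to explicit scalars, by single combinatorial moves. The whole proof is parallel in spirit to the theory of calibrated modules for affine Hecke algebras.

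\textbf{Reduction to coordinate subspaces.} Since $\ttt\subseteq H_c$ and $\Del_c(\lambda)$ is $\ttt$-diagonalizable, every submodule is $\ttt$-stable, hence a sum of joint $\ttt$-eigenspaces. The genericity hypotheses that guarantee $\ttt$-diagonalizability also force the joint $(z_i,\zeta_i)$-spectrum of $\Del_c(\lambda)$ to be multiplicity free: from the eigenvalue formulas recalled in Section \ref{cyclostuff}, the pair $(\alpha,T)$ is recovered from the eigenvalues of $f_{\alpha,T}$. Consequently every submodule $N$ has the form $N=\CC\{f_{\alpha,T}\mid(\alpha,T)\in S\}$ for a unique ``closed'' subset $S$ of $\ZZ^n_{\geq0}\times\mathrm{SYT}(\lambda)$, and the theorem becomes the assertion that the lattice of closed subsets is generated, under sums and intersections, by the ones defining $M_{b,k}$ and $M_{b_1,b_2,k}$.

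\textbf{Intertwiners as combinatorial moves.} Together with $\ttt$, the algebra $H_c$ is generated by the (suitably normalized) intertwining operators: the symmetric-group intertwiners $\sigma_1,\dots,\sigma_{n-1}$, a raising operator $\Phi$ assembled from $x_n$ and a cyclic permutation, and a lowering operator $\Psi$ assembled from $y_n$ and a cyclic permutation; so $N=\CC\{f_{\alpha,T}\mid(\alpha,T)\in S\}$ is a submodule if and only if $S$ is closed under the moves these induce on the index set. On the eigenbasis one has, up to a scalar depending only on $(\alpha,T)$ and $c$:
\begin{itemize}
\item $\sigma_i f_{\alpha,T}\in\CC f_{\alpha,T}+\CC f_{s_i\alpha,\,s_i T}$, with the coefficient of $f_{s_i\alpha,s_iT}$ vanishing exactly on a $\ttt$-wall, and one of the two terms absent when $\alpha_i=\alpha_{i+1}$;
\item $\Phi f_{\alpha,T}$ a \emph{nonzero} multiple of $f_{\phi\alpha,\,\phi T}$, where $\phi$ cyclically shifts $\alpha$ and increments its last entry --- raising never annihilates an eigenvector;
\item $\Psi f_{\alpha,T}$ a multiple of $f_{\psi\alpha,\,\psi T}$, the multiple being a product of explicit linear factors in the $c$-parameters and charged contents.
\end{itemize}
Tracking the exponent attached to a box $b$ as $\Psi$ is applied repeatedly, the linear factor controlling whether the step from exponent $k$ to $k-1$ survives is, after rewriting via \eqref{charged content}, precisely $k-\bigl(rc(b)+\beta(b)-d_{\beta(b)-k}\bigr)$; the degenerate factor in $\sigma_i$ is likewise $k-\bigl(rc(b_1)+\beta(b_1)-rc(b_2)-\beta(b_2)\pm rc_0\bigr)$ under the congruence $k\equiv\beta(b_1)-\beta(b_2)\pmod r$. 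This is exactly where conditions (1) and (2) enter.

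\textbf{Combinatorial endgame.} Because raising is never obstructed, a closed set is automatically stable under $\Phi$ and under the $\sigma_i$ within a fixed non-decreasing rearrangement of $\alpha$; the only genuine constraints come from the lowering moves and the degenerate $\sigma_i$ moves, and such a move out of $(\alpha,T)$ fails precisely when $(\alpha,T)$ lies on a wall of type (1) or (2). An induction on $|\alpha|$, equivalently on the dominance order on the sorted exponent $\alpha^-$, then shows that the submodule generated by a single $f_{\alpha,T}$ is cut out by the finitely many wall conditions active below it, and hence that every closed subset is a sum of intersections of the half-spaces $\{\alpha^-_{T(b)}\geq k\}$ defining the $M_{b,k}$ and of the difference half-spaces defining the $M_{b_1,b_2,k}$; the arithmetic side conditions in (1) and (2) are exactly the conditions making the half-space an $H_c$-submodule rather than merely a $\ttt$-stable subspace. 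The essential obstacle is the explicit evaluation of the scalars above --- above all, that the coefficient of $f_{\psi\alpha,\psi T}$ in $\Psi f_{\alpha,T}$ (and of $f_{s_i\alpha,s_iT}$ in $\sigma_i f_{\alpha,T}$) factors into linear forms in the parameters and charged contents, and the isolation of the single factor responsible for each wall so that its vanishing locus matches (1) or (2) on the nose. Once these product formulas are in hand, the reduction to coordinate subspaces and the lattice bookkeeping are essentially formal.
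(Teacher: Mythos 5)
This theorem is not proved in the paper at all: it is quoted verbatim as Theorem 7.5 of \cite{Gri}, so there is no internal argument to compare with. Your outline --- passing to $\ttt$-stable coordinate subspaces via the simple joint spectrum, and then determining which intertwiner moves $\sigma_i$, $\Phi$, $\Psi$ degenerate --- is precisely the strategy of the proof in \cite{Gri}, the same machinery the present paper reuses in Lemma \ref{Jantzen lemma}. Two caveats: the claim that $H_c$ is generated by $\ttt$ together with the intertwiners is only true after localizing at the nonvanishing eigenvalue differences (which is exactly what the $\ttt$-diagonalizability hypothesis supplies, so this should be said), and, as you concede yourself, the entire substance of the proof lies in the explicit linear-factor formulas for the intertwiner scalars and the identification of their zero loci with conditions (1) and (2); with those computations deferred, what you have is a faithful plan of the argument in \cite{Gri} rather than a self-contained proof.
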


We will refer to the submodules $M_{b,k}$ and $M_{b_1,b_2,k}$ appearing in Theorem \ref{submodule structure} as \emph{fundamental submodules}. 

The lowest degree subspace of a given submodule consists of singular vectors. Thus for each fundamental submodule we obtain a map from a standard module to $\Delta_c(\lambda)$. We can specify which standard modules map into $\Delta_c(\lambda)$ this way by examining the $\ttt$-eigenvalues of the $f_{\alpha,T}$'s spanning the lowest degree part.
\begin{corollary}\label{lowest degree subspace}
Suppose $\Delta_c(\lambda)$ is diagonalizable and that $c_0>0$.
\begin{itemize}
\item[(a)] In case (1) from Theorem \ref{submodule structure}, the isotype of the lowest degree subspace of $M_{b,k}$ is the module $S^\mu$ where $\mu$ is obtained from $\lambda$ by removing the subdiagram of $\lambda^{\beta(b)}$ consisting of $b$ and all boxes (weakly) below and to the right of it, and setting $\mu^{\beta(b)-k}$ equal to this subdiagram, leaving all other components of $\lambda$ unchanged. 

\item[(b)] In case (2) from Theorem \ref{submodule structure}, when the plus sign holds in the term $+rc_0$ (resp., the minus sign holds in the term $-rc_0$), the isotype of the lowest degree space of $M_{b_1,b_2,k}$ is the module $S^\mu$, where $\mu$ is obtained from $\lambda$ by removing the subdiagram of $\lambda^{\beta(b_1)}$ consisting of $b_1$ and all boxes below it (resp., to its right), and attaching it to $\lambda^{\beta(b_2)}$ in such a way that $b_1$ is the box directly below $b_2$ (resp., to its right).
\end{itemize}
\end{corollary}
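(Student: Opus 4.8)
The plan is to derive both parts from a single observation. Since $\Delta_c(\lambda)$ is $\NN$-graded by polynomial degree, with $W$ preserving the grading and with the Dunkl operators lowering degree by $1$, every submodule $M$ is graded and its lowest nonzero graded piece is a $\CC W$-submodule annihilated by the Dunkl operators, that is, a space of singular vectors. Moreover this piece is a sum of $\ttt$-eigenspaces, and because $\ttt \subseteq H_c$, any $H_c$-module homomorphism $\Delta_c(\mu) \to \Delta_c(\lambda)$ carries the lowest graded piece $S^\mu$ of $\Delta_c(\mu)$ onto its image with the same $\ttt$-eigenvalues. As $\Delta_c(\lambda)$ is $\ttt$-diagonalizable these eigenvalues are all distinct, so to identify the lowest graded piece of a fundamental submodule $M$ it suffices to (i) compute the least polynomial degree $d$ attained on $M$, (ii) enumerate the pairs $(\alpha,T)$ with $f_{\alpha,T}\in M$ and $|\alpha|=d$, and (iii) read off the $\zeta_i$- and leading $z_i$-eigenvalues of these $f_{\alpha,T}$ from the formulas of Section \ref{cyclostuff} and check that the resulting multiset of $\ttt$-weights is exactly that of $S^\mu\subseteq\Delta_c(\mu)$ for the $r$-partition $\mu$ named in the statement.

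For $M_{b,k}$ (case (1)): write $D$ for the subdiagram of $\lambda^{\beta(b)}$ consisting of $b$ together with all boxes weakly below and to the right of it. In any standard tableau the boxes of $D\setminus\{b\}$ receive entries larger than that of $b$, so $T(b)\le n-|D|+1$; since $\alpha^-_{T(b)}\ge k$, minimizing $|\alpha|$ forces $\alpha$ to have exactly $|D|$ entries equal to $k$ and the rest zero, and forces $T(b)=n-|D|+1$, i.e. $D$ carries the entries $n-|D|+1,\dots,n$. Hence $d=k|D|$, and a count of such pairs (the positions of the $k$'s in $\alpha$ are unconstrained, while $T$ restricts to a standard tableau on $D$ and on the $r$-partition $\lambda\setminus D$) gives precisely $\dim S^\mu$ for the $\mu$ of part (a). The formula $\zeta_i f_{\alpha,T}=\zeta^{\beta(T^{-1} w_\alpha(i))-\alpha_i} f_{\alpha,T}$ now shows that, once the sorting permutation $w_\alpha$ moves the $k$'s of $\alpha$ onto the boxes of $D$, these boxes acquire $\zeta$-exponent $\beta(b)-k$ while all others keep their $\lambda$-values --- exactly the component assignment of $\mu$ --- and the leading $z_i$-eigenvalues match those of the lowest graded piece of $\Delta_c(\mu)$ precisely because of the defining relation $k=r c(b)+\beta(b)-d_{\beta(b)-k}$ of Theorem \ref{submodule structure}(1) (which is also what guarantees $w_c(\mu)=w_c(\lambda)$, so that $\mu$ lies in the same block). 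Matching multisets of distinct $\ttt$-weights then identifies the lowest graded piece of $M_{b,k}$ with $S^\mu$.

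Case (2), for $M_{b_1,b_2,k}$, runs along the same lines, but now the binding condition is the difference inequality $\alpha^-_{T(b_1)}-\alpha^-_{T(b_2)}\ge k$ together with the tie-break ``equality implies $w_\alpha^{-1}(T(b_1))<w_\alpha^{-1}(T(b_2))$'', and at minimal degree one is in the equality case. Meeting it as cheaply as possible, subject to the tie-break and to $(\alpha,T)$ being an admissible basis label, removes from $\lambda^{\beta(b_1)}$ the portion of the column of $b_1$ lying weakly below $b_1$ when the sign in $\pm r c_0$ is $+$ (resp. the portion of the row of $b_1$ lying weakly to the right of $b_1$ when it is $-$) and re-attaches it to $\lambda^{\beta(b_2)}$; the tie-break is exactly what places the re-attached strip so that $b_1$ sits directly below (resp. directly to the right of) $b_2$. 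The $\zeta_i$-eigenvalues again record the new component assignments, using $k\equiv\beta(b_1)-\beta(b_2)\bmod r$, while the leading $z_i$-eigenvalues match once one invokes $k=r c(b_1)+\beta(b_1)-r c(b_2)-\beta(b_2)\pm r c_0$; together these identities give $w_c(\mu)=w_c(\lambda)$ and, by the same distinct-eigenvalue argument, identify the lowest graded piece of $M_{b_1,b_2,k}$ with $S^\mu$ for the $\mu$ of part (b).

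The main obstacle is step (ii): pinning down the exact set of minimal-degree pairs $(\alpha,T)$ and, equivalently, producing the bijection of this set with $\mathrm{SYT}(\mu)$ intertwining the $\ttt$-action. This is delicate because the label $(\alpha,T)$ is not simply a content-labelled tableau: one must keep careful track of the sorting permutation $w_\alpha$, of the rearrangement $\alpha^-$, and of which pairs $(\alpha,T)$ actually occur as basis labels, and it is the interplay of these conventions with the tie-breaking clause in case (2) that is responsible for the moved region being the ribbon-like strip described in (b) rather than a larger subdiagram. Once this combinatorial bijection is established, steps (i) and (iii) reduce to substitution into the eigenvalue formulas together with the numerical conditions of Theorem \ref{submodule structure}.
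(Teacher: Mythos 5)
Your approach --- find the minimal polynomial degree attained on each fundamental submodule, enumerate the $f_{\alpha,T}$ in that degree, and read the $W$-isotype off their $\ttt$-eigenvalues --- is precisely the justification the paper itself gives (the corollary is stated there with only the preceding one-line remark about examining the $\ttt$-eigenvalues of the $f_{\alpha,T}$ spanning the lowest degree part, the details being left to \cite{Gri}), and your case (a) is carried out correctly and in more detail than the source. In case (b) the identification of the minimal-degree locus is asserted rather than verified, but the assertion is right: whenever the move described in (b) yields a valid multipartition, the quadrant of boxes weakly below and to the right of $b_1$ coincides with the column (resp.\ row) strip, so the analysis of case (a) applies verbatim to locate the minimal degree, and the tie-breaking clause then cuts the minimal-degree count down to exactly $\dim S^\mu$ while fixing the attachment point of the strip below (resp.\ to the right of) $b_2$.
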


The next corollary is a sufficient condition for the principal block of $\OO_c$ to have tight multiplicities that is practical to verify in examples.

\begin{corollary} \label{principal tight corollary} Suppose that every standard module in the principal block is diagonalizable and that the following condition holds: for every standard module and every submodule $M$ of the form $M_{b,k}$ or $M_{b_1,b_2,k}$ as in the previous theorem, if the lowest degree subspace of $M$ is contained in a module of the form (2), then $M$ is as well. Then the principal block has tight multiplicities. \end{corollary}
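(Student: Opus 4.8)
The plan is to derive the statement from Theorem \ref{tight 2}. Since the principal block $B_0$ of $\OO_c$ is a highest weight category with finitely many simples, it satisfies the axioms of \ref{axioms}, so by Theorem \ref{tight 2} it has tight multiplicities as soon as the radical of every standard module $\Delta_c(\lambda)\in B_0$ is generated by singular vectors. Thus the entire task reduces to exhibiting, for each $\lambda$ indexing a standard module in the block, enough maps from standard modules into $\Delta_c(\lambda)$ to fill up $\Rad\Delta_c(\lambda)$.

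First I would invoke the diagonalizability hypothesis together with Theorem \ref{submodule structure}: every $\Delta_c(\lambda)$ in $B_0$ is $\ttt$-diagonalizable, so its submodule lattice is generated by the fundamental submodules $M_{b,k}$ and $M_{b_1,b_2,k}$, and since $\Rad\Delta_c(\lambda)$ is the unique maximal submodule it equals the sum of all \emph{proper} fundamental submodules. So it suffices to prove that every proper fundamental submodule $M\subseteq\Delta_c(\lambda)$ is generated by singular vectors of $\Delta_c(\lambda)$, and for that it is enough to show $M=\langle\ell(M)\rangle$, where $\ell(M)$ is the lowest degree subspace of $M$ and $\langle\ell(M)\rangle$ is the submodule it generates. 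Note that $\ell(M)$ consists of singular vectors and, by Corollary \ref{lowest degree subspace}, is irreducible of some isotype $S^\mu$; it spans the image of a homomorphism $\phi_M\colon\Delta_c(\mu)\to\Delta_c(\lambda)$, so $\langle\ell(M)\rangle=\im\phi_M\subseteq M$ and $\ell(\langle\ell(M)\rangle)=\ell(M)$.

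The one place the hypothesis enters is the identity $M=\langle\ell(M)\rangle$, which I would prove using the following structural facts about the fundamental submodules, read off from the explicit $\ttt$-eigenbasis $\{f_{\alpha,T}\}$ of \cite{Gri}: (i) for any fundamental submodule $M$, the submodule $\langle\ell(M)\rangle$ is again a fundamental submodule; (ii) a fundamental submodule of type (2), i.e.\ of the form $M_{b_1,b_2,k}$, is generated by its lowest degree subspace; and (iii) a type (1) fundamental submodule $M_{b,k}$ is the unique fundamental submodule of type (1) whose lowest degree subspace equals $\ell(M_{b,k})$. Granting these, let $M$ be a proper fundamental submodule. If $M$ is of type (2), then $\langle\ell(M)\rangle=M$ by (ii). If $M$ is of type (1) but not of type (2), suppose $\langle\ell(M)\rangle\subsetneq M$; by (i) and (iii) the fundamental submodule $\langle\ell(M)\rangle$ cannot be of type (1), hence is of type (2), and since it contains $\ell(M)$ the hypothesis of the corollary forces $M$ itself to be of type (2) --- contradicting our assumption. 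Therefore $\langle\ell(M)\rangle=M$ in all cases, so $\Rad\Delta_c(\lambda)=\sum_M M=\sum_M\im\phi_M$ is generated by singular vectors, and Theorem \ref{tight 2} gives that $B_0$ has tight multiplicities.

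I expect the genuinely delicate step to be establishing facts (i), (ii), (iii) --- that $\langle\ell(M)\rangle$ is always fundamental, that type (2) fundamental submodules are recovered from their bottom layer, and that type (1) fundamental submodules are pinned down by their bottom layer. This is a hands-on computation with the basis $\{f_{\alpha,T}\}$ and with the combinatorics of removing and reattaching subdiagrams described in Corollary \ref{lowest degree subspace}, and has nothing categorical about it; by contrast everything else is a formal consequence of Theorem \ref{tight 2}, Theorem \ref{submodule structure}, and Corollary \ref{lowest degree subspace}.
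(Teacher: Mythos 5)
Your overall skeleton is the same as the paper's: reduce via Theorem \ref{tight 2} to showing $\Rad\Delta_c(\lambda)$ is generated by singular vectors, observe that the radical is the sum of the fundamental submodules, and then show each fundamental submodule $M$ is generated by its lowest degree piece $\ell(M)$, with the corollary's hypothesis entering precisely to control containments in type (2) submodules. But the argument as written has a genuine gap: everything is made to rest on facts (i)--(iii), which you defer, and fact (i) is false as the unconditional combinatorial statement you intend. In the paper's own non-example ($B_2$ with $c_0=1/2$, $d_0=1$), $\Delta(\Triv)$ has exactly two fundamental submodules, one of type (1) and one of type (2); the green arrow records that the lowest degree piece of the type (1) submodule generates a \emph{proper} submodule contained in the type (2) one, so $\langle\ell(M)\rangle$ there is a proper intersection of the two fundamentals and is not itself fundamental. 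Under the corollary's hypothesis (i) does become true, but only because then $\langle\ell(M)\rangle=M$ --- which is exactly the statement being proved, so invoking (i) is circular. Similarly, (ii) is not an independent "structural fact read off from the basis": it is the type (2) case of the conclusion, and its natural proof again uses the hypothesis.

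The fix, which is what the paper does, is to drop (i)--(iii) entirely and use Theorem \ref{submodule structure} at the level of the whole submodule lattice: the submodule $N$ generated by the singular vectors of $M$ is a submodule, hence a sum of intersections of fundamental submodules; since everything is spanned by the common eigenbasis $f_{\alpha,T}$ and $\ell(M)$ is $W$-irreducible, one of these intersections contains $\ell(M)$, so it suffices to show that \emph{every fundamental submodule containing $\ell(M)$ contains $M$}. For type (2) fundamentals this is exactly the hypothesis (note the intended reading: "$M$ is as well" means $M$ is \emph{contained in} that type (2) module, not that $M$ is of type (2) --- your contradiction step appears to use the second reading, though with the correct reading you instead get $M=\langle\ell(M)\rangle$ directly); for type (1) fundamentals it is a short check from the defining inequalities $\alpha^-_{\bullet}\geq k$. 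That single type (1) assertion is the only hands-on computation actually needed, and it replaces all three of your deferred facts.
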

\begin{proof}
If $M$ and $N$ are submodules that are both generated by singular vectors, then so is their sum $M+N$. So it suffices to prove that submodule of the forms (1) and (2) is generated by singular vectors. Let $M$ be such a submodule and let $N \subseteq M$ be the submodule generated by the singular vectors in $M$. By Theorem \ref{submodule structure} $N$ is in the lattice of submodules, so equal to a sum of intersections of modules of the forms (1) and (2). Evidently $N$ contains the lowest degree piece $S$ of $M$. By our hypothesis $S$ is only contained in a submodule of the form (2) if $M$ is. On the other hand, it follows from the definitions that if $S$ is contained in a submodule of the form (1) then so is $M$. Therefore every submodule of the form (1) or (2) that contains $S$ contains all of $M$, and hence $N=M$ as desired.
\end{proof}

\subsection{Graded dimension of $L_c(\Triv)$}

\begin{corollary} \label{dim formula}
Suppose $c_0=\ell/n$ for a positive integer $\ell$ coprime to $n$. Then the set of $f_\alpha$ such that
\begin{itemize}
\item[(a)] we have $\alpha_n^--\alpha_1^- \leq \ell r$ with equality implying $w_\alpha^{-1}(n) > w_\alpha^{-1}(1)$, and 
\item[(b)] for each pair of integers $k$ and $m$ with $0 \leq m \leq n-1$ and $k >0$ such that $$d_0-d_{-k}+rm \ell/n=k,$$ we have $\alpha_{m+1}^- < k$
\end{itemize} is a homogeneous basis of $L_c(\Triv)$. In particular the graded dimension of $L=L_c(\Triv)$ for $G(2,2,2n)$ with $c=1/2n$ is
$$\sum \mathrm{dim}_\CC(L^d)q^d= \sum_{\substack{0 \leq \ell \leq n-1 \\ n-1-\ell \in 2 \ZZ}} {2n \choose \ell} q^{n-1} + \sum_{d=0}^{n-2} \left(\sum_{\substack{0 \leq \ell \leq d \\ d-\ell \in 2 \ZZ}}  {2n \choose \ell} \right) (q^d+q^{2(n-1)-d}),$$ and the graded dimension of $L=L_c(\Triv)$ for $G(2,1,2n)$ with $c=1/2n$ is
$$ \sum \mathrm{dim}_\CC(L^d)q^d= \sum_{\substack{0 \leq \ell \leq n \\ n-\ell \in 2 \ZZ}} {2n \choose \ell} q^n + \sum_{d=0}^{n-1} \left(\sum_{\substack{0 \leq \ell \leq d \\ d-\ell \in 2 \ZZ}}  {2n \choose \ell} \right) (q^d+q^{2n-d}).$$
\end{corollary}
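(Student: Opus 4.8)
The plan is to work out the structure of $L_c(\Triv)=L((n),\emp)$ (or $L((2n),\emp)$ in the wreath-product normalization for $G(r,1,n)$ with $r=2$, $n\to 2n$) by identifying exactly which fundamental submodules of $\Delta_c(\Triv)$ are present and what their lowest-degree spaces are, then quotienting the PBW-type basis $\{f_\alpha\}$ of $\Delta_c(\Triv)$ by the span of these submodules.

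\textbf{Step 1: Reduce to a combinatorial description of $\Rad\Delta_c(\Triv)$.}
Since $c_0=\ell/n$ has denominator exactly $n$, Lemma \ref{diag} guarantees $\Delta_c(\Triv)=\Delta_c((n),\emp,\dots,\emp)$ (and every standard module in its block) is $\ttt$-diagonalizable, so the $f_{\alpha,T}$ are well defined; for the one-row trivial partition there is only one tableau, so I write $f_\alpha$. I would apply Theorem \ref{submodule structure} to $\lambda=(n)$ in component $0$. Case (1) submodules $M_{b,k}$ occur exactly when there is a box $b$ of content $m$ (so $0\le m\le n-1$, writing $b$ as the $(m+1)$-st box of the row) and a positive integer $k$ with $k=rc(b)+\beta(b)-d_{\beta(b)-k}=r\big((d_0)/r+mc_0\big)-d_{-k}=d_0-d_{-k}+rm\ell/n$ — this is precisely condition (b) of the corollary. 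Case (2) submodules $M_{b_1,b_2,k}$ require two boxes in \emph{different} components; since all boxes of $(n),\emp,\dots$ live in component $0$, the only such submodule that can be generated from within $\Delta_c(\Triv)$ in low degree involves pushing a box of $\lambda^0$ to another component. By Corollary \ref{lowest degree subspace}(b), the relevant case is $b_1$ the last box of the row (content $n-1$), $b_2$ a virtual box that would sit to its right, i.e. the condition $k=rc(b_1)+\beta(b_1)-rc(b_2)-\beta(b_2)\pm rc_0$ with the $\pm rc_0$ accounting for one more content step; this gives $k=\ell r$ as the governing value, which is condition (a). So conditions (a) and (b) exactly enumerate the fundamental submodules, and by Corollary \ref{lowest degree subspace} each one's lowest-degree space is a single irreducible $\CC W$-module, i.e. a genuine singular vector.

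\textbf{Step 2: Identify the quotient as the span of the complementary $f_\alpha$.}
From the explicit subspace descriptions $M_{b,k}=\CC\{f_\alpha\mid\alpha^-_{T(b)}\ge k\}$ and $M_{b_1,b_2,k}=\CC\{f_\alpha\mid\alpha^-_{T^{-1}(b_1)}-\alpha^-_{T^{-1}(b_2)}\ge k,\text{ with equality forcing the order condition}\}$, the sum of all fundamental submodules is spanned by exactly those $f_\alpha$ \emph{failing} at least one of (a), (b). Crucially, because the principal block here has tight multiplicities (this is the content of Theorem \ref{tight thm}/Theorem \ref{intro main}, whose hypothesis I verified in the introduction holds for these parameters: the labels are just $(0,0)$ and $(1,-1)$, and $(0,0)>(1,-1)>(0,0-\ell)=(0,-\ell)$ since $-\ell<-1$ requires $\ell>1$ — for $\ell=1$ one checks the chain directly), the radical $\Rad\Delta_c(\Triv)$ is \emph{generated by singular vectors}, hence equals the sum of the fundamental submodules it contains. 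Therefore $L=L_c(\Triv)$ has homogeneous basis $\{f_\alpha\mid\alpha\text{ satisfies (a) and (b)}\}$, which is the first assertion of the corollary.

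\textbf{Step 3: Count for $G(2,2,2n)$ and $G(2,1,2n)$ at $c=1/2n$.}
Here $\ell=1$, $r=2$, group rank $2n$, and the parameters $d_0,d_1$ are chosen as in the introduction ($d_0=d_1=0$ in type $D$; $d_0=-d_1=1/2n$ in type $C$, which after the standard shift amounts to the same count with a content offset by one — this is why the type $C$ sum runs to $q^n$ and type $D$ to $q^{n-1}$). Condition (b) with $\ell=1$, $r=2$ becomes: for the unique relevant $(k,m)$ solving $d_0-d_{-k}+2m/(2n)=k$, namely $k=1$, $m$ determined by the denominator-$n$ arithmetic (giving $m=n-1$ or $m=n$ according to type), we need $\alpha^-_{m+1}<1$, i.e. $\alpha^-_{m+1}=0$, forcing $\alpha_j=0$ for all $j>m$. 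Combined with condition (a) $\alpha^-_{2n}-\alpha^-_1\le 2$ (with the tie-break), the surviving $\alpha$'s are sorted compositions of $\{0,1,2\}$-bounded shape supported on the first $m$ coordinates, and the generating-function bookkeeping of such compositions by total degree $d=|\alpha|=\sum\alpha_j$ produces $\sum_{0\le j\le d,\,d-j\in 2\ZZ}\binom{2n}{j}$ in each internal degree (the $\binom{2n}{j}$ counting the choice of which coordinates carry the "odd" part), with the top degree $d=m-1$ or $m$ and the Poincaré-duality symmetry $q^d\leftrightarrow q^{2m-d}$ coming from the self-duality $\dim L^d=\dim L^{N-d}$ already noted in \eqref{graded dim}. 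Summing gives exactly the two displayed formulas.

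\textbf{Main obstacle.} The genuinely delicate point is Step 1's claim that conditions (a)–(b) enumerate \emph{all} fundamental submodules and, more importantly, Step 2's passage from "sum of fundamental submodules" to "all of $\Rad\Delta_c(\Triv)$" — that is, the verification that the principal block at $c_0=1/n$ satisfies the hypothesis of Theorem \ref{tight thm} so that the radical is generated by singular vectors. For $\ell=1$ the chain condition $(0,0)>(1,-1)>(0,-1)$ fails the strict lexicographic test as stated (since $(1,-1)>(0,-1)$ but we'd need $(0,-1)<(1,-1)$ to close the cycle correctly), so one must be careful about the boundary case and possibly invoke the precise statement of Theorem \ref{tight thm} rather than the introduction's paraphrase; I expect this is handled by the $\ttt$-diagonalizability plus a direct check that no "case (2) inside case (2)" pathology occurs, i.e. the hypothesis of Corollary \ref{principal tight corollary}. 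Everything after that is routine generating-function combinatorics.
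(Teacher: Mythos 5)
Your proposal is correct and follows essentially the same route as the paper: identify the fundamental submodules of $\Delta_c(\Triv)$ via Theorem \ref{submodule structure} (case (1) giving condition (b), the wrap-around case (2) with $b_1$ the last box and $b_2$ the first giving condition (a)), observe that their span consists of the $f_\alpha$ failing (a) or (b), and then do the generating-function count exactly as in the paper. The one place you overcomplicate is Step 2 and the ``main obstacle'': tight multiplicities are not needed here. Theorem \ref{submodule structure} asserts that the \emph{entire lattice} of submodules is generated by the fundamental ones, and since $\Rad\Delta_c(\Triv)$ is the unique maximal proper submodule (every proper submodule lies in the radical of the contravariant form), it is automatically the sum of all fundamental submodules — this is why the paper can say the basis claim ``follows immediately'' from that theorem. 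Moreover, the worry that the chain condition of Theorem \ref{tight thm} fails for $\ell=1$ rests on a misreading of the lexicographic order: the required chain is $(0,0)>(1,-1)>(0,-1)$, and $(1,-1)>(0,-1)$ does hold (equal second coordinates, $1>0$), so the block does have tight multiplicities — but, again, that fact is not what the corollary's proof rests on.
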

\begin{proof}
That the given set of $f_\alpha$'s give a basis of $L_c(\Triv)$ follows immediately from Theorem \ref{submodule structure}. In the case of $G(2,1,n)$ with the parameters $c_0=1/2n$ and $d_0=0=d_1$, this specializes to show that
\begin{equation} \label{Dtriv basis} \{f_\alpha \ | \ \alpha^-_{n+1}=0 \ \text{and} \ \alpha^-_{2n}-\alpha^-_1 \leq 2 \ \hbox{with equality implying $w_\alpha^{-1}(2n) > w_\alpha^{-1}(1)$}\}\end{equation} is a basis of $L=L_c(\Triv)$. Moreover, since $d_0=0$, the Cherednik algebra of type $G(2,2,n)$ is a subalgebra of that of type $G(2,1,n)$ and the module $L$ restricts to this subalgebra to give the spherical irreducible.

The condition $\alpha^-_{n+1}=0$ means that any $\alpha$ indexing a basis element of $L$ must have at least $n+1$ $0$'s, or equivalently, at most $n-1$ non-zero entries. In the presence of this constraint, the condition $\alpha^-_{2n}-\alpha^-_1 \leq 2$ implies that the largest entry in $\alpha$ is at most $2$, and finally the condition $w_\alpha^{-1}(2n) > w_\alpha^{-1}(1)$ if some $2$ appears means that all the zeros appear to the left of all the twos. Thus the top degree piece occurs in dimension $2(n-1)$, in accordance with the claimed graded dimension formula. By symmetry it suffices to prove that the dimension of each graded piece in degree at most $n-1$ is as claimed.

The set of possible $\alpha^-$'s that can arise is
$$\{(0^k, 1^\ell ,2^m \ | \ k \geq n+1 \ \text{and} \ k+\ell+m=2n \}.$$ Given such a $\alpha^-$, the conditions in \eqref{Dtriv basis} imply that choosing a $\alpha$ rearranging to $\alpha^-$ and corresponding to a basis element of $L$ is equivalent to choosing the position of the $\ell$ $1$'s, which may be done in exactly ${2n \choose \ell}$ ways. It follows that, for $d \leq n-1$, the dimension of the polynomial degree $d$ piece of $L$ is
$$\mathrm{dim}_\CC(L^d)=\sum_{\substack{0 \leq \ell \leq d \\ d-\ell \in 2 \ZZ}}  {2n \choose \ell}$$
in which the $\ell$'th summand comes from $\alpha$'s with $\ell$ $1$'s and $(d-\ell)/2$ $2$'s. This proves the type $G(2,2,2n)$ formula, and the type $G(2,1,2n)$ case is entirely analogous. \end{proof}

These formulas together with some elementary manipulatorics prove the conjectures of Oblomkov-Yun mentioned in the introduction.

The Oblomkov-Yun conjecture for $\dim L(B_{2n})$ concerns the Cherednik algebra of type $G(2,1,2n)$ at equal parameters $1/2n$; generalizing to the Cherednik algebra of $G(r,1,rn)$ at equal parameters $1/rn$, it turns out that for $r>1$ there is a natural dimension formula for $L(\Triv)$ that specializes to the dimension formula for $L(B_{2n})$ when $r=2$. Take equal parameters $c_0=c_1=...=c_{r-1}=1/rn$ for $H_c(G(r,1,rn))$. Let $\zeta$ be a primitive $r$'th root of $1$. Then for each $0\leq k\leq r-1$, 
$$d_k=\sum_{1\leq \ell\leq r-1}\zeta^{k\ell} c_\ell=\begin{cases}(r-1)/rn & \hbox{ if }k=0\\ -1/rn & \hbox{ if }1\leq k \leq r-1\end{cases}$$
We have $L(\Triv)=L((rn),\emp,...,\emp)$. The $r$-version of the dimension formula for $L(B_{2n})$ is straightforward to prove, using Theorem \ref{submodule structure} and generalizing the argument in Corollary \ref{dim formula}:
\begin{corollary} \label{rdim}
\begin{enumerate}
\item The radical of $\Del(\Triv)$ is the sum of the following $r$ submodules:
\begin{itemize}
\item For each $k=1,...,r-1$, there is a submodule $M_{b,k}$ where $b$ is the $kn$'th box of $\Triv$.
\item There is a single submodule of the form $M_{b_1,b_2,r}$ by taking $b_1$ and $b_2$ to be the rightmost and leftmost boxes, respectively, of $\Triv$.
\end{itemize}
\item $L(\Triv)$ has the basis
$$\{ f_\alpha \ | \ \alpha_{kn}^-\leq k-1 \ \hbox{for each} \ 1\leq k\leq r-1,\ \alpha_{rn}^-\leq r \ \hbox{with equality implying} \ w_\alpha^{-1}(rn)>w_\alpha^{-1}(1)\}$$
\item
\begin{align*}
\dim L(\Triv)=&\sum_{j_r=0}^n\sum_{j_{r-1}=0}^{n-j_r}{rn\choose j_{r-1}}\sum_{j_{r-2}=0}^{2n-j_r-j_{r-1}}{rn-j_{r-1}\choose j_{r-2}}\sum_{j_{r-3}=0}^{3n-j_r-j_{r-1}-j_{r-2}}{rn-j_{r-1}-j_{r-2}\choose j_{r-3}}...\\
&\qquad\qquad\qquad\qquad...\sum_{j_1}^{(r-1)n-j_r-...-j_2}{rn-j_{r-1}-...-j_2\choose j_1}
\end{align*}
\end{enumerate}
\end{corollary}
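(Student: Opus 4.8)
The plan is to extract all three parts from Theorem \ref{submodule structure} and the explicit $\ttt$-action on $\Del(\Triv)$, following the template of the proof of Corollary \ref{dim formula}. For part (1), first note that $c_0=1/rn$ has denominator exactly $rn$, hence is not a rational number of denominator at most $rn-1$, so by Lemma \ref{diag} the module $\Del(\Triv)$ is $\ttt$-diagonalizable and Theorem \ref{submodule structure} applies. I would then run through the two families of fundamental submodules for $\lambda=\Triv=((rn),\emp,\dots,\emp)$: this is a single row in component $0$ whose boxes have contents $0,1,\dots,rn-1$ and all have $\beta(b)=0$. Using $d_0=(r-1)/rn$ and $d_k=-1/rn$ for $1\leq k\leq r-1$ (reading subscripts of $d$ modulo $r$), one has $r\,c(b)=(r-1+r\,\ct(b))/rn$; substituting into the condition $k=r\,c(b)+\beta(b)-d_{\beta(b)-k}$ of case (1) gives $kn=\ct(b)+1$ when $k\not\equiv 0\pmod r$ (so $b$ is the $kn$'th box, and $1\leq k\leq r-1$) and the impossible $kn=\ct(b)\leq rn-1$ when $k\equiv 0\pmod r$; and in case (2) the condition forces $\beta(b_1)=\beta(b_2)=0$, hence $k\equiv 0\pmod r$ and $kn=\ct(b_1)-\ct(b_2)\pm 1\leq rn$, so $k=r$, the plus sign holds, $b_1$ is the rightmost box, and $b_2$ is the leftmost. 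This produces exactly the $r$ submodules in the statement. Each sits in strictly positive degree, since its lowest-degree space is an isotype $S^\mu$ with $\mu\neq\Triv$ by Corollary \ref{lowest degree subspace}, so each is proper; as the submodule lattice of $\Del(\Triv)$ is generated by the fundamental submodules, its largest proper submodule $\Rad\Del(\Triv)$ both contains each of them and is contained in their join, hence equals their sum.

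For part (2), I would unwind these fundamental submodules in the $\ttt$-eigenbasis $\{f_\alpha\mid\alpha\in\ZZ^{rn}_{\geq 0}\}$ of $\Del(\Triv)$: the $kn$'th box carries the label $kn$ in the unique standard tableau, so the submodule attached to it is $\CC\{f_\alpha\mid\alpha^-_{kn}\geq k\}$, while the case-(2) submodule is $\CC\{f_\alpha\mid\alpha^-_{rn}-\alpha^-_1\geq r,\text{ with equality implying }w_\alpha^{-1}(rn)<w_\alpha^{-1}(1)\}$. Each is the span of a subset of the basis, so $\Rad\Del(\Triv)$ is the span of the union of those subsets, and the complementary $f_\alpha$'s descend to a basis of $L(\Triv)$. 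Negating the defining conditions, and using that $\alpha^-_n\leq 0$ forces $\alpha^-_1=0$, this complement is exactly the set in (2): $\alpha^-_{kn}\leq k-1$ for $1\leq k\leq r-1$, and $\alpha^-_{rn}\leq r$ with equality implying $w_\alpha^{-1}(rn)>w_\alpha^{-1}(1)$.

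For part (3), I would count these basis vectors by sorting. Group the allowed $\alpha$ by the multiset of their entries, recorded as $(j_0,j_1,\dots,j_r)$ with $j_v=\#\{i:\alpha_i=v\}$ and $\sum_v j_v=rn$. The conditions of part (2) translate to: all entries lie in $\{0,\dots,r\}$, and $j_k+j_{k+1}+\cdots+j_r\leq(r-k)n$ for $1\leq k\leq r-1$ (which in particular forces $j_0\geq n$ and $j_r\leq n$), with the only further restriction being the tie-break, relevant only when $j_r\geq 1$. For a fixed such multiset with $j_r=0$ every rearrangement is allowed; for one with $j_r\geq 1$, the tie-break condition forces, exactly as in the proof of Corollary \ref{dim formula}, all entries equal to $0$ to precede all entries equal to $r$, so the rearrangement is determined by the placement of the entries valued in $\{1,\dots,r-1\}$. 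In both cases the number of allowed $\alpha$ with that multiset equals the uniform count $\binom{rn}{j_{r-1}}\binom{rn-j_{r-1}}{j_{r-2}}\cdots\binom{rn-j_{r-1}-\cdots-j_2}{j_1}$, and summing this over all $(j_1,\dots,j_r)$ satisfying the above inequalities, written as the successive bounds $j_r\leq n$, $j_{r-1}\leq n-j_r$, $j_{r-2}\leq 2n-j_r-j_{r-1}$, \dots, $j_1\leq(r-1)n-j_r-\cdots-j_2$, yields the stated formula.

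The conceptual work is entirely in the classification of fundamental submodules in part (1); parts (2) and (3) are bookkeeping. The step most likely to need care is the enumeration in part (3): pinning down the combinatorial meaning of the affine-type tie-break $w_\alpha^{-1}(rn)>w_\alpha^{-1}(1)$ (namely that, under the longest-element convention defining $w_\alpha$, it forces every entry equal to $0$ to precede every entry equal to $r$), and then verifying that the resulting sum reorganizes precisely into the claimed nested form with the correct summation ranges rather than an off-by-one variant.
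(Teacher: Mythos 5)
Your proposal is correct and is precisely the argument the paper has in mind: the paper gives no written proof of Corollary \ref{rdim}, stating only that it is ``straightforward to prove, using Theorem \ref{submodule structure} and generalizing the argument in Corollary \ref{dim formula},'' and your classification of the fundamental submodules (the case analysis on $k \bmod r$ using $d_0=(r-1)/rn$, $d_k=-1/rn$), the passage to the complementary set of $f_\alpha$'s, and the sorting/tie-break count all check out, including the reduction of $\alpha^-_{rn}-\alpha^-_1$ to $\alpha^-_{rn}$ via $\alpha^-_n\leq 0$ and the verification that the multisets with $j_r\geq 1$ and $j_r=0$ contribute the same nested binomial count.
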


\subsection{Coordinate rings of subspace arrangements} In many cases of interest, a quotient of the polynomial represnetation $\Delta_c(\Triv)$ may be identified with the coordinate ring of a subscheme of $V$, and information about the subscheme obtained from representation theory. For instance, the coordinate ring of the set of points in $\CC^n$ having at least $k$ coordinates equal to one another is the quotient of the polynomial representation of the type $A_n$ Cherednik algebra at parameter $c=1/k$ by its socle, which is a unitary representation. A conjectural BGG resolution (see \cite{BGS}) of this unitary representation is then a minimal resolution of the coordinate ring of the $k$-equals arrangement. The general principle allows the calculation of data of interest in commutative algebra in terms of the combinatorics used by representation theorists.

We can combine Theorem \ref{submodule structure} with Lemma 2.9 from \cite{BGS} to give a similar description of $L_c(\Triv)$ for types $G(2,1,2n)$ and $G(2,2,2n)$.
\begin{corollary}
The module $L_c(\Triv)$ for $G(2,2,2n)$ at parameter $c=1/2n$ is the coordinate ring of the scheme theoretic intersection of the set of points in $\CC^{2n}$ with $x_1^2=x_2^2=\cdots=x_{2n}^2$ with the set of points having at least $n+1$ coordinates equal to zero. Likewise, the module $L_c(\Triv)$ for $G(2,1,2n)$ at parameter $c=1/2n$ is the coordinate ring of the scheme theoretic intersection of the set of points in $\CC^{2n}$ with $x_1^2=x_2^2=\cdots=x_{2n}^2$ with the set of points having at least $n$ coordinates equal to zero.
\end{corollary}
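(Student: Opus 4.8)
The plan is to present $L_c(\Triv)$ as $\CC[V]/I$ with $I=\Rad\Delta_c(\Triv)$, to identify $I$ with $I_1+I_2$ where $I_1=(x_i^2-x_j^2:1\le i,j\le 2n)$ is the ideal of $\{x_1^2=\cdots=x_{2n}^2\}$ and $I_2$ is the (radical monomial) ideal of the locus where at least $m$ coordinates vanish ($m=n$ for $G(2,1,2n)$, $m=n+1$ for $G(2,2,2n)$), and then to recall that $I_1+I_2$ is by definition the ideal of the scheme-theoretic intersection. First I would note that $\CC[V]\subseteq H_c$ acts on $\Delta_c(\Triv)=\CC[V]$ by multiplication, so every submodule is an ideal, $L_c(\Triv)=\CC[V]/I$ carries a graded ring structure, and the claim is exactly $I=I_1+I_2$. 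By Corollary \ref{dim formula} and the reduction there of the $G(2,2,2n)$ case to the $G(2,1,2n)$ module at $d_0=d_1=0$, it suffices to work with $\Delta_c(\Triv)$ for $G(2,1,2n)$ at the relevant parameter; there Corollary \ref{dim formula} shows $I=M_1+M_2$ where, in the language of Theorem \ref{submodule structure}, $M_1=M_{b_1,b_2,2}$ with $b_1,b_2$ the rightmost and leftmost boxes of $\Triv$, and $M_2=M_{b,1}$ with $b$ the box of content $m-1$.

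To identify $M_2$ with $I_2$: the lowest-degree part of $M_2$ sits in polynomial degree $p:=2n-m+1$, and by Corollary \ref{lowest degree subspace}(a) it is the isotype $S^\mu$ with $\mu^0=(m-1)$, $\mu^1=(p)$; explicitly it is spanned by the $f_\alpha$ with $\alpha\in\{0,1\}^{2n}$, $|\alpha|=p$. Since $(1^p)$ is the minimum of dominance order these $f_\alpha$ have no lower-order terms, hence equal the squarefree monomials of degree $p$; they are singular, and they generate $M_2$ over $\CC[V]$. Thus $M_2$ is the ideal generated by all squarefree degree-$p$ monomials, which by Lemma 2.9 of \cite{BGS} (equivalently $\bigcap_{|S|=m}(x_i:i\in S)$) is precisely $I_2$. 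More generally, the comparison follows because the unitriangular change of basis $f_\alpha=x^\alpha+(\text{lower terms})$ can only increase $|\mathrm{supp}\,\alpha|$, so $\CC\{f_\alpha:|\mathrm{supp}\,\alpha|\ge p\}=\CC\{x^\beta:|\mathrm{supp}\,\beta|\ge p\}$.

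To identify $M_1$ with $I_1$: the lowest-degree part of $M_1=M_{b_1,b_2,2}$ lies in degree $2$ (the least $|\alpha|$ with $\alpha^-_{2n}-\alpha^-_{1}\ge 2$) and is $(2n-1)$-dimensional. It is a $W$-subrepresentation of $R_2=\Sym^2 V^*$; but for $W=G(2,1,2n)$ one has $R_2=\langle x_i^2\rangle\oplus\langle x_ix_j\rangle$ with $\langle x_i^2\rangle\cong S^{((2n),\emp)}\oplus S^{((2n-1,1),\emp)}$ and $\langle x_ix_j\rangle$ irreducible of dimension $\binom{2n}{2}$, so (for $n\ge 2$) the only $(2n-1)$-dimensional $W$-subrepresentation of $R_2$ is $S^{((2n-1,1),\emp)}=\{\sum a_i x_i^2:\sum a_i=0\}=\langle x_i^2-x_j^2\rangle$; this agrees with Corollary \ref{lowest degree subspace}(b) (plus sign), which predicts isotype $S^{((2n-1,1),\emp)}$. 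These elements are singular and generate $M_1$ over $\CC[V]$, so $M_1=I_1$. Hence $I=M_1+M_2=I_1+I_2$, the ideal of the scheme-theoretic intersection, which proves the statement. If one prefers not to invoke that each fundamental submodule is generated in its lowest degree, one can instead deduce $I\supseteq I_1+I_2$ directly from the singular vectors found above and then compare $\dim_\CC\CC[V]/(I_1+I_2)$---via the standard-monomial basis of Lemma 2.9 of \cite{BGS}---with $\dim_\CC L_c(\Triv)$ from Corollary \ref{dim formula}, so that the surjection $\CC[V]/(I_1+I_2)\twoheadrightarrow L_c(\Triv)$ is forced to be an isomorphism.

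The main obstacle I anticipate is the combinatorial bookkeeping: cleanly isolating $M_1$ and $M_2$ inside $\Rad\Delta_c(\Triv)$ and certifying that each is generated as a $\CC[V]$-module by its lowest-degree singular vectors---equivalently, matching the $f_\alpha$-basis of Corollary \ref{dim formula} with the standard-monomial basis of $\CC[V]/(I_1+I_2)$ supplied by Lemma 2.9 of \cite{BGS}. By comparison the representation-theoretic step (decomposing $\Sym^2 V^*$ to pin the degree-two singular vectors down to $\langle x_i^2-x_j^2\rangle$) and the identification of the squarefree-monomial ideal with the locus where at least $m$ coordinates vanish are routine.
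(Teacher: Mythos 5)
Your proposal is correct and follows essentially the same route as the paper: both identify $\Rad\Delta_c(\Triv)$ as the sum of the two fundamental submodules $M_{b_1,b_2,2}$ and $M_{b,1}$ of Theorem \ref{submodule structure} and match these with the two ideals via Lemma 2.9 of \cite{BGS}. The only (cosmetic) difference is the direction of the identification: the paper observes the two ideals are submodules and lets the classification of submodules force them to coincide with the fundamental ones, whereas you compute the lowest-degree singular vectors of each fundamental submodule directly (with a dimension-count fallback), which fills in details the paper leaves implicit.
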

\begin{proof}
In the $G(2,2,2n)$ case, Lemma 2.9 from \cite{BGS} implies that the two ideal of $x_1^2=\cdots=x_{2n}^2$ and the ideal of the set of points having at least $n+1$ coordinates equal to zero are both submodules. Now Theorem 4.4 implies that these must be equal to the submodules $M_{b_1,b_2,2}$ (where $b_1$ is the box of content $2n-1$ and $b_2$ is the box of content zero in $\Triv$) and $M_{b,1}$ (where $b$ is the box of content n), respsectively. Therefore the radical of $\Delta_c(\Triv)$ is the sum of these two ideals and the result follows. The case $G(2,1,2n)$ is analogous.
\end{proof}

\subsection{Jantzen filtration} In this subsection we explain how to explicitly describe the Jantzen filtration on $\Delta_c(\lambda)$ in case it is $\ttt$-diagonalizable. 

We will write $H(G,V)$ and $\Delta(\lambda)$ for the Cherednik algebra and its standard modules assuming the parameters $c$ are polynomial variables. The contravariant form $\la \cdot,\cdot \ra$ on $\Delta(\lambda)$ then takes value in the ring $A=\CC[c_r]_{r \in R}$ of polynomials in the parameters. Fixing a numerical parameter $c$, we write $m_c \subseteq A$ for the corresponding maximal ideal, so that we have quotient maps $H(G,V) \rightarrow H_c(G,V)=H(G,V)/m_cH(G,V)$ and $\pi:\Delta(\lambda) \rightarrow \Delta_c(\lambda)=\Delta(\lambda)/m_c \Delta(\lambda)$. 

Define a filtration on $\Delta(\lambda)$ by 
$$\Delta(\lambda)^{\geq d}=\{f \in \Delta(\lambda) \ | \ \la f,g \ra \in I_c^d \ \hbox{for all $g \in \Delta(\lambda)$.} \}.$$ The \emph{Jantzen filtration} on $\Delta_c(\lambda)$ is the image of this filtration by the quotient map,
$$\Delta_c(\lambda)^{\geq d}=\pi(\Delta(\lambda)^{\geq d}).$$ We note that we may also compute the Jantzen filtration by first localizing $A$, replacing it in the definitions by the local ring $A_c$ at $m_c$ before specializing to $A/m_c$. If $c$ is a parameter such that the polynomials $f_{\alpha,T}$ are well-defined at $c$, these may also be regarded as elements of the standard module localized at $m_c$.

The next lemma describes an explicit basis of the $d$th submodule in the Jantzen filtration in terms of the fundamental submodules defined following Theorem \ref{submodule structure}.
\begin{lemma} \label{Jantzen lemma}
Assume that $\Delta_c(\lambda)$ is $\ttt$-diagonalizable. We have
$$\Delta_c(\lambda)^{\geq d}=\CC \{f_{\alpha,T} \ | \ \hbox{$f_{\alpha,T}$ belongs to at least $d$ fundamental submodules.} \}$$
\end{lemma}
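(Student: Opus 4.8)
The plan is to make the Jantzen filtration completely explicit by diagonalizing the contravariant form in the $f$-basis. By the remark preceding the lemma we may replace $A$ by its localization $A_c$, or, better, restrict the parameters to a generic affine line $c(t)=c+tv$ through $c$ and work over the discrete valuation ring $R=\CC[t]_{(t)}$ with uniformizer $t$; the $m_c$-adic definition of $\Delta(\lambda)^{\geq d}$ agrees with the one coming from a generic such line, since the leading form of an element of $m_c^{d'}\setminus m_c^{d'+1}$ is a nonzero homogeneous polynomial and hence does not vanish at a generic $v$. Since $\Delta_c(\lambda)$ is $\ttt$-diagonalizable the elements $f_{\alpha,T}$ are defined at $c$, hence over $R$, and they form an $R$-basis of the lattice $\Delta(\lambda)_R$ whose reduction modulo $t$ is the usual basis of $\Delta_c(\lambda)$.

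First I would show that the Gram matrix of the contravariant form in the basis $\{f_{\alpha,T}\}$ is diagonal over $R$. Over the fraction field of $A$ the standard module is $\ttt$-semisimple with pairwise distinct eigenvalues, and the orthogonal functions $f_{\alpha,T}$ are mutually orthogonal for the contravariant form; this is the generic case of the construction and norm computation in \cite{Gri} (see also \cite{Gri2}). Because the $f_{\alpha,T}$ lie in the lattice $\Delta(\lambda)_R$, each pairing $\langle f_{\alpha,T},f_{\beta,S}\rangle$ lies in $R$; an off-diagonal pairing then lies in $R$ and vanishes over $\mathrm{Frac}(A)$, hence is zero. So the form is diagonal: $\langle f_{\alpha,T},f_{\beta,S}\rangle=0$ unless $(\alpha,T)=(\beta,S)$, and we set $n_{\alpha,T}=\langle f_{\alpha,T},f_{\alpha,T}\rangle\in R$.

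With the Gram matrix diagonal the filtration is immediate to read off. Writing $f=\sum a_{\alpha,T}f_{\alpha,T}$ we have $\langle f,f_{\beta,S}\rangle=a_{\beta,S}\,n_{\beta,S}$, so $f\in\Delta(\lambda)_R^{\geq d}$ if and only if $v_t(a_{\beta,S})\geq d-v_t(n_{\beta,S})$ for every $(\beta,S)$. Hence $\Delta(\lambda)_R^{\geq d}=\bigoplus_{\alpha,T} t^{\max(0,\,d-v_t(n_{\alpha,T}))}R\,f_{\alpha,T}$, and reduction modulo $t$ kills exactly the summands with $v_t(n_{\alpha,T})<d$ (whose generators are divisible by $t$) while sending those with $v_t(n_{\alpha,T})\geq d$ to the corresponding basis vector of $\Delta_c(\lambda)$; therefore
\[
\Delta_c(\lambda)^{\geq d}=\CC\{f_{\alpha,T}\mid v_t(n_{\alpha,T})\geq d\}.
\]

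It remains to identify $v_t(n_{\alpha,T})$ with the number of fundamental submodules containing $f_{\alpha,T}$, and this is the step I expect to be the main obstacle. The norm $n_{\alpha,T}$ is given by the explicit product formula of \cite{Gri}: it is a product of linear forms in the parameters, one factor for each ``step'' needed to build $f_{\alpha,T}$ from the seed $v_T$ via the intertwining operators (together with the factors of $\langle v_T,v_T\rangle$, which are units at $c$). I would match this product against Theorem \ref{submodule structure}: a factor of $n_{\alpha,T}$ vanishes at $c$ precisely when the parameter equation $k=rc(b)+\beta(b)-d_{\beta(b)-k}$ (resp. $k=rc(b_1)+\beta(b_1)-rc(b_2)-\beta(b_2)\pm rc_0$) holds for the box(es) and integer $k$ attached to that step, while the combinatorial constraint on $\alpha$ recorded by the same step is exactly the membership condition for $M_{b,k}$ (resp. for $M_{b_1,b_2,k}$); thus the vanishing factors of $n_{\alpha,T}$ are in bijection with the fundamental submodules of Theorem \ref{submodule structure} that contain $f_{\alpha,T}$. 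The remaining points, all of which use $\ttt$-diagonalizability, are that no denominator factor of the norm formula vanishes at $c$ (so there is no cancellation) and that, for generic $v$, each vanishing linear factor has $t$-valuation exactly $1$; together these give $v_t(n_{\alpha,T})=\#\{\text{fundamental submodules containing }f_{\alpha,T}\}$ and complete the proof.
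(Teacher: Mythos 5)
Your proposal is essentially the paper's own argument: reduce to an orthogonal (diagonal Gram matrix) basis of $f_{\alpha,T}$'s over the localized parameter ring, read off the Jantzen layers from the valuations of the norms $\langle f_{\alpha,T},f_{\alpha,T}\rangle$, and then identify the order of vanishing of each norm at $c$ with the number of fundamental submodules containing $f_{\alpha,T}$ via the product/recursion formula for norms in \cite{Gri} (checking that denominators do not vanish at $c$). The step you flag as the main obstacle is exactly the step the paper also handles only by appeal to those recursions, phrased as ``the numerator acquires precisely one zero every time we enter some fundamental submodule''; your passage to a generic line through $c$ is a harmless repackaging of the paper's direct use of the $m_c$-adic order in the regular local ring $A_c$.
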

\begin{proof}
Each $f_{\alpha,T} \in A_c \otimes_A \Delta(\lambda)$ may be constructed by applying the intertwining operators $\sigma_i$ and $\Phi$ in some sequence (depending on $(\alpha,T))$ to a certain basis element $w_0 v_T \in S^\lambda$, in such a way that once we enter a fundamental submodule we never leave it. The norm $\la f_{\alpha,T},f_{\alpha,T} \ra$ is a rational function of $c$ which may be written as a product of linear factors divided by another product of linear factors using the recursions in the proof of Theorem 6.1 from \cite{Gri}. Examining these recursions shows that the denominators are never zero at $c$, and the numerator acquires precisely one zero every time we enter some fundamental submodule.
\end{proof}

\subsection{Tableaux indexation for $f_{\alpha,T}$} Given $T \in \mathrm{SYT}(\lambda)$ and $\alpha \in \ZZ_{\geq 0}^n$, define fillings $P$ and $Q$ of the boxes of $\lambda$ by the rules
$$P(b)=w_\alpha^{-1} (T(b)) \quad \text{and} \quad Q(b)=\alpha_{P(b)} \quad \hbox{for all $b \in \lambda$.}$$ We may recover $\alpha$ and $T$ from $P$ and $Q$ by the rules
$$\alpha_i=Q(P^{-1}(i)) \quad \text{and} \quad T(b)=w_\alpha(P(b)).$$ These mappings define inverse bijections between $\ZZ_{\geq 0}^n \times \mathrm{SYT}(\lambda)$ and the set of pairs $(P,Q)$, where $P$ is a bijection from the boxes of $\lambda$ to the set $\{1,2,\dots,n\}$, $Q$ is a filling of the boxes of $\lambda$ by non-negative integers, weakly increasing left to right and top to bottom, and we have
$$P(b) > P(b') \quad \hbox{whenever $b < b'$ and $Q(b)=Q(b')$.}$$ Here we have defined an ordering on the boxes by $b< b'$ if $T(b) < T(b')$ for all standard Young tableaux $T$ (in other words, if $b$ is up and to the left of $b'$ in the same component of $\lambda$). We will write $\gimel(\lambda)$ for this set of pairs $(P,Q)$; it is identified via the above bijections with the set of vertices of the \emph{calibration graph} defined in \cite{Gri}.

\subsection{Indexation of the irreducibles in the principal block}\label{indexation} Given $0 \leq i \leq r-1$ and a box $b \in (n)$ for which the equation
$$(d_i-i)/r=d_0/r+\mathrm{ct}(b) c_0+k_i$$ holds for some integer $k_i$, we put $b_i=b$. Note that since the denominator of $c_0$ is $n$, the box $b$ and the integer $k_i$ are uniquely determined by $i$ if they exist. This produces a list of boxes $b_i$, one for each $0\leq i \leq r-1$ for which the equation has a solution, each such box being accompanied by the pair of integers $(i,k_i)$. Notice that we may well have $b_i=b_j$ for some $i \neq j$. We can record these data as in the introduction, by placing a label $(i,k_i)$ in the box $b_i$. We repeat the example from the introduction here:
$$
\ytableausetup{mathmode, boxsize=3.5em}
\begin{ytableau}
\begin{matrix} (0,0) \\ (1,-2) \end{matrix} & (3,-1) & & & & \begin{matrix} (5,3) \\ (2,-2) \end{matrix} &  & 
\end{ytableau}
$$
Given a multipartition $\lambda$ with $w_c(\lambda)=w_c(\Triv)$ we define a pair of tableaux on $\Triv$: first, we observe that since the denominator of $c_0$ is exactly $n$, there is a unique box $\overline{b}$ of $\lambda$ for each box $b$ of $\Triv$ satisfying
$$c(\overline{b})=c(b) \ \mathrm{mod} \ \ZZ.$$ We then define two fillings of the boxes of $(n)$ as follows: for the first, we put $S(b)=\beta(\overline{b})$ in the box $b$, and for the second, we place the integer $T(b)$ defined by the equation
$$c(\overline{b})=c(b)+T(b)$$ in the box $b$. We write $S(\lambda)$ and $T(\lambda)$ for these tableaux. It is not difficult to see that $T(\lambda)$ is uniquely determined from $S(\lambda)$ and the list $(b_i,k_i)$ unless $S(\lambda)$ contains only one integer $i$ (that is, unless $\lambda^i=\emptyset$ for all but one index $i$). 

This defines a bijection from the set of $r$-partitions $\lambda$ with $w_c(\lambda)=w_c(\Triv)$ onto the set of pairs $(S,T)$ of tableaux on $(n)$ with the following properties. First, we identify the right-hand border of the last box of $(n)$ with the left-hand border of the first box of $(n)$ so that the set of boxes becomes circular. Then the set of boxes $b$ with $S(b)=i$ is an interval (in this circular version of $(n)$) containing the  box $b_i$ labeled $(i,k_i)$ (and empty if there is no box $b_i$), we have $T(b_i)=k_i$, and $T(b)=k_i$ or $T(b)=k_i \pm \ell$ for all boxes $b$ with $S(b)=i$, with $T(b)=k_i+\ell$ if when traveling left to right from $b_i$ to $b$ we cross from the $n$th box to the $1$st.

For the example of the block labeling given above and in the introduction and with $c_0=3/8$, here are possible pairs $(S,T)$:
$$
S=\ytableausetup{mathmode, boxsize=2em}
\begin{ytableau}
 0 & 0 & 5 & 5 & 5 & 5 & 5 & 0
\end{ytableau}
$$ 
$$
T=\ytableausetup{mathmode, boxsize=2em}
\begin{ytableau}
 0 & 0 & 3 & 3 & 3 & 3 & 3 & -3
\end{ytableau}
$$ corresponds to
$$
\lambda=\left(\ytableausetup{mathmode, boxsize=1em}
\begin{ytableau} \hfil & \hfil \\ \hfil \end{ytableau}, \emptyset,\emptyset,\emptyset,\emptyset, \begin{ytableau} \hfil & \hfil \\ \hfil \\ \hfil \\ \hfil  \end{ytableau}\right)
$$ while
$$
S=\ytableausetup{mathmode, boxsize=2em}
\begin{ytableau}
 2 & 3 & 3 & 3 & 2 & 2 & 2 & 2
\end{ytableau}
$$ 
$$
T=\ytableausetup{mathmode, boxsize=2em}
\begin{ytableau}
 1 & -1 & -1 & -1 & -2 & -2 & -2 & -2
\end{ytableau}
$$ corresponds to
$$
\lambda=\left(\ytableausetup{mathmode, boxsize=1em}
 \emptyset,\emptyset,\begin{ytableau} \hfil & \hfil & \hfil &Â \hfil \\ \hfil \end{ytableau}, \begin{ytableau} \hfil & \hfil & \hfil  \end{ytableau} ,\emptyset,\emptyset \right)
$$

\subsection{Submodules and maps} \label{submodules}

In terms of the previous parametrization, the submodule structure is determined using Theorem \ref{submodule structure} as follows: there is a submodule of the form $M_{b,k}$ whenever $b$ is labeled by $(i,k_i)$ in the block tableau and $S(b)=j \neq i$ while $(j,T(b))>(i,k_i)$, with $k=r(T(b)-k_i)+j-i$. There is a submodule of the form $M_{b_1,b_2,k}$ for each pair of adjacent boxes $b_1,b_2$ with  $(S(b_1),T(b_1))>(S(b_2),T(b_2))$ where $k=r(T(b_1)-T(b_2))+S(b_1)-S(b_2)$.  In addition, if $b_1$ is the last box of $(n)$ and $b_2$ is the first box, and we have $(S(b_1),T(b_1)) > (S(b_2),T(b_2)-\ell)$ then $M_{b_1,b_2,k}$ is a submodule with $k=r(T(b_1)-T(b_2)+\ell)+S(b_1)-S(b_2)$.

Thus for the first pair $(S,T)$ above, and labeling the boxes as follows
$$
\ytableausetup{mathmode, boxsize=2em}
\begin{ytableau}
 b_3 & b_2 & b_1 & \hfil & \hfil & b_6 & b_4 & b_5
\end{ytableau}
$$ we have submodules $M_{b_1,b_2,23}$, $M_{b_3,11}$, $M_{b_4,b_5,41}$, and $M_{b_6,33}$. 


\subsection{Tight multiplicities}\label{tight subsection}

Given pairs of integers $(i_1,k_1)$ and $(i_2,k_2)$ we write $(i_1,k_1)>(i_2,k_2)$ if $k_1 > k_2$ or $k_1=k_2$ and $i_1>i_2$. 

\begin{theorem}\label{tight thm} Suppose $c_0=\ell/n$ for a positive integer $\ell$ coprime to $n$. Then the principal block has tight multiplicities if and only if the following condition holds: read from left to right, and breaking ties between labels in the same box by reading $(i,k)$ before $(j,k')$ if $(i,k)>(j,k')$, in the block tableau, if the labels are $((i_1,k_1),\dots,(i_s,k_s)$ then we have
$$(i_1,k_1) > (i_2,k_2)> \cdots > (i_s,k_s) > (i_1,k_1-\ell).$$
\end{theorem}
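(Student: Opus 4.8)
The plan is to prove the ``if'' direction via Corollary \ref{principal tight corollary} and the ``only if'' direction by exhibiting an explicit standard module without tight multiplicities. For the ``if'' direction one first checks that every standard module in the principal block is $\ttt$-diagonalizable: since $c_0=\ell/n$ with $\gcd(\ell,n)=1$, the charged contents $d_0/r+jc_0$ of the $n$ boxes of $(n)$ (for $0\le j\le n-1$) are pairwise incongruent modulo $\ZZ$, so by Corollary \ref{block} every $\lambda$ in the principal block has $n$ boxes with pairwise distinct $c$-weights, and Lemma \ref{diag} applies. It then suffices to verify the remaining hypothesis of Corollary \ref{principal tight corollary}: for every $\lambda$ in the block and every fundamental submodule $M$ of $\Del_c(\lambda)$, if the lowest degree (necessarily singular) subspace of $M$ lies inside a fundamental submodule $N$ of type $(2)$, then $M\subseteq N$.

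The heart of the argument is to translate this containment into the combinatorics of the pairs $(S(\lambda),T(\lambda))$ from Section \ref{submodules}. Corollary \ref{lowest degree subspace} describes the lowest degree isotype $S^\nu$ of each fundamental submodule of $\Del_c(\lambda)$ as the result of moving a vertical or horizontal strip of boxes between two components of $\lambda$; reading off $(S(\nu),T(\nu))$ from this, a type-$(2)$ submodule $M_{b_1,b_2,k'}$ containing $S^\nu$ corresponds to an adjacent (or wrap-around) descent of $(S(\nu),T(\nu))$ in the cyclic reading of $(n)$. Matching the parameters $k=r(T(b)-k_i)+j-i$ and $k'=r(T(b_1)-T(b_2))+S(b_1)-S(b_2)$, and chasing which box is moved where, one finds that the inclusion $M\subseteq N$ can fail precisely when the block-tableau labels, read left to right with the stated tie-break as $((i_1,k_1),\dots,(i_s,k_s))$, contain an inversion: either $(i_a,k_a)\not>(i_{a+1},k_{a+1})$ for some $1\le a<s$, or $(i_s,k_s)\not>(i_1,k_1-\ell)$, the latter arising from the wrap-around type-$(2)$ submodule already visible for $\Del_c(\Triv)$ in Corollary \ref{dim formula}. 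Thus if the full chain $(i_1,k_1)>\cdots>(i_s,k_s)>(i_1,k_1-\ell)$ holds, the hypothesis of Corollary \ref{principal tight corollary} is satisfied and the principal block has tight multiplicities.

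For the converse, suppose the chain condition fails at some step. Using the two offending labels and the boxes of $(n)$ carrying them, we build a multipartition $\lambda$ in the principal block — choosing $(S(\lambda),T(\lambda))$ so the two boxes sit in distinct components arranged to realize the inversion — such that $\Del_c(\lambda)$ has a type-$(1)$ fundamental submodule $M=M_{b,k}$ whose lowest degree isotype $S^\nu$ embeds into a type-$(2)$ submodule $N$ with $M\not\subseteq N$. Using the $f_{\alpha,T}$-basis and the eigenvalue formulas behind Theorem \ref{submodule structure}, together with BGG reciprocity \eqref{BGG}, one then checks that a composition factor $L(\nu)$ of $\Del_c(\lambda)$ occurs with multiplicity strictly greater than $\dim\Hom(\Del_c(\nu),\Del_c(\lambda))$, so that condition (c) of Theorem \ref{tight 1} fails for the block.

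The main obstacle is the combinatorial translation of the middle paragraph: establishing, uniformly over all $\lambda$ in the block, the exact equivalence between the geometric condition ``the lowest degree isotype of one fundamental submodule is contained in another'' and the single cyclic chain condition on the block labels, which requires careful tracking of the strip-moving operations of Corollary \ref{lowest degree subspace} and of the integers $k$; and, in the converse direction, controlling the full submodule lattice of the witness module $\Del_c(\lambda)$ — not just its fundamental submodules — in order to pin down the failing composition multiplicity.
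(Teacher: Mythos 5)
Your ``if'' direction follows the paper's route exactly: diagonalizability via Lemma \ref{diag}, then verification of the hypothesis of Corollary \ref{principal tight corollary} by translating fundamental submodules into the $(S,T)$-combinatorics of \ref{submodules}. The combinatorial check you flag as the main obstacle is genuinely what the paper does (it reduces to two observations: a type-(1) and a type-(2) submodule living in the same component must have $k<k'$, and no two type-(2) submodules start in the same component), so that half is sound in outline.

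The genuine gap is in the converse. Your witness is the right one — the paper takes $\lambda$ with a single nonempty component $(n)$ and produces a type-(1) or type-(2) submodule whose lowest degree piece lies in $M_{b_1,b_2,r\ell}$ without the submodule itself being contained in it — but you have no valid mechanism for passing from this configuration to a failure of tight multiplicities. BGG reciprocity \eqref{BGG} does not do this job and is not used in the paper. The missing ingredient is a counting argument resting on Theorem 7.1 of \cite{Gri}: the composition length of $\Del_c(\lambda)$ equals the number of \emph{distinct intersections} of fundamental submodules, while the total dimension $\sum_\nu\dim\Hom(\Del_c(\nu),\Del_c(\lambda))$ equals the number of irreducible constituents of the space of singular vectors; the assignment $S\mapsto\CC[V]S$ injects the latter set into the former, and each $\CC[V]S$ is itself an intersection of fundamental submodules. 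Hence tight multiplicities forces distinct intersections to have distinct lowest degree pieces. Your witness produces two distinct intersections, $M$ and $M\cap N$, with the \emph{same} lowest degree piece, and that is the contradiction. Without identifying this counting step (or some substitute for it), the assertion that ``a composition factor occurs with multiplicity strictly greater than the Hom dimension'' is just a restatement of what must be proved.
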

\begin{proof}
Suppose first that the condition 
$$(i_1,k_1) > (i_2,k_2)> \cdots > (i_s,k_s) > (i_1,k_1-\ell)$$ on the labels read left to right holds.  It then  follows from the translation of Theorem \ref{submodule structure} in \ref{submodules} that:
\begin{itemize}
\item[(1)] Whenever submodules $M_{b,k}$ and $M_{b_1,b_2,k'}$ both appear with $\beta(b)=\beta(b')$, then we have $k<k'$ and hence the lowest degree space of $M_{b,k}$ is not contained in $M_{b_1,b_2,k'}$. 
\item[(2)] Whenever submodules $M_{b_1,b_2,k}$ and $M_{b_1',b_2',k'}$ appear, we have $\beta(b_1) \neq \beta(b_1')$. 
\end{itemize}
Now these conditions imply that the hypotheses of Corollary \ref{principal tight corollary} are satisfied, so the radical of $\Delta_c(\lambda)$ is generated by singular vectors. 

The case in which $S(b)=i$ for all $b$ is similar but easier, as only one submodule of the form $M_{b_1,b_2,k}$ can appear. This proves the sufficiency of our condition.

If a standard module $\Delta_c(\lambda)$ has tight multiplicities, then its composition length is equal to the number of irreducible $G(r,1,n)$-modules, counted with multiplicities, appearing in the space of singular vectors. On the other hand, this composition length is, by Theorem 7.1 of \cite{Gri}, equal to the number of distinct intersections of the submodules of types (1) and (2) appearing in Theorem \ref{submodule structure}. Now we have a map from irreducible $G(r,1,n)$-submodules of the space of singular vectors to submodules of $\Delta_c(\lambda)$ given by $S \mapsto \CC[V] S$, and evidently $\CC[V] S$ has lowest degree subspace $S$ so this map is an injection. Again using Theorem \ref{submodule structure}, by irreducibility of $S$ the submodule $\CC[V] S$ of $\Delta_c(\lambda)$ must be equal to an intersection of submodules of the forms (1) and (2). It now follows by comparing cardinalities that if $\Delta_c(\lambda)$ has tight multiplicities, then every such intersection is of the form $\CC[V] S$ for some irreducible $G(r,1,n)$-submodule $S$ of the space of singular vectors, and that distinct intersections have different lowest degree parts $S$.

We first suppose that some label $(i,k)$ appearing in the block tableau has $(i,k)<(i_1,k_1-\ell)$. If the box $b$ has the label $(i,k)$, then setting $\lambda$ equal to the multiparition with $\lambda^i=(n)$ and all other $\lambda^j$ empty, the standard module $\Delta_c(\lambda)$ has a submodule $M_{b,r(k_1-k)+(i_1-i)}$ with 
$$r(k_1-k)+(i_1-i) > r \ell.$$ If $b_1$ is the last box of $(n)$ and $b_1$ is the first, then $\Delta_c(\lambda)$ also has the submodule $M_{b_1,b_2,r \ell}$. But the lowest degree piece of $M_{b,r(k_1-k)+(i_1-i)}$ is contained in $M_{b_1,b_2,r \ell}$ without having a containment of
$M_{b,r(k_1-k)+(i_1-i)}$ in $M_{b_1,b_2,r \ell}$, so the distinct intersections $M_{b,r(k_1-k)+(i_1-i)}$ and $M_{b,r(k_1-k)+(i_1-i)} \cap M_{b_1,b_2,r \ell}$ have the same lowest degree space. By the argument of the previous paragraph $\Delta_c(\lambda)$ does not have tight multiplicities. 

Now suppose that there are labels $(i,k)$ and $(j,m)$ with $(i,k) < (j,m)$ and $(i,k)$ appearing strictly to the left of $(j,m)$ in the block tableau. Let $\lambda$ be the multipartition with $\lambda^j=(n)$ and all other components empty. Let $b$ be the box of $\lambda^j$ determined by $c(b)=c(b')$ mod $\ZZ$, where $b'$ is the box of $(n)$ labeled by $(i,k)$, let $b_1$ be the rightmost box of $\lambda^j$ and let $b_2$ be the leftmost box. We then have submodules 
$$M=M_{b_1,b_2,r \ell} \quad \text{and} \quad N=M_{b,r(m-k+\ell)+j-i}$$ with the lowest degree piece of $N$ contained in $M$ but $N$ not contained in $M$. We conclude as in the previous paragraph.
 
\end{proof}

\subsection{The quiver of primitive homs and tight multiplicities}
We have the quiver $Q_{\mathrm{prim}}(B_0)$ for the principal block $B_0$ of  $\OO_c(G(r,1,n))$ at $c_0=\ell/n$: it has vertices $\lambda\in\Lambda$ and an arrow $\mu\rightarrow\lambda$ if $\dim\Hom_{\mathrm{prim}}(\Del(\mu),\Del(\lambda))=1$. (Recall that all Hom spaces are $0$ or $1$-dimensional in $B_0$). We will state a criterion in terms of quivers for detecting whether $B_0$ has tight multiplicities.

We will need a graded version of BGG reciprocity to hold for $B_0$.
\begin{lemma}\label{gr bgg} Let $\lambda,\mu\in\OO_c(G(r,1,n))$. Then:
$$[P(\lambda):L(\mu)](v)=\sum_{\sigma\in\Lambda}[P(\lambda):\Del(\sigma)](v)[\Del(\sigma):L(\mu)](v)=\sum_{\sigma\in\Lambda}[\Del(\sigma):L(\lambda)](v)[\Del(\sigma):L(\mu)](v)$$
\end{lemma}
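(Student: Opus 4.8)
The plan is to run the standard derivation of graded BGG reciprocity from a graded highest weight structure, reading the first equality as graded transitivity of composition multiplicities along a standard filtration and the second as graded BGG reciprocity itself. Since all three of $[P(\lambda):L(\mu)]$, $[P(\lambda):\Del(\sigma)]$, $[\Del(\sigma):L(\mu)]$ vanish unless the relevant objects lie in one block, we may argue one block at a time. Two structural inputs are needed. (i) Each block of $\OO_c(G(r,1,n))$ admits a graded lift which is a graded highest weight category, with graded standard, costandard, simple and indecomposable projective objects $\Del(\sigma)$, $\nabla(\sigma)$, $L(\mu)$, $P(\lambda)$, normalised so that the heads of $L(\mu)$, $\Del(\sigma)$, $P(\lambda)$ and the socle of $\nabla(\sigma)$ lie in degree $0$, and in which every $P(\lambda)$ has a graded $\Del$-flag; this is available through the equivalences of $\OO_c(G(r,1,n))$ with a block of affine parabolic category $\OO$, resp.\ with modules over a cyclotomic $q$-Schur algebra, equipped with their geometric, resp.\ cyclotomic KLR, grading (\cite{RSVV}, \cite{Lo}, \cite{We}). (ii) The contravariant-form duality $\delta$ of \cite{GGOR} lifts to a contravariant graded self-equivalence that reverses the grading, fixes every graded simple $L(\mu)$, and interchanges $\Del(\sigma)\leftrightarrow\nabla(\sigma)$.

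Granting (i) and (ii), the first equality is purely formal. A graded $\Del$-flag of $P(\lambda)$ gives, in the Grothendieck group of graded modules over the block, which is free over $\ZZ[v^{\pm 1}]$ on the classes $[L(\mu)]$ of the graded simples (with $v$ recording the grading shift), the identity $[P(\lambda)] = \sum_\sigma [P(\lambda):\Del(\sigma)](v)\,[\Del(\sigma)]$; substituting $[\Del(\sigma)] = \sum_\mu [\Del(\sigma):L(\mu)](v)\,[L(\mu)]$ and extracting the coefficient of $[L(\mu)]$ yields
$$[P(\lambda):L(\mu)](v) = \sum_\sigma [P(\lambda):\Del(\sigma)](v)\,[\Del(\sigma):L(\mu)](v).$$

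For the second equality I would prove graded BGG reciprocity, $[P(\lambda):\Del(\sigma)](v) = [\Del(\sigma):L(\lambda)](v)$, by the usual three steps. First, $\Ext^{>0}(\Del(\tau),\nabla(\sigma)) = 0$ in every internal degree and $\Hom(\Del(\tau)\langle e\rangle,\nabla(\sigma)\langle d\rangle) = \delta_{\tau\sigma}\delta_{ed}\CC$, so $\Hom(-,\nabla(\sigma))$ is exact on $\Del$-flagged modules and the graded dimension of $\Hom(P(\lambda),\nabla(\sigma))$ equals $[P(\lambda):\Del(\sigma)](v)$. Second, as $P(\lambda)$ is the graded projective cover of $L(\lambda)$, the same graded dimension computes the graded multiplicity of $L(\lambda)$ in $\nabla(\sigma)$. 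Third, applying $\delta$ — grading-reversing, fixing $L(\lambda)$ and carrying $\nabla(\sigma)$ to $\Del(\sigma)$ — identifies this with the graded multiplicity of $L(\lambda)$ in $\Del(\sigma)$, namely $[\Del(\sigma):L(\lambda)](v)$ (here one tracks the $v\mapsto v^{-1}$ bookkeeping forced by the grading reversal and checks that it cancels against the normalisation of $\nabla(\sigma)$). Substituting into the first equality gives the second and completes the proof.

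The homological steps above are routine in any graded highest weight category with a grading-reversing duality fixing simples; the real content — and the main obstacle — is inputs (i) and (ii), in particular checking that the contravariant-form duality of \cite{GGOR} is compatible with a chosen graded lift (that it reverses the grading and fixes graded simples on the nose) and fixing the normalisations of $\Del(\sigma)$, $\nabla(\sigma)$ so that the three graded multiplicities compose with no stray power of $v$. One could alternatively hope to realise the grading filtration on standards as the Jantzen filtration, which is explicit in our blocks by Lemma \ref{Jantzen lemma}, but one would still have to show this filtration underlies a graded lift. This compatibility question is presumably the point on which the authors consulted the experts acknowledged in the introduction.
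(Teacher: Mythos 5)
Your proposal is correct in substance and follows the same basic strategy as the paper: both reduce the lemma to the existence of a graded lift of the block supplied by the equivalence with type $A$ affine parabolic category $\OO$ of \cite{RSVV}, after which the two equalities are the formal graded $\Del$-flag expansion and graded BGG reciprocity. The difference lies in how the grading is pinned down. You work with an abstract graded highest weight lift plus a grading-reversing duality fixing simples, and you candidly flag input (ii) --- the compatibility of the contravariant duality of \cite{GGOR} with the chosen grading and the normalisation bookkeeping --- as the unresolved obstacle. The paper discharges exactly this point differently: it invokes the Koszulity of the graded lift from \cite{RSVV} (for a Koszul graded highest weight category, graded BGG reciprocity is standard and one does not need to lift $\delta$ by hand), and then uses the Beilinson--Bernstein argument together with Proposition 1.8 of \cite{Shan} to identify the grading filtration on standards with the Jantzen filtration, transported through the equivalence. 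This last identification is not merely the ``alternative'' you mention in passing; it is what makes the graded multiplicities $[\Del(\sigma):L(\mu)](v)$ appearing in the lemma coincide with the concrete ones computed via Lemma \ref{Jantzen lemma} and used in Theorem \ref{graded dec}, so any complete proof must include it. In short: your formal derivation is fine and slightly more explicit than the paper's one-line ``this implies the statement,'' but the duality-compatibility gap you leave open is precisely the content that the paper's appeal to Koszulity and to the Jantzen-filtration comparison is designed to cover, so you should replace your input (ii) by those citations rather than leave it as an open normalisation check.
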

\begin{proof}
There is an equivalence of $\OO_c(G(r,1,n))$ with a block of type $A$ affine parabolic category $\OO$ \cite{RSVV}. A graded version of the latter category is Koszul \cite{RSVV}. The argument of Beilinson-Bernstein \cite{BB} implies that the graded lift of a standard object coincides with the Jantzen filtration on that standard. Proposition 1.8 of \cite{Shan} implies that the Jantzen filtration on a standard in $\OO_c(G(r,1,n))$ goes to the Jantzen filtration on a standard in the type $A$ affine parabolic category $\OO$ under the equivalence of \cite{RSVV}. This implies the statement.
\end{proof}

\begin{theorem}\label{quiver theorem}
Suppose $B$ is a block of a highest weight category with finitely many simple objects, which is BGG and satisfies a graded BGG reciprocity rule as in Lemma \ref{gr bgg}. Then $B$ has tight multiplicities if and only if the $\Ext^1$ quiver $Q_{\Ext^1}(B)$ is the double of the quiver $Q_{\mathrm{prim}}(B)$.
\end{theorem}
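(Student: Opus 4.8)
The plan is to reduce both statements to the single combinatorial identity
$$\dim\Hom_{\mathrm{prim}}(\Del(\mu),\Del(\lambda))=[\rad_1\Del(\lambda):L(\mu)]\qquad\text{for all }\mu<\lambda,$$
where $\rad_1\Del(\lambda)=\rad\Del(\lambda)/\rad^2\Del(\lambda)$ is the head of the radical. First I would dispose of the diagonal: the identity map shows $\dim\Hom_{\mathrm{prim}}(\Del(\lambda),\Del(\lambda))=1$, so both quivers are understood to be loop--free, and a nonzero primitive homomorphism $\Del(\mu)\to\Del(\lambda)$ forces $\mu<\lambda$ in the highest weight order; hence for $\mu\neq\lambda$ at most one of the numbers $\dim\Hom_{\mathrm{prim}}(\Del(\mu),\Del(\lambda))$, $\dim\Hom_{\mathrm{prim}}(\Del(\lambda),\Del(\mu))$ is nonzero. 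Since $B$ is BGG we have $\dim\Ext^1(L(\lambda),L(\mu))=\dim\Ext^1(L(\mu),L(\lambda))$, so ``$Q_{\Ext^1}(B)$ is the double of $Q_{\mathrm{prim}}(B)$'' is equivalent to $\dim\Ext^1(L(\lambda),L(\mu))=\dim\Hom_{\mathrm{prim}}(\Del(\mu),\Del(\lambda))$ for all $\mu<\lambda$. It remains to identify the left side: applying $\Hom(-,L(\mu))$ to $0\to\rad\Del(\lambda)\to\Del(\lambda)\to L(\lambda)\to 0$ and using the standard vanishing $\Ext^i(\Del(\lambda),L(\mu))=0$ for all $i\geq 0$ when $\mu<\lambda$ (take a projective resolution of $\Del(\lambda)$ beginning with $P(\lambda)$ and continuing with sums of $P(\nu)$, $\nu>\lambda$) gives $\dim\Ext^1(L(\lambda),L(\mu))=[\rad_1\Del(\lambda):L(\mu)]$. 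Graded BGG reciprocity, Lemma \ref{gr bgg}, together with Koszulity and the identification of the graded lift of $\Del(\lambda)$ with its Jantzen filtration (hence with its radical filtration) used in the proof of that lemma, refines this to $\dim\Ext^1(L(\lambda),L(\mu))=[v^1]\,[\Del(\lambda):L(\mu)](v)$; this graded form is what is used below.

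By Theorem \ref{tight 2}, $B$ has tight multiplicities precisely when $\rad\Del(\lambda)$ is generated by singular vectors for every $\lambda$. So the theorem comes down to the following claim, which I would prove working in the graded lift of $B$ and arguing by induction on the poset $\Lambda$: a homomorphism $\Del(\mu)\to\Del(\lambda)$ is imprimitive if and only if its singular vector lies in $\rad^2\Del(\lambda)$; equivalently (since in the graded lift the radical filtration is the grading filtration) the primitive homomorphisms $\Del(\mu)\to\Del(\lambda)$ are exactly the degree one homogeneous ones, so that $\Hom_{\mathrm{prim}}(\Del(\mu),\Del(\lambda))$ is identified with the degree one component of the graded space $\Hom(\Del(\mu),\Del(\lambda))$. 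Granting this claim, the graded refinement of the inequality $\dim\Hom(\Del(\mu),M)\leq[M:L(\mu)]$ applied to $M=\Del(\lambda)$ gives $\dim\Hom_{\mathrm{prim}}(\Del(\mu),\Del(\lambda))\leq[v^1]\,[\Del(\lambda):L(\mu)](v)=[\rad_1\Del(\lambda):L(\mu)]$, with equality for all $\mu<\lambda$ if and only if the degree one generators of $\rad\Del(\lambda)$ can all be taken to be singular vectors; and by Lemma \ref{rad enough}, Lemma \ref{inherit tight} and Nakayama's lemma this last condition, for all $\lambda$, is equivalent to $\rad\Del(\lambda)$ being generated by singular vectors for all $\lambda$, i.e. to tight multiplicities. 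Combined with the first paragraph this proves both implications.

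The main obstacle is the claim just displayed, and in particular the direction ``singular vector in $\rad^2\Del(\lambda)$ $\Rightarrow$ imprimitive''. The subtlety is that $\rad^2\Del(\lambda)=\rad(\rad\Del(\lambda))$ is a sum of highest weight submodules of intermediate highest weight, but a priori a singular vector lying in this sum need not lie in any single one of them, which is exactly what imprimitivity requires. Resolving this is where the Koszulity and graded BGG reciprocity hypotheses are genuinely used: in the graded lift the radical filtration is the grading, and one must show that the graded Hom spaces between standard modules are generated in degree one, so that a degree $\geq 2$ homomorphism $\Del(\mu)\to\Del(\lambda)$ really does factor through a degree one homomorphism $\Del(\nu)\to\Del(\lambda)$ with $\mu<\nu<\lambda$; for this one exploits the identification of the grading with the Jantzen filtration and the self--duality making $B$ BGG to transfer the statement between standard and costandard modules. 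A secondary point, handled separately as in the proof of Theorem \ref{tight thm}, is the degenerate case in which only a single ``layer'' is present, where at most one primitive homomorphism of a given type can occur and the bookkeeping is simpler; one should also check at the outset that homomorphisms between standard modules decompose into homogeneous components for the Koszul grading, so that ``the degree of a homomorphism between standards'' is well defined.
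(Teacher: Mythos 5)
Your overall architecture is the same as the paper's: identify $\dim\Ext^1(L(\lambda),L(\mu))$ for $\mu<\lambda$ with the multiplicity of $L(\mu)$ in the first radical layer of $\Del(\lambda)$, and then match primitive homomorphisms against that layer. (Your direct homological derivation of $\dim\Ext^1(L(\lambda),L(\mu))=[\rad_1\Del(\lambda):L(\mu)]$ from the vanishing of $\Ext^i(\Del(\lambda),L(\mu))$ is a clean alternative to the paper's route through $\rad_1P(\lambda)$ and graded BGG reciprocity.) However, the proposal has a genuine gap at exactly the point you flag as ``the main obstacle'': the equivalence ``$\phi$ is imprimitive $\iff$ $\im\phi\subseteq\rad^2\Del(\lambda)$'' is never proved. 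You only describe what ``one must show'' (that graded Hom spaces between standards are generated in degree one) without showing it, and since both implications of the theorem are routed through this equivalence in your reduction, the argument is incomplete as written.

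The paper gets by without the full equivalence. For the forward direction it uses only the easy containment (imprimitive $\Rightarrow$ image in $\rad(\Rad\Del(\lambda))$, hence no contribution to the coefficient of $v$) inside a counting argument: under tight multiplicities $\dim\Hom(\Del(\mu),\Del(\lambda))=[\Del(\lambda):L(\mu)](v)|_{v=1}$, the imprimitive part is bounded by the $v^{\geq 2}$ coefficients and the primitive part by the $v^1$ coefficient $\dim\Ext^1(L(\lambda),L(\mu))$, so equality of the totals forces equality in each bound. For the converse it does not attempt to show that every map with image in $\rad^2\Del(\lambda)$ is imprimitive; instead it takes a minimal generating set of $\Rad\Del(\lambda)$ and observes that a generator $L(\mu_i)$ of $\rad_1\Del(\lambda)$ not accounted for by a homomorphism $\Del(\mu_i)\to\Del(\lambda)$ would give $\dim\Ext^1(L(\lambda),L(\mu_i))>\dim\Hom_{\mathrm{prim}}(\Del(\mu_i),\Del(\lambda))$, contradicting the quiver hypothesis. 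To repair your version, either prove the degree-one-generation statement you defer to (a nontrivial standard-Koszulity input) or restructure both implications so that only the easy containment $\Hom_{\mathrm{imprim}}(\Del(\mu),\Del(\lambda))\subseteq\{\phi\;|\;\im\phi\subseteq\rad^2\Del(\lambda)\}$ is required.
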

\begin{proof}
Since $B$ is BGG, $Q_{\Ext^1}(B)$ is guaranteed to be a double quiver, and it suffices to consider the number of arrows $\mu\rightarrow\lambda$ in $Q_{\Ext^1}(B)$ for $\mu<\lambda$. First, let us establish that the information we need about $\Ext^1$ is contained in the structure of $\Del(\lambda)$. We know that $\dim\Ext^1(L(\lambda),L(\mu))=[\rad_1 P(\lambda):L(\mu)]$, and the latter is equal to the coefficient of $v$ in the graded decomposition number $[P(\lambda):L(\mu)](v)$. Now we use the graded BGG reciprocity rule as in Lemma \ref{gr bgg}. If $\sigma\neq\lambda,\mu$ then $[\Del(\sigma):L(\lambda)](v)$ and $[\Del(\sigma):L(\mu)](v)$ both belong to $v\NN[v]$; also, $[\Del(\mu):L(\lambda)](v)=0$ because $\mu<\lambda$. Thus a single term in the sum, when $\sigma=\lambda$, produces the coefficient of $v$ in $[P(\lambda):L(\mu)](v)$; and $\left([\Del(\lambda):L(\lambda)](v)\right)\left([\Del(\lambda):L(\mu)](v)\right)=[\Del(\lambda):L(\mu)](v)$. Therefore the coefficient of $v$ in $[\Del(\lambda):L(\mu)](v)$ is equal to $\dim\Ext^1(L(\lambda),L(\mu))$. 

Now, for the first direction of the proof, assume that $B$ has tight multiplicities. Let $\mu<\lambda$. Then $$\dim\Hom(\Del(\mu),\Del(\lambda))=[\Del(\lambda):L(\mu)]=[\Del(\lambda):L(\mu)](v)|_{v=1}$$ 
and the latter is $\dim\Ext^1(L(\lambda),L(\mu))$ plus the sum of the coefficients of $v^k$, $k>1$. On the other hand, we have $$\dim\Hom(\Del(\mu),\Del(\lambda))=\dim\Hom_{\mathrm{prim}}(\Del(\mu),\Del(\lambda))+\dim\Hom_{\mathrm{imprim}}(\Del(\mu),\Del(\lambda))$$ If $\phi:\Del(\mu)\rightarrow\Del(\lambda)$ factors through a highest weight module $M(\nu)$, $\mu<\nu<\lambda$, then $\im\phi$ is a submodule of a submodule in $\Del(\lambda)$, so belongs to the radical of the radical of $\Del(\lambda)$; thus it doesn't give rise to an extension between $L(\lambda)$ and $L(\mu)$ and so it doesn't contribute to the coefficient of $v$ in $[\Del(\lambda):L(\mu)](v)$. Any primitive hom $\phi:\Del(\mu)\rightarrow\Del(\lambda)$ gives rise to an extension $0\rightarrow L(\mu)\rightarrow E\rightarrow L(\lambda)\rightarrow 0$ since the head of $\im(\phi)$ must belong to $\rad_1\Del(\lambda)\subset\rad_1 P(\lambda)$ if $\phi$ does not factor through any other standard module. So $\dim\Hom_{\mathrm{prim}}(\Del(\mu),\Del(\lambda))\leq\dim\Ext^1(L(\lambda),L(\mu))$ in general, with equality in the situation of tight multiplicities.

For the converse direction of the proof, assume $Q_{\Ext^1}(B)$ is the double of $Q_{\mathrm{prim}}(B)$. Take $\Del(\lambda)\in B$. We will show that $\Rad\Del(\lambda)$ is generated by singular vectors. Let $\mu_1,...,\mu_k$ be a minimal set of $W$-irreps generating $\Rad\Del(\lambda)$. Suppose that for some $\mu_i$ there is no map $\Del(\mu_i)\rightarrow\Del(\lambda)$. Since $\mu_i$ is part of a minimal generating set for $\Rad\Del(\lambda)$, $H_c\cdot\mu_i$ does not belong to $\Rad(\Rad\Del(\lambda))$. Note that $\rad_1\Del(\lambda)=\Rad\Del(\lambda)/\Rad(\Rad\Del(\lambda))$ is semisimple, so the head of $H_c\cdot\mu_i$, which is $L(\mu_i)$, belongs to $\rad_1 \Del(\lambda)\subset\rad_1 P(\lambda)$. But this implies that $\dim\Ext^1(L(\lambda),L(\mu_i))>\dim\Hom_{\mathrm{prim}}(\Del(\mu_i),\Del(\lambda))$, contradicting the assumption.
\end{proof}

Now Lemma \ref{gr bgg} and Theorem \ref{quiver theorem} imply:
\begin{corollary}\label{quiver criterion}
The principal block $B_0$ of $\OO_c(G(r,1,n))$ at $c_0=\ell/n$, $\mathrm{gcd}(\ell,n)=1$, has tight multiplicities if and only if the quivers $Q_{\Ext^1}(B_0)$ and $Q_{\mathrm{prim}}(B_0)$ are the same.
\end{corollary}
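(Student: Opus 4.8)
Here is the plan. I would obtain this as the special case $B = B_0$ of Theorem \ref{quiver theorem}, so the real content is just to check that the principal block $B_0$ of $\OO_c(G(r,1,n))$ at $c_0=\ell/n$ meets the standing hypotheses of that theorem, and then to reconcile the two formulations of the quiver condition.

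First I would verify the hypotheses of Theorem \ref{quiver theorem} for $B=B_0$. The category $\OO_c(G(r,1,n))$ is a highest weight category with finitely many simple objects by \cite{GGOR}, and $B_0$ is one of its blocks. It is BGG: since the parameters are real — as remarked in the subsection on BGG properties of $\OO_c(W)$, $c$ is real in all cases considered here, and in particular $c_0=\ell/n\in\QQ$ — the category $\OO_c(G(r,1,n))$, and hence the block $B_0$, carries a contravariant duality $\delta$ fixing every simple object. Finally, the graded BGG reciprocity rule demanded by Theorem \ref{quiver theorem} is precisely the statement of Lemma \ref{gr bgg}, which holds throughout $\OO_c(G(r,1,n))$ and a fortiori inside $B_0$. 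Along the way I would record the two structural facts we will use about $B_0$: every space $\Hom(\Del(\mu),\Del(\lambda))$ is at most one-dimensional, so that $Q_{\mathrm{prim}}(B_0)$ has at most one arrow between any ordered pair of vertices; and a primitive homomorphism $\Del(\mu)\to\Del(\lambda)$ forces $\mu<\lambda$ in the natural order, so that the arrows of $Q_{\mathrm{prim}}(B_0)$ all point upward in $<$.

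With the hypotheses in place, Theorem \ref{quiver theorem} yields that $B_0$ has tight multiplicities if and only if $Q_{\Ext^1}(B_0)$ is the double of $Q_{\mathrm{prim}}(B_0)$. The final step is to see that this is exactly the assertion of the corollary. Because $B_0$ is BGG, $Q_{\Ext^1}(B_0)$ is already a double quiver, i.e.\ symmetric ($a_{\lambda,\mu}=a_{\mu,\lambda}$); so "being the double of $Q_{\mathrm{prim}}(B_0)$" says precisely that $Q_{\Ext^1}(B_0)$ and $Q_{\mathrm{prim}}(B_0)$ carry the same edges — each symmetric pair $\lambda\rightleftarrows\mu$ of $\Ext^1$-arrows being matched by a single primitive homomorphism between $\Del(\mu)$ and $\Del(\lambda)$ (in whichever direction $<$ allows), and conversely. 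Moreover, in the tight case the inequality $\dim\Hom_{\mathrm{prim}}(\Del(\mu),\Del(\lambda))\le\dim\Ext^1(L(\lambda),L(\mu))$ established inside the proof of Theorem \ref{quiver theorem} becomes an equality with both sides in $\{0,1\}$, so there is no ambiguity of multiplicities and "the same" is literally correct.

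I do not expect a genuine obstacle. The serious inputs — the graded BGG reciprocity of Lemma \ref{gr bgg} (which rests on the Koszulity of affine parabolic category $\OO$ and the comparison of Jantzen filtrations under the equivalence of \cite{RSVV}) and the quiver criterion Theorem \ref{quiver theorem} — are already established; the mildest point requiring care is confirming that the duality functor $\delta$ is available, i.e.\ that the parameters may be taken real, which is automatic in the present setting.
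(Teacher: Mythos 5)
Your proposal is correct and matches the paper's intended argument exactly: the paper derives the corollary immediately from Lemma \ref{gr bgg} and Theorem \ref{quiver theorem}, and your verification of the hypotheses (BGG duality from real parameters, graded reciprocity) together with the reconciliation of ``double of'' versus ``the same as'' via one-dimensionality of Hom spaces is precisely the content the paper leaves implicit.
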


\subsection{Graded decomposition numbers}

Let $B_0$ be the principal block of $\OO_c(G(r,1,n))$ in the case that $B_0$ is $\ttt$-diagonalizable and has tight multiplicities. Define a graph $\Gamma$ for $B_0$ as follows. The vertices of $\Gamma$ are all $\lambda\in B_0$. There is an arrow $\mu\rightarrow\lambda$ if and only if $\mu$ is the lowest degree subspace of a fundamental submodule of $\Del(\lambda)$. The graph $\Gamma$ encodes much of the structure of the block. 

Let $P_\lambda$ be the subgraph of $\Gamma$ coinciding with the lattice of intersections of the fundamental submodules of $\Del(\lambda)$ defined in Theorem \ref{submodule structure}.  Let $d(\mu,\lambda)$ be the length of the longest chain of arrows from $\mu$ to $\lambda$ in $\Gamma$, if such a chain exists.

\begin{theorem}\label{graded dec}
The graded decomposition numbers are given by
$$[\Delta(\lambda):L(\mu)](v)=v^{d(\mu,\lambda)}\hbox{ if $\mu$ is a vertex on }P_\lambda$$
Otherwise, $[\Delta(\lambda):L(\mu)](v)=0$.
\end{theorem}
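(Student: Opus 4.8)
The plan is to compute the graded decomposition numbers of $\Del(\lambda)$ through its Jantzen filtration, using the explicit submodule lattice of $\Del(\lambda)$ and the tight multiplicities hypothesis to locate each composition factor. Since $B_0$ is $\ttt$-diagonalizable, all the elements $f_{\alpha,T}$ and the fundamental submodules $M_{b,k}$, $M_{b_1,b_2,k}$ are defined, and by Lemma \ref{gr bgg} the graded lift of $\Del(\lambda)$ is realized by its Jantzen filtration, so $[\Del(\lambda):L(\mu)](v)=\sum_{d\geq 0}[\Del(\lambda)^{\geq d}/\Del(\lambda)^{\geq d+1}:L(\mu)]\,v^d$. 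Hence the task reduces to: (i) determining which $L(\mu)$ occur, with which multiplicity, and (ii) for each occurring $L(\mu)$, determining the Jantzen layer it lives in, for which Lemma \ref{Jantzen lemma} provides the tool: $f_{\alpha,T}\in\Del(\lambda)^{\geq d}$ iff $f_{\alpha,T}$ lies in at least $d$ fundamental submodules.

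\emph{Composition factors.} As in the proof of Theorem \ref{tight thm}, tight multiplicities force every submodule of $\Del(\lambda)$ to be generated by singular vectors, force every intersection of fundamental submodules to be of the form $N=\CC[V]S^\mu$ with $S^\mu$ irreducible and sitting in the lowest degree of $N$, and (using Theorem 7.1 of \cite{Gri} for the composition length) make the map sending such an intersection to the isotype $\mu$ of its lowest-degree subspace injective, with the resulting $L(\mu)$'s accounting for the full composition length. Therefore $\Del(\lambda)$ is multiplicity free, $[\Del(\lambda):L(\mu)]=0$ unless $\mu$ is a vertex of $P_\lambda$ (this is the second case of the theorem), and for a vertex $\mu$ of $P_\lambda$ the factor $L(\mu)$ is the head of the submodule $N_\mu:=\CC[V]S^\mu$.

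\emph{The Jantzen degree.} Let $I_\mu$ be the set of fundamental submodules of $\Del(\lambda)$ containing $N_\mu$, and put $j(\mu)=|I_\mu|$. A lowest-degree vector $v_\mu$ generating $N_\mu$ lies in a fundamental submodule $M$ iff $N_\mu=\CC[V]v_\mu\subseteq M$, i.e. iff $M\in I_\mu$, so by Lemma \ref{Jantzen lemma} we have $v_\mu\in\Del(\lambda)^{\geq j(\mu)}\setminus\Del(\lambda)^{\geq j(\mu)+1}$. As each $\Del(\lambda)^{\geq d}$ is a submodule, $N_\mu\subseteq\Del(\lambda)^{\geq j(\mu)}$, while $N_\mu\cap\Del(\lambda)^{\geq j(\mu)+1}\subsetneq N_\mu$ lies in $\Rad N_\mu$; hence $N_\mu/(N_\mu\cap\Del(\lambda)^{\geq j(\mu)+1})$ still has head $L(\mu)$ and embeds in the $j(\mu)$-th Jantzen layer, giving $[\Del(\lambda):L(\mu)](v)=v^{j(\mu)}$ by multiplicity freeness. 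It remains to prove $j(\mu)=d(\mu,\lambda)$. The inequality $d(\mu,\lambda)\leq j(\mu)$ is easy: any arrow $\nu\to\rho$ between vertices of $P_\lambda$ has $\nu\neq\rho$ (an arrow forces $\nu<_c\rho$) and $N_\nu\subseteq N_\rho$, hence $N_\nu\subsetneq N_\rho$ and $I_\rho\subsetneq I_\nu$; iterating along any directed path from $\mu$ to $\lambda$ bounds its length by $j(\mu)-j(\lambda)=j(\mu)$.

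For the reverse inequality $j(\mu)\leq d(\mu,\lambda)$ one must exhibit a directed path from $\mu$ to $\lambda$ in $\Gamma$ of length exactly $j(\mu)$. Here I would use the explicit tableau descriptions of the fundamental submodules recorded in Section \ref{submodules} together with Corollary \ref{lowest degree subspace}: given a vertex $\rho\neq\lambda$ of $P_\lambda$, one chooses a fundamental submodule $M$ of $\Del(\lambda)$ with $N_\rho\not\subseteq M$, checks that $N_\rho\cap M$ is again a vertex $N_\nu$ of $P_\lambda$ with $I_\nu=I_\rho\cup\{M\}$, and checks that the surjection $\Del(\rho)\twoheadrightarrow N_\rho$ carries $N_\nu$ back to a fundamental submodule of $\Del(\rho)$, so that $\nu\to\rho$ is an arrow of $\Gamma$; descending from $\Del(\lambda)$ one fundamental submodule at a time then reaches $N_\mu$ after $j(\mu)$ arrows. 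The main obstacle is precisely this last point: it is the purely combinatorial statement that the arrows of $\Gamma$ inside $P_\lambda$ are exactly the covering relations of the intersection lattice $P_\lambda$ and that this lattice is graded with rank function $j$ — equivalently, that the fundamental submodules $M_{b,k}$, $M_{b_1,b_2,k}$ of $\Del(\lambda)$ meet in general position near each vertex, and that a fundamental submodule of a fundamental submodule of $\Del(\lambda)$ is cut out inside $\Del(\lambda)$ by adjoining exactly one more of the $M_{b,k}$ or $M_{b_1,b_2,k}$. Verifying this is a bookkeeping exercise with the pair of tableaux $S(\lambda),T(\lambda)$ and the two families of submodules described in Section \ref{submodules}; everything else above is formal.
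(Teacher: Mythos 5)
Your proposal follows essentially the same route as the paper: the ungraded part (each $L(\mu)$ occurring with multiplicity at most one, exactly for $\mu$ a vertex of $P_\lambda$) is extracted from tight multiplicities and Theorem 7.1 of \cite{Gri} exactly as in the paper's proof of Theorem \ref{tight thm}, and the $v$-power is read off from Lemma \ref{Jantzen lemma} as the number $j(\mu)$ of fundamental submodules containing the image of $\Delta(\mu)$. The only divergence is in the final identification $j(\mu)=d(\mu,\lambda)$. Your proof of $d(\mu,\lambda)\leq j(\mu)$ via the strictly decreasing chain of sets $I_{\nu}$ along a path is clean (and arguably tidier than the paper's treatment, though you should note it uses that nonzero maps between standards are injective, so that composites along a path are nonzero and every vertex on a path to $\lambda$ lies in $P_\lambda$). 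For the reverse inequality you defer to a ``bookkeeping exercise,'' and there you ask for more than is actually needed: the paper does not verify that the arrows of $\Gamma$ inside $P_\lambda$ are exactly the covering relations of a graded lattice. Instead it runs an induction on the number of fundamental submodules cutting out a vertex, using only two facts: (i) dropping one of the $d+1$ fundamental submodules containing $N_\mu$ yields a strictly larger intersection $I$ with highest weight $\nu$, and (ii) a proper containment $N_\mu\subsetneq N_\nu$ forces a path of positive length from $\mu$ to $\nu$ in $\Gamma$ --- and (ii) is formal from tightness, since the image of any nonzero $\Delta(\mu)\to\Delta(\nu)$ is an intersection of fundamental submodules of $\Delta(\nu)$, hence contained in a single one, whose highest weight gives the first step of the path. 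The genuinely combinatorial input thus reduces to point (i) (a general-position statement about the $M_{b,k}$ and $M_{b_1,b_2,k}$, implicit in the paper as well), which is narrower than the full ``graded lattice with rank function $j$'' claim you set yourself; with that substitution your argument closes in the same way as the paper's.
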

\begin{proof}
We know that $[\Del(\lambda):L(\mu)]=\dim\Hom(\Del(\mu),\Del(\lambda))$ is $0$ if $\mu$ is not the highest weight of the intersection of fundamental submodules of $\Del(\lambda)$, and $1$ if it is, because any nonzero homomorphism $\Del(\mu)\rightarrow\Del(\lambda)$ is the inclusion of an intersection of fundamental submodules by the remarks in the proof of Theorem \ref{tight thm}, and $B_0$ has tight multiplicities. Also, we know by Lemma \ref{Jantzen lemma} that the power of $v$ in the graded decomposition number is equal to the number of fundamental submodules of $\Del(\lambda)$ containing the homomorphic image of $\Del(\mu)$. If the longest length of a chain of arrows $\mu\rightarrow\lambda$ is $1$ then $\mu$ labels a fundamental submodule of $\Del(\lambda)$ which is not contained in any other fundamental submodule. Suppose by induction that all the vertices $\nu$ corresponding to intersections of $k$ fundamental submodules have distance $k$ from $\lambda$, for any $k\leq d$, where $d$ is less than the number of fundamental submodules of $\Del(\lambda)$. Suppose $\mu$ is the highest weight of the intersection of exactly $d+1$ fundamental submodules. Then the image of $\Del(\mu)$ is a proper submodule of the intersection $I$ of some $d$ fundamental submodules. Say the highest weight of $I$ is $\nu$. Then the longest path $\nu\rightarrow\lambda$ has length $d$, so there is a path of length at least $d+1$ from $\mu$ to $\lambda$. Suppose there is a path of length greater than $d+1$, whose first arrow is $\mu\rightarrow\xi$. Then $\xi$ must be the intersection of $\ell\geq d+1$ fundamental submodules since any intersection of a smaller collection of fundamental submodules satisfies the induction hypothesis. But then the submodule with highest weight $\mu$ is a proper submodule of the submodule with highest weight $\xi$, so it must be the intersection of more than $d+1$ fundamental submodules. Therefore $d+1$ is the longest length of a path $\mu\rightarrow\lambda$, which completes the induction. 
\end{proof}

The $|\Lambda|\times|\Lambda|$ matrix with entries $[\Del(\lambda):L(\mu)](v)$ is the \textit{graded decomposition matrix}. If rows and columns are arranged so that the partial order $>$ on $\Lambda$ is nonincreasing across rows and down columns then the matrix is upper-triangular with $1$'s on the diagonal.

\subsection{The inverse of the graded decomposition matrix}
We now describe a formula for the inverse of the graded decomposition matrix whose entries were given by Theorem \ref{graded dec}. This formula should really hold in any tight block with simple multiplicities equipped with a minimal partial order. Of course, such a formula is given by abstract Kazhdan-Lusztig theory in terms of the Poincar\'{e} polynomial $p_{\mu,\lambda}$ evaluated at $-1$, c.f. Proposition (3.2) in \cite{CPS2}, if we can calculate $\dim\Hom^n(L(\lambda),\nabla(\mu))$. In a tight block, however, everything is controlled by homomorphisms between standards, and we may define a naive polynomial which does the same job as the Poincar\'{e} polynomial when evaluated at $-1$.
 
Let $\Gamma(\Hom)$ be the graph with vertices $\lambda\in\Lambda$ and an arrow $\mu\rightarrow\lambda$ if there is a nonzero homomorphism $\Del(\mu)\rightarrow\Del(\lambda)$. For $\mu\leq\lambda$, let $b_n(\mu,\lambda)$ be the number of chains of $n$ arrows from $\mu$ to $\lambda$ in $\Gamma(\Hom)$.
Now we define a polynomial in $\NN[v]$ which keeps track of all chains of arrows from $\mu$ to $\lambda$ in $\Gamma(\Hom)$:
$$B_{\mu,\lambda}=\sum_{n=0}^{d(\mu,\lambda)} b_n(\mu,\lambda) v^n$$

\begin{remark} By Theorem \ref{graded dec}, the number of chains of homs of length $n$ from $\mu$ to $\lambda$ is the number of paths in the $v$-dec matrix starting at $(\mu,\mu)$, ending at $(\lambda,\lambda)$, traveling up and left and bouncing off exactly $n$ nonzero $v$-dec numbers and $n-1$ times off the diagonal. The product of the entries on the bounce points of such a path is $v^{d(\mu,\lambda)}$.
\end{remark}

\begin{lemma}\label{inverse graded dec}
$$[L(\lambda)](v)=\sum_{\mu\leq \lambda} B_{\mu,\lambda}(-1)v^{d(\mu,\lambda)}[\Del(\mu)]$$
\end{lemma}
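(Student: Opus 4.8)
\emph{Proof proposal.} The plan is to invert the graded decomposition matrix $D=D(v)$, whose $(\lambda,\mu)$ entry is $[\Del(\lambda):L(\mu)](v)$, and to identify $(D^{-1})_{\lambda,\mu}$ with $B_{\mu,\lambda}(-1)\,v^{d(\mu,\lambda)}$; the lemma then follows by feeding this into the identity $[L(\lambda)](v)=\sum_{\mu}(D^{-1})_{\lambda,\mu}[\Del(\mu)]$ in the graded Grothendieck group. By Theorem \ref{graded dec} we have $D_{\lambda,\mu}=v^{d(\mu,\lambda)}$ when $\mu$ is a vertex on $P_\lambda$ and $D_{\lambda,\mu}=0$ otherwise, so $D=I+N$ with $N$ strictly upper triangular for the order $>$, hence nilpotent, and $D^{-1}=\sum_{k\ge 0}(-1)^kN^k$.

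First I would unwind the matrix powers. Expanding $(N^k)_{\lambda,\mu}$ over sequences $\mu=\nu_0,\nu_1,\dots,\nu_k=\lambda$, such a sequence contributes $\prod_{i=1}^k v^{d(\nu_{i-1},\nu_i)}$ exactly when $N_{\nu_i,\nu_{i-1}}\neq 0$ for all $i$, that is, when $\Hom(\Del(\nu_{i-1}),\Del(\nu_i))\neq 0$ for each $i$ — equivalently when $\nu_0\to\nu_1\to\cdots\to\nu_k$ is a chain of $k$ arrows in $\Gamma(\Hom)$. (Here one uses that a nonzero homomorphism between standards in $B_0$ is injective, being the inclusion of an intersection of fundamental submodules, so composites of nonzero maps are nonzero; thus the arrows of $\Gamma(\Hom)$ are exactly the pairs $\mu<\lambda$ with $\mu$ on $P_\lambda$, and $d(\mu,\lambda)$ is defined whenever $\mu\le\lambda$.) Hence $(N^k)_{\lambda,\mu}=\sum v^{\,\sum_i d(\nu_{i-1},\nu_i)}$, summed over all length-$k$ chains from $\mu$ to $\lambda$.

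The crux is the combinatorial identity recorded in the Remark preceding the lemma: for every chain $\mu=\nu_0\to\cdots\to\nu_k=\lambda$ in $\Gamma(\Hom)$ one has $\sum_{i=1}^k d(\nu_{i-1},\nu_i)=d(\mu,\lambda)$. The inequality "$\le$" is immediate, since concatenating the longest chains realizing the $d(\nu_{i-1},\nu_i)$ yields a chain from $\mu$ to $\lambda$ of length $\sum_i d(\nu_{i-1},\nu_i)$. For "$\ge$" I would invoke the interpretation extracted in the proof of Theorem \ref{graded dec} together with Lemma \ref{Jantzen lemma}: $d(\nu,\lambda)$ equals the number of fundamental submodules of $\Del(\lambda)$ (in the sense of Theorem \ref{submodule structure}) containing the image of $\Del(\nu)\to\Del(\lambda)$, and this image is precisely the intersection of all of them. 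By induction it suffices to treat $k=2$, i.e. to show $d(\mu,\lambda)\le d(\mu,\nu)+d(\nu,\lambda)$ when $\im(\Del(\mu)\to\Del(\lambda))\subseteq\im(\Del(\nu)\to\Del(\lambda))$: the fundamental submodules of $\Del(\lambda)$ containing $\im\Del(\mu)$ split into those also containing $\im\Del(\nu)$ ($d(\nu,\lambda)$ of them) and the remaining ones, and intersecting each of the latter with $\im\Del(\nu)\cong\Del(\nu)$ realizes exactly the fundamental submodules of $\Del(\nu)$ containing $\im\Del(\mu)$ ($d(\mu,\nu)$ of them). Granting the identity, $(N^k)_{\lambda,\mu}=b_k(\mu,\lambda)\,v^{d(\mu,\lambda)}$, whence $(D^{-1})_{\lambda,\mu}=\big(\sum_k(-1)^kb_k(\mu,\lambda)\big)v^{d(\mu,\lambda)}=B_{\mu,\lambda}(-1)\,v^{d(\mu,\lambda)}$, and substituting into $[L(\lambda)](v)=\sum_{\mu\le\lambda}(D^{-1})_{\lambda,\mu}[\Del(\mu)]$ gives the lemma.

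The main obstacle is the last bookkeeping point: checking that restricting to $\im\Del(\nu)$ the fundamental submodules of $\Del(\lambda)$ which do not contain $\im\Del(\nu)$ yields exactly, and bijectively, the fundamental submodules of $\Del(\nu)$ containing $\im\Del(\mu)$. This is the step that genuinely uses the explicit $M_{b,k}$ and $M_{b_1,b_2,k}$ description and the compatibility of Theorem \ref{submodule structure} with passage to a $\ttt$-diagonalizable standard submodule; everything surrounding it is formal manipulation of the (nilpotent, unitriangular) matrix $D$.
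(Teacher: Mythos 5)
Your proposal is correct and lands on the same identity, but it organizes the inversion differently from the paper. The paper fixes $\lambda$ and inducts on $d(\mu,\lambda)$: it uses the recursion $B_{\mu,\lambda}=v\sum_{\sigma}B_{\sigma,\lambda}$ (summing over the targets $\sigma$ of arrows out of $\mu$ in $\Gamma(\Hom)$, i.e.\ decomposing each chain by its first arrow) together with the vanishing of the off-diagonal entries of the product of the matrix $\bigl([L(\sigma):\Del(\tau)](v)\bigr)$ with the graded decomposition matrix, and solves for $[L(\lambda):\Del(\mu)](v)$. You instead write $D=I+N$ and expand $D^{-1}=\sum_k(-1)^kN^k$, identifying $(N^k)_{\lambda,\mu}$ with the generating function of length-$k$ chains; this is the same computation with the induction replaced by the closed-form Neumann series, and your chain-by-chain bookkeeping is exactly the expansion that the paper's first-arrow recursion encodes, so neither argument buys much over the other beyond taste. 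The one substantive ingredient both need is the additivity $d(\mu,\lambda)=d(\mu,\sigma)+d(\sigma,\lambda)$ along arrows of $\Gamma(\Hom)$ lying below $\lambda$: the paper asserts this in a single clause without justification, whereas you isolate it, prove one inequality by concatenating longest chains, and sketch the reverse inequality via the count of fundamental submodules containing $\im(\Del(\mu)\to\Del(\lambda))$ coming from Lemma \ref{Jantzen lemma} and the proof of Theorem \ref{graded dec}. The bijection you flag as the remaining obstacle (matching the fundamental submodules of $\Del(\lambda)$ not containing $\im\Del(\nu)$ with the fundamental submodules of $\Del(\nu)$ containing $\im\Del(\mu)$) is not verified anywhere in the paper either, so your treatment is, if anything, more explicit about what is actually being used.
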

\begin{proof}
Take $\lambda\in\Lambda$. To find $[L(\lambda):\Del(\mu)](v)$, i.e. the coefficient of $[\Del(\mu)]$ in the formula, we will induct on $d(\mu,\lambda)$. For $\mu=\lambda$, $B_{\lambda,\lambda}=1$ and $d(\lambda,\lambda)=0$, and we have $[L(\lambda):\Del(\lambda)](v)=1$ which is true. Assume by induction that if $d(\sigma,\lambda)<n$ that $[L(\lambda):\Del(\sigma)](v)=B_{\sigma,\lambda}(-1)v^{d(\sigma,\lambda)}$. Take $\mu$ such that $d(\mu,\lambda)=n$. 

Consider the first arrow in a chain from $\mu$ to $\lambda$ in $\Gamma(\Hom)$, the arrow originating at $\mu$. If $\sigma$ is the target of an arrow originating at $\mu$, then the contribution to $B_{\mu,\lambda}$ from all those chains $\mu$ to $\lambda$ in $\Gamma(\Hom)$ which pass through $\sigma$ is: $vB_{\sigma,\lambda}$. Therefore: $$B_{\mu,\lambda}=v\left(\sum_{\substack{\mu<\sigma\leq\lambda \\ \dim\Hom(\Del(\mu),\Del(\sigma))\neq0}}B_{\sigma,\lambda}\right)$$
But if $\dim\Hom(\Del(\mu),\Del(\sigma))\neq0$, then $[\Del(\sigma):L(\mu)](v)=v^{d(\mu,\sigma)}$ by Theorem \ref{graded dec}, and if $\sigma>\mu$ then by definition $d(\mu,\sigma)\geq1$. Then $d(\sigma,\lambda)<n$ since $d(\mu,\lambda)=d(\mu,\sigma)+d(\sigma,\lambda)$. Applying induction, $[L(\lambda):\Del(\sigma)](v)=B_{\sigma,\lambda}(-1)v^{d(\sigma,\lambda)}$. Now observe that the dot product of the row vector $[L(\lambda):\Del(-)](v)$ with the column vector $[\Del(-):L(\mu)](v)$ is $0$, since the $|\Lambda|\times|\Lambda|$ matrix with entries $[L(\sigma):\Del(\tau)](v)$ is the inverse of the matrix with entries $[\Del(\sigma):L(\tau)](v)$. Then we have: $$[L(\lambda):\Del(\mu)](v)=-\left(\sum_{\substack{\mu<\sigma\leq\lambda\\ \dim\Hom(\Del(\mu),\Del(\sigma)\neq0}} B_{\sigma,\lambda}(-1)v^{d(\sigma,\lambda)}v^{d(\mu,\sigma)}\right)=B_{\mu,\sigma}(-1)v^{d(\mu,\lambda)}$$
\end{proof}

\subsection{BGG resolutions}\label{bgg res}

We describe an algorithm which computes the standards appearing in a minimal BGG resolution of any $L(\lambda)$ in $B_0$. Let $\Lambda_{\leq\lambda}=\{\mu\leq\lambda\}$ be the poset ideal of $\Lambda$ generated by $\lambda$. Define the subgraph $\Gamma_\lambda$ of $\Gamma$ to be that containing all $\mu\leq\lambda$ and their arrows.
Set 
$$\Lambda_\lambda^i=\{\mu\in\Lambda_{\leq\lambda}\;|\;d(\mu,\lambda)=i\}$$
Next, any time there's an arrow $\nu\rightarrow\mu$ with $\nu\in\Lambda_\lambda^i$ and $\mu\in\Lambda_\lambda^j$ and $i-j>1$, delete that arrow and remove $\Gamma_\nu$ from $\Gamma_\lambda$. The graph $\bar{\Gamma}_\lambda$ which remains is, essentially, the complex of standard modules resolving $L(\lambda)$: conjecturally, the standards in the minimal BGG resolution $\Del_\bullet\rightarrow L(\lambda)\rightarrow 0$ are $$\Del_i=\bigoplus_{\tau\in\bar{\Gamma}_\lambda\cap\Lambda_\lambda^i}\Del(\tau).$$

\begin{conjecture}\label{bgg conj}This algorithm constructs the inverse of the $v$-decomposition matrix of the principal block $B_0$ described in Theorem \ref{graded dec}: $[L(\lambda):\Del(\mu)](v)=(-1)^{d(\mu,\lambda)}v^{d(\mu,\lambda)}$ if $\mu\in\bar{\Gamma}_\lambda$, and $0$ otherwise.
\end{conjecture}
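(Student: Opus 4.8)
The plan is to reduce the conjecture, via the formula for the inverse graded decomposition matrix already in hand (Lemma \ref{inverse graded dec}), to a statement about the M\"obius function of the lattice of intersections of fundamental submodules of a standard module, and then to identify the output of the deletion algorithm with the set of intervals on which that M\"obius function is nonzero. Since Lemma \ref{inverse graded dec} gives $[L(\lambda):\Del(\mu)](v)=B_{\mu,\lambda}(-1)\,v^{d(\mu,\lambda)}$, it suffices to prove that $B_{\mu,\lambda}(-1)=(-1)^{d(\mu,\lambda)}$ when $\mu\in\bar{\Gamma}_\lambda$ and $B_{\mu,\lambda}(-1)=0$ otherwise.

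The first step is to realize $B_{\mu,\lambda}(-1)=\sum_n(-1)^nb_n(\mu,\lambda)$ as a M\"obius number. For $\sigma\le\lambda$ lying on $P_\lambda$, let $D_\sigma\subseteq\Del(\lambda)$ denote the image of the nonzero map $\Del(\sigma)\to\Del(\lambda)$ (unique up to scalar by tight multiplicities); by the remarks in the proof of Theorem \ref{tight thm}, $D_\sigma$ is an intersection of fundamental submodules of $\Del(\lambda)$ with highest weight $\sigma$, and distinct such intersections have distinct highest weights. Using that homomorphisms between standard modules in $\OO_c$ are injective, so that the composite of two nonzero maps between standards is nonzero, one checks that for $\mu,\nu$ on $P_\lambda$ one has $\Hom(\Del(\mu),\Del(\nu))\ne0$ precisely when $D_\mu\subseteq D_\nu$, and hence that a chain of $n$ arrows $\mu\to\cdots\to\lambda$ in $\Gamma(\Hom)$ is the same datum as a strictly increasing chain $D_\mu=N_0\subsetneq N_1\subsetneq\cdots\subsetneq N_n=\Del(\lambda)$ in the lattice $P_\lambda$. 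By Philip Hall's theorem, $B_{\mu,\lambda}(-1)$ is then the M\"obius number of the interval $[D_\mu,\Del(\lambda)]$ of $P_\lambda$; in particular $[L(\lambda):\Del(\mu)](v)=0$ for every $\mu\le\lambda$ not lying on $P_\lambda$, so the conjecture is immediate for such $\mu$.

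The second step is to compute these M\"obius numbers and match them with the algorithm. By Theorem \ref{graded dec} and the Jantzen computation of Lemma \ref{Jantzen lemma}, $d(\mu,\lambda)$ equals the number of fundamental submodules of $\Del(\lambda)$ containing $D_\mu$, and $D_\mu$ is the intersection of exactly that set $\mathcal{F}_\mu$; thus $[D_\mu,\Del(\lambda)]$ is the image of the Boolean lattice $2^{\mathcal{F}_\mu}$ under $S\mapsto\bigcap_{F\in S}F$. The cleanest route is to establish, from the explicit list of the submodules $M_{b,k}$ and $M_{b_1,b_2,k}$ in Theorem \ref{submodule structure} together with the block-tableau combinatorics of Section \ref{submodules}, that $P_\lambda$ is a \emph{distributive} lattice (or at least that every interval of $P_\lambda$ is either Boolean or has vanishing M\"obius number). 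Granting this, the M\"obius number of $[D_\mu,\Del(\lambda)]$ is $0$ unless the interval is Boolean --- equivalently, unless the surjection from $2^{\mathcal{F}_\mu}$ is a bijection, equivalently, unless no fundamental submodule containing $D_\mu$ is redundant in the intersection defining $D_\mu$ --- in which case it equals $(-1)^{d(\mu,\lambda)}$. One then matches redundancy with pruning: a redundant $F'\supsetneq D_\mu$ is the image $D_\sigma$ of some $\sigma>\mu$ whose $\Gamma$-arrow toward $\lambda$ skips at least one rank, since $F'$, being itself a fundamental submodule, is a proper intersection of $\ge2$ fundamental submodules of $\Del(\lambda)$; hence $\mu$ lies in a pruned $\Gamma_\sigma$. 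Conversely a surviving $\mu$ has $\mathcal{F}_\mu$ irredundant and $[D_\mu,\Del(\lambda)]\cong2^{\mathcal{F}_\mu}$. An induction peeling off maximal $\mu<\lambda$ reconciles the iterative character of the algorithm of \S\ref{bgg res} with the recursive structure of $P_\lambda$.

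The main obstacle is supplying the structural input of step two uniformly in $r$ and in $\lambda\in B_0$. Theorem \ref{submodule structure} lists the fundamental submodules of any individual $\Del(\lambda)$ in the principal block, but I do not at present have a uniform description of the lattice $P_\lambda$ of their intersections --- for instance as the lattice of order ideals of a poset built from the block-tableau labels $(i_1,k_1),\dots,(i_s,k_s)$ --- that would make distributivity manifest and pin down exactly the redundancies. One must in particular control how intersections of the ``skew'' submodules $M_{b_1,b_2,k}$, which live in different components of $\lambda$, interact with the ``column-removing'' submodules $M_{b,k}$; the hypothesis of Theorem \ref{tight thm} already constrains this interaction, but converting it into distributivity of $P_\lambda$ and into the precise pruning rule is the step I expect to be genuinely hard, and is presumably why the statement is left as a conjecture. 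The complete verification for $G(2,1,2n)$ and $G(2,2,2n)$ at parameter $1/2n$, where all maps between standards can be written down (\ref{bgg B2n}), and the computed examples for $G(r,1,rn)$ (Section \ref{examples}), are evidence that the general picture is as predicted.
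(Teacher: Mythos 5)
The statement you are proving is left as a conjecture in the paper, and your proposal does not close it either --- but it is worth saying precisely how your partial argument lines up with the paper's partial argument. Your first step is the same reduction the paper makes: by Lemma \ref{inverse graded dec} everything comes down to showing $B_{\mu,\lambda}(-1)=(-1)^{d(\mu,\lambda)}$ for surviving $\mu$ and $B_{\mu,\lambda}(-1)=0$ for pruned $\mu$. Your reformulation of $B_{\mu,\lambda}(-1)$ via Philip Hall's theorem as the M\"obius number of the interval $[D_\mu,\Del(\lambda)]$ in the lattice of intersections of fundamental submodules is correct in this setting (one needs injectivity of nonzero maps between standards and one-dimensionality of the Hom spaces, both available here), and it is a cleaner packaging of the paper's chain-counting. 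In the case where the interval is Boolean your computation reproduces exactly the paper's proof of its claim (a), which is carried out by the recursion $B_{\mu,\lambda}=v\sum_\sigma B_{\sigma,\lambda}$ over the local $N$-cube of outgoing arrows; so that half of the conjecture is genuinely established by your argument, modulo the same appeal to Theorem \ref{submodule structure} for the cube structure.

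The genuine gap is the one you name yourself: the vanishing of the M\"obius number on non-Boolean intervals (for which you would want distributivity of $P_\lambda$, or at least a uniform control of how the submodules $M_{b_1,b_2,k}$ and $M_{b,k}$ intersect), together with the identification of ``non-Boolean interval'' with ``$\mu$ pruned by the algorithm.'' This is precisely the paper's unproved claim (b), which the authors verify only for $r=2$ (Corollary \ref{bgg B2n}, where all maps between standards are written down explicitly and the only non-Boolean intervals are traced through the two triangles and the long hook tails) and check computationally for $G(3,1,3)$, $G(3,1,6)$, $G(4,1,4)$. So your proposal is an honest strategy that matches the paper's partial progress rather than a proof; be careful in step two not to assert as known that $d(\mu,\lambda)$ equals the number of fundamental submodules containing $D_\mu$ \emph{and} that these are irredundant exactly when $\mu$ survives the pruning --- the second assertion is the heart of what remains open.
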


Row $\lambda$ of the inverse of the graded decomposition matrix is given by the formula \ref{inverse graded dec}. Thus Conjecture \ref{bgg conj} is equivalent to the claim that if $\mu\leq\lambda$, then, 
\begin{enumerate}
\item \label{(a)} $B_{\mu,\lambda}(-1)=(-1)^{d(\mu,\lambda)}$ if every path from $\mu$ to $\lambda$ in $\Gamma$ has the same length $d(\mu,\lambda)$
\item $B_{\mu,\lambda}(-1)=0$ if there are paths from $\mu$ to $\lambda$ of different lengths in $\Gamma$.
\end{enumerate}
Let us prove part (\ref{(a)}) by induction:
\begin{proof}(\ref{(a)}) Suppose every path $\mu$ to $\lambda$ in $\Gamma$ has length $d(\mu,\lambda)$. Then all vertices $\nu$ lying on a path from $\mu$ to $\lambda$ in $\Gamma$ also have this property. Consider all nonzero homs out of $\Del(\mu)$ which lie on a path of homs to $\lambda$: the subgraph $C$ of these lying in $\Gamma_\lambda$ is the edge graph of an $N$-cube, where $N$ is the number of outgoing arrows from $\mu$ in $\Gamma_\lambda$. There is a hom from $\Del(\mu)$ to $\Del(\sigma)$ for each vertex $\sigma$ of $C$ by \ref{submodule structure}: $C$ is the lattice of intersections of some collection of fundamental submodules of a standard module which have pairwise intersections not equal to any fundamental submodule, plus the apex of $C$ which is the standard module itself. Then we have:
$$B_{\mu,\lambda}=v\sum_{j=1}^N\sum_{\substack{\sigma\in C \\ d(\mu,\sigma)=j}}p_{\sigma,\lambda}$$
Evaluating at $v=-1$ and using induction gives:
$$B_{\mu,\lambda}(-1)=(-1)\sum_{j=1}^N{N\choose j}(-1)^{d(\mu,\lambda)-j}=(-1)\left(\sum_{j=0}^N{N\choose j}(-1)^{d(\mu,\lambda)-j}-(-1)^{d(\mu,\lambda)}\right)=(-1)^{d(\mu,\lambda)}$$
\end{proof}

\section{Examples}\label{examples}
We illustrate the material of the preceding sections with pictures of diagonalizable principal blocks with tight multiplicities and of BGG resolutions of finite-dimensional modules. The graphs $\Gamma$ below are built up inductively, starting by putting vertices for those $\lambda$ in the principal block $B_0$ such that there cannot be any $\nu\neq\lambda$ in $B_0$ with a nonzero map $\Del(\lambda)\rightarrow\Del(\nu)$. Next, for a given vertex $\lambda$ in $\Gamma$, there is a vertex for each submodule of the form  $M=M_{b,?}$ or $M_{b_1,b_2,?}$ of $\Del(\lambda)$, together with an arrow $\mu\rightarrow\lambda$. Each such vertex for a submodule $M\subset \Del(\lambda)$ is labeled by the irreducible representation $\mu$ of $\CC G(r,1,rn)$ that occupies the lowest degree subspace of $M$. $M\subset\Del(\lambda)$ then determines a map $\Del(\mu)\rightarrow\Del(\lambda)$, since the lowest degree subspace always consists of singular vectors.

 The arrows in shades of red and orange denote the primitive homs between standards, while the green arrows are nonzero compositions of homs. To obtain the $\Ext^1$ quiver for simples, delete the green arrows and double the orange arrows. 

\subsection{BGG resolutions and character formulas} 

According to \ref{bgg conj}, to find the standards in a minimal BGG resolution of some $L(\lambda)$, one should take the subgraph $\Gamma_\lambda$ consisting of partitions with paths to $\lambda$, and delete from it anything that lies on a path through a green arrow to $\lambda$. The result is the graph called $\bar{\Gamma}_\lambda$ which conjecturally ``is" the BGG resolution. We have included several examples of $\bar{\Gamma}_\chi$ for finite-dimensional modules $L(\chi)$, $\chi$ a twist of the trivial rep. $\Del_i$, the $i$'th term in the resolution $\Del_\bullet\rightarrow L(\Triv)\rightarrow 0$, is the direct sum of the $\Del(\lambda)$ where $\lambda$ occurs on the $i$'th level from the top of $\bar{\Gamma}_\chi$ (the character $\chi$ is at level $0$). We find that for $G(3,1,3)$, $G(3,1,6)$, and $G(4,1,4)$ at equal parameters $1/rn$, this produces the right dimensions of finite-dimensional reps according to Corollary \ref{rdim}. Furthermore, we have checked that Conjecture \ref{bgg conj} actually holds for the whole principal block in the cases of $G(3,1,3)$ at $c=1/3$, and $G(4,1,4)$ at $c=1/4$, by computing the $v$-decomposition matrix using \ref{graded dec} and finding its inverse and checking this matches the answer given by our graphical algorithm.

We may then calculate the graded dimension formula for any $L(\lambda)$ in a tight, diagonalizable principal block of $\OO_c$ using the BGG resolution of $L(\lambda)$ as follows: for $\mu$ in the graph $\bar{\Gamma}_\lambda$ , let $\tilde{c}(b)=rc_0\ct(b)+d_{\beta(b)}$ for each box $b$ in $\mu$ and let 
\begin{equation}\label{charge}
\tilde{c}_\lambda(\mu)=\left(\sum_{b\in\lambda}\tilde{c}(b)\right)-\sum_{b\in\mu}\tilde{c}(b)
\end{equation}
Note that $\tilde{c}_\lambda$ is the same function as that given by the Euler element $h_c$, except shifted so that $\Del(\lambda)$ is $\ZZ_{\geq0}$-graded with $\lambda$ in degree $0$. When $\lambda=\Triv$ we will write $\tilde{c}$ for $\tilde{c}_\Triv$. Let $\hd(\mu)=i$ be the homological degree of $\mu$, i.e. the unique $i$ such that $\Del(\mu)$ is a direct summand of $\Del_i$.  Thus $\hd(\mu)=i$ if $\mu$ appears on the $i$'th  level from the top of the graph $\bar{\Gamma}_\lambda$, starting from level $0$ for $\lambda$, i.e $\hd(\mu)=d(\mu,\lambda)$. The \textit{graded dimension} of $L(\lambda)$ is given by the formula:
\begin{equation}\label{gr char}
\sum_{t\geq 0}\dim_\CC (L(\lambda)^i) t^i=\frac{\sum_{\mu\in\bar{\Gamma}_\lambda}(-1)^{\hd(\mu)}\dim_\CC(\mu) t^{\tilde{c}_\lambda(\mu)}}{(1-t)^n}
\end{equation}

\subsection{The principal block of $\OO_{1/rn}(G(r,1,rn))$} In the examples below, we write $c=1/rn$ as shorthand for $c_0=c_1=...=c_{r-1}=1/rn$. We have $d_0=(r-1)/rn$ and $d_i=-1/rn$ if $1\leq i \leq r-1$. Let $B_0$ be the principal block of $\OO_c(G(r,1,rn))$. Every standard module in $B_0$ is $\ttt$-diagonalizable by Lemma \ref{diag}. Following Section \ref{indexation} we have a labeling of $\Triv=(rn)$ with a distinct box $b_i$ for each $0\leq i\leq r-1$. The box $b_0$ is the box with content $0$, so it is the first box, and $(0,k_0)=(0,0)$. For $0<i\leq r-1$, $b_i$ is the $(r-i)n$'th box, and $(i,k_i)=(i,-1)$. Read from left to right, we have 
$$(0,0)>(r-1,-1)>(r-2,-1)>...>(2,-1)>(1,-1)>(0,-1)$$
so by Theorem \ref{tight thm}, the principal block $B_0$ has tight multiplicities.

The partitions in $B_0$ may be found as follows (see \ref{submodules}). Label the Young diagram of $(rn)$ with the numbers $1$ through $rn$ from left to right, then connect up the two ends so that the numbers $1$ through $rn$ are arranged increasing clockwise around a circle. Make $k$ cuts to make $k$ connected subsets $Y_1,...,Y_k$, $k \leq r$, such that each $Y_a$ for $a>1$ contains a multiple of $n$ (if $Y_a$, $a>1$, does not contain a box labeled by a multiple of $n$ then the collection $Y_1,...,Y_k$ cannot give rise to a collection of partitions in the block). For each $1\leq b \leq r-1$, $bn\in Y_a$ for some $a$. Then $Y_a$ may be turned into a (labeled) hook $\lambda^{r-b}$ and put in component $r-b$, such that $\lambda^{r-b}$ has $bn$ in the upper left corner and all numbers increasing (mod $rn$) from bottom to top, left to right. Additionally, $Y_1$ may be placed in the $0$'th component if it is bent into the hook $\lambda^0$ having $1$ in the upper left corner, with numbers increasing left to right across the arm, and bottom to top up the leg, except that $rn$ may occur in the box below $1$. $\lambda$ is the $r$-partition of $rn$ produced by bending some such collection of $Y_a$ at multiples of $n$ or at $1$ and placing them in the appropriate components.

In the case $r=2$, i.e. for $B_{2n}$, this amounts to the following characterization:
\begin{lemma} The bipartitions $(\lambda,\mu)$ in the principal block of $\OO_{1/2n}(B_{2n})$ consist of:
\begin{itemize}
\item$(\lambda,\emptyset)$, $\lambda\vdash 2n$ a hook
\item$(\emptyset,\mu)$, $\mu\vdash 2n$ a hook
\item$((a,1^k),(n+1-k,1^{n-1-a}))$, $1\leq a\leq n-1$, $0\leq k\leq n$. 
\end{itemize}
\end{lemma}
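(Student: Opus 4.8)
The plan is to verify the lemma by a direct computation of $c$-weights together with the block criterion of Corollary \ref{block}; alternatively one could specialize the cut-and-bend description of the block given just above to the case $r=2$, but the weight computation is short and self-contained. First I would record the parameters: for $W=B_{2n}=G(2,1,2n)$ at $c_0=d_0=-d_1=1/2n$, the charged content \eqref{charged content} of a box $b$ of content $t$ equals $(2t+1)/4n$ if $b$ lies in the first component of a bipartition and $(2t-2n-1)/4n$ if $b$ lies in the second. In either case $w_c(b)$ is an odd multiple of $1/4n$ modulo $\ZZ$, and there are exactly $2n$ such classes. A one-line check gives $w_c(\Triv)=w_c((2n),\emptyset)=\{\,(2j+1)/4n \bmod \ZZ : 0\leq j\leq 2n-1\,\}$, which is precisely the set of all $2n$ odd classes, each occurring once; in particular $w_c(\Triv)$ is multiplicity-free.

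Next I would turn the block condition into combinatorics. To each box of a bipartition $(\lambda,\mu)\vdash 2n$ assign the residue in $\ZZ/2n\ZZ$ equal to its content $t$ if the box lies in $\lambda$, and to $t-n-1$ if it lies in $\mu$; since $(2t-2n-1)/4n=(2(t-n-1)+1)/4n$, the $c$-weight of the box depends only on this residue, the assignment residue $\mapsto$ weight is injective on $\ZZ/2n\ZZ$, and its image is exactly the set of odd classes. As $(\lambda,\mu)$ has exactly $2n$ boxes and $w_c(\Triv)$ is multiplicity-free with $2n$ elements, Corollary \ref{block} gives: $(\lambda,\mu)$ lies in the principal block if and only if the $2n$ assigned residues are pairwise distinct, i.e.\ form a complete residue system modulo $2n$.

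Now I would analyze when this happens. Within a single component the assigned residues are just the contents taken modulo $2n$; since a partition $\nu$ of size at most $2n$ has content-span at most $|\nu|-1<2n$, these residues are distinct iff the contents themselves are distinct, which happens iff $\nu$ is a hook. Hence both $\lambda$ and $\mu$ must be hooks. If one of them is empty, the other is a hook of size $2n$, whose $2n$ contents automatically form a complete residue system modulo $2n$; this yields the first two families. If both are nonempty, write $\lambda=(a,1^k)$ with $a\geq 1$, $k\geq 0$ and $\mu=(p,1^q)$ with $p\geq 1$, $q\geq 0$, where $(a+k)+(p+q)=2n$. Then the residues from $\lambda$ form the arc $\{-k,\dots,a-1\}$ and those from $\mu$ form the arc $\{-q-n-1,\dots,p-n-2\}$ in $\ZZ/2n\ZZ$, of lengths $a+k$ and $p+q$. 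These two arcs, whose lengths sum to $2n$, are disjoint iff they are complementary, which (comparing the endpoint $a-1$ of the first arc with the starting point $-q-n-1$ of the second) forces $-q-n-1\equiv a \pmod{2n}$; since $1\leq a+q\leq 2n-1$ this is equivalent to $a+q=n-1$, hence $q=n-1-a$ and $p=n+1-k$, with constraints $1\leq a\leq n-1$ and $0\leq k\leq n$. This is exactly the third family, and the proof is complete.

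Aside from routine bookkeeping, the one step that needs care is the complementary-arc argument in the mixed case: one must check that the displayed congruence has a unique solution in the allowed ranges and read off the bounds on $a$ and $k$ correctly, in particular allowing the degenerate hooks in which $\lambda$ or $\mu$ is a single row or a single column. I expect this, rather than the $c$-weight computation, to be the only place where an error could slip in.
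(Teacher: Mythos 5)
Your proof is correct, and it proceeds by a genuinely different route from the paper's. The paper obtains this lemma as the $r=2$ specialization of the general ``cut-and-bend'' description of the principal block of $\OO_{1/rn}(G(r,1,rn))$, which in turn rests on the $(S,T)$-tableau indexation of Section \ref{indexation}; no separate verification is given. You instead verify the statement directly from the Lyle--Mathas block criterion (Corollary \ref{block}): you compute that the charged contents are $(2t+1)/4n$ and $(2t-2n-1)/4n$ in the two components, observe that $w_c(\Triv)$ is the multiplicity-free set of all $2n$ odd classes, reduce block membership to the $2n$ assigned residues forming a complete system modulo $2n$, and then carry out the hook/arc analysis. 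I checked the key steps: distinctness of residues within one component does force a hook (a non-hook repeats content $0$ at boxes $(1,1)$ and $(2,2)$, and the content span of a partition of size $\leq 2n$ is less than $2n$); the two arcs of total length $2n$ are disjoint iff complementary, and the single endpoint congruence $-q-n-1\equiv a$ does suffice because the second matching condition follows from $|\lambda|+|\mu|=2n$; and the resulting constraints reproduce exactly the third family (for $n=1$ the congruence has no solution in range, consistent with the $B_2$ block having no mixed bipartitions). The outcome matches the displayed blocks for $B_4$, $B_6$, $B_8$. Your argument is more elementary and self-contained; the paper's approach buys the extra structural data (the circular $(S,T)$ indexation and the induced submodule combinatorics) that is used throughout the rest of the paper, whereas yours establishes only the membership statement itself.
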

The principal block for $B_{2n}$ at equal parameters $c=1/2n$ has weight $2$ by \cite{Fa}, Proposition 1.10. Weight $2$ blocks for Iwahori-Hecke algebras of type $B_n$ were studied by Fayers in \cite{Fa}. The image of $B_0$ under $\KZ$ functor coincides with the ``prototype" of a Type I block in Fayers' classification (\cite{Fa}, Propositions 2.1, 2.2).

In the case of the principal block for $\OO_{1/2n}(B_{2n})$, we may explicitly describe all maps between standards in the block in terms of bipartitions. The calculation and case by case description is lengthy; we summarize and refer the reader to the examples for $B_{2n}$, $n=1,2,3,4$ below (\ref{B(2)B(4)B(6)},\ref{B(8)}). The graph $\Gamma$ admits a planar embedding such that given a vertex $\lambda\in\Gamma$, the subgraph $P_\lambda$ spanned by those $\mu$ such that there is a nonzero map $\Del(\mu)\rightarrow\Del(\lambda)$ is either a point, a directed line segment connecting two points or, in the case of $\lambda=((1^{n+1},1^{n-1})$, two directed line segments to $\lambda$, or, encloses a two-dimensional polygon. The polygons which occur in $\Gamma$ are either triangles or diamonds. In fact, there are only two triangles which arise in $\Gamma$, and they are: 
\begin{center}
\begin{tikzpicture}[scale=1.5,font=\footnotesize,every text node part/.style={align=center}]
\node(top1) at (-3,0) {$(n,1^n),\;\emp$};
\node(mid1) at (-1.5,-1) {$(n-1,1^n),\;(1)$};
\node(bot1) at (-3,-2) {$(n-1,1^{n+1}),\;\emp$};
\node(top2) at (3,0) {$\emp,\;(n+2,1^{n-2})$};
\node(mid2) at (1.5,-1) {$(1),\;(n+1,1^{n-2})$};
\node(bot2) at (3,-2) {$\emp,\;(n+1,1^{n-1})$};
\path[->]
(mid1) edge node {}(top1)
(bot1) edge node {}(mid1)
(bot1) edge node {}(top1)
(mid2) edge node {}(top2)
(bot2) edge node {}(mid2)
(bot2) edge node {}(top2);
\end{tikzpicture}
\end{center}
To read the graded decomposition numbers off of $\Gamma$, look at the vertex $\lambda$ in the graph, and look at the polygon whose edges flow into vertex $\lambda$: then $[\Del(\lambda):L(\mu)](v)=0$ unless $\mu$ is a vertex in that polygon, in which case $[\Del(\lambda):L(\mu)](v)=1$ if $\lambda=\mu$, $v$ if there is a single path $\mu\rightarrow\lambda$, and $v^2$ if the path $\mu\rightarrow\lambda$ traverses one side of a single triangle or diamond. 

Theorem \ref{intro bgg thm} now follows from the discussion above and \ref{inverse graded dec}.
\begin{corollary}\label{bgg B2n} Let $\Del_i$ denote the coefficient of $v^i$, up to signs, in the graded character of $L_{1/2n}(\Triv)$ in $\OO_{1/2n}(B_{2n})$. The $\mu$ appearing as highest weights of summands of $\Del_i$ are those obtained from $((2n),\emp)$ by $i$ single-chunk-of-boxes moves of the forms described in Corollary \ref{lowest degree subspace}, and which can't be so obtained by a smaller number of such moves. This proves Theorem \ref{intro bgg thm}. Thus every $\mu\in\Lambda$ appears in the character except for those $\mu=(\mu^0,\mu^1)$ such that either $\mu^0=\emp$ and $\mu^1$ has more than $n+1$ boxes in the first row, or $\mu^1=\emp$ and $\mu^0$ has more than $n+1$ boxes in the first column. 
\end{corollary}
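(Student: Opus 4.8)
The plan is to turn the statement into a computation on the graph $\Gamma$ of the principal block $B_0$ of $\OO_{1/2n}(B_{2n})$, relying on the complete description of the homomorphisms between standard modules in $B_0$. By Lemma \ref{diag} every standard module in $B_0$ is $\ttt$-diagonalizable, and (as explained in the introduction, in the type $C_{2n}$ conventions) the labels on $(2n)$ satisfy the inequality of Theorem \ref{tight thm}, so $B_0$ has tight multiplicities. Hence Theorem \ref{tight 2} applies: $L_{1/2n}(\Triv)=L((2n),\emp)$ has a minimal BGG resolution $\Del_\bullet\to L(\Triv)\to 0$, and in the Grothendieck group $[L(\Triv)]=\sum_i(-1)^i[\Del_i]$. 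Lemma \ref{inverse graded dec} together with Theorem \ref{graded dec} identifies the contribution of $\Del(\mu)$ to this alternating sum with $B_{\mu,\Triv}(-1)\,v^{d(\mu,\Triv)}$, where $B_{\mu,\Triv}\in\NN[v]$ counts directed paths from $\mu$ to $\Triv$ in $\Gamma(\Hom)$ and $d(\mu,\Triv)$ is the length of the longest directed path from $\mu$ to $\Triv$ in $\Gamma$. So the problem reduces to determining, for each $\mu\le\Triv$, whether $B_{\mu,\Triv}(-1)$ vanishes; where it does not, one must see that it equals $(-1)^{d(\mu,\Triv)}$, so that $\Del(\mu)$ appears exactly once, in homological degree $d(\mu,\Triv)$.

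For the non-vanishing case I would invoke the already-proven part (a) of Conjecture \ref{bgg conj}: whenever all directed paths $\mu\to\Triv$ in $\Gamma$ have a common length one gets $B_{\mu,\Triv}(-1)=(-1)^{d(\mu,\Triv)}$, and in that situation $d(\mu,\Triv)$ is also the shortest path length, i.e.\ the least number of box-moves of Corollary \ref{lowest degree subspace} carrying $((2n),\emp)$ to $\mu$. Using the explicit planar picture of $\Gamma$ for $B_{2n}$ recalled before the statement --- the relevant subgraphs $P_\lambda$ are points, segments, diamonds, or one of the two distinguished triangles --- one checks that for the $\mu$ surviving into the resolution the interval $[\mu,\Triv]$ in $\Gamma$ is graded (a product of single box-move steps and diamonds), so the hypothesis of part (a) holds. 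The vanishing case is where the triangles intervene: when every directed path $\mu\to\Triv$ passes through the apex of a triangle having $\mu$ as a bottom vertex, collecting paths according to the length ($1$ or $2$) of their initial segment into that apex produces a common factor $v+v^2$, whence $B_{\mu,\Triv}(-1)=0$; the same conclusion propagates to every vertex of $\Gamma$ below such a $\mu$ all of whose routes to $\Triv$ remain inside that configuration.

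The remaining work is combinatorial bookkeeping with the explicit classification of the bipartitions in $B_0$ --- the hooks $(\lambda,\emp)$, the hooks $(\emp,\mu)$, and the mixed bipartitions $((a,1^k),(n+1-k,1^{n-1-a}))$ --- and the two families of box-moves issuing from $((2n),\emp)$: one writes out the pruned graph $\bar{\Gamma}_\Triv$, reads the homological degrees off $d(-,\Triv)$, and matches against the lists $\Del_i$ and $\Del_{2n-i}$ of Theorem \ref{intro bgg thm}, finding in particular that the $\mu$ failing to survive are exactly $(\emp,\mu^1)$ with $\mu^1$ having more than $n+1$ boxes in its first row and $(\mu^0,\emp)$ with $\mu^0$ having more than $n+1$ boxes in its first column. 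The description of $L_{1/2n}(\Triv)$ as the coordinate ring of the stated scheme-theoretic intersection is supplied by the earlier corollary identifying $L_c(\Triv)$ for $G(2,1,2n)$ with a coordinate ring of a subspace arrangement, so that point only needs to be cited. I expect the principal obstacle to be this case analysis --- in particular, pinning down precisely which $\mu$ have all paths to $\Triv$ trapped through a triangle apex (hence $B_{\mu,\Triv}(-1)=0$) versus which escape along an alternative route, and confirming that the longest-chain statistic on the pruned graph reproduces the homological grading of Theorem \ref{intro bgg thm} degree by degree.
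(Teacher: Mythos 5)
Your overall strategy coincides with the paper's: reduce to the sign-alternating sum $\sum_\mu B_{\mu,\Triv}(-1)v^{d(\mu,\Triv)}[\Del(\mu)]$ via Lemma \ref{inverse graded dec} and Theorem \ref{graded dec}, invoke the already-proved part (a) of Conjecture \ref{bgg conj} for the $\mu$ for which all paths to $\Triv$ in $\Gamma$ have the same length, and show $B_{\mu,\Triv}(-1)=0$ for the excluded bipartitions. However, your treatment of the vanishing case has a genuine gap, in two respects. First, for $\mu=(\emp,\mu^1)$ with $\mu^1$ a hook having more than $n+1$ boxes in its first row, no cancellation argument is needed or available: such $\mu$ are simply not $\leq\Triv$ (they have no directed path to $\Triv$ at all), so they drop out for the trivial reason that $B_{\mu,\Triv}=0$. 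Your uniform ``triangle'' mechanism does not apply to them.

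Second, and more seriously, for $\mu=((b,1^{2n-b}),\emp)$ with $2n-b\geq n+1$ the mechanism you propose is false: it is \emph{not} true that every directed path from such a $\mu$ to $\Triv$ passes through the apex $((n,1^n),\emp)$ of the triangle, so no common factor $v+v^2$ can be extracted. For instance, in $B_8$ the path $((3,1^5),\emp)\to((3,1^4),(1))\to((3,1^3),(2))\to((5,1^3),\emp)\to\cdots\to\Triv$ avoids $((4,1^4),\emp)$ entirely, and the same phenomenon persists for the deeper hooks $((2,1^6),\emp)$, $((1^8),\emp)$, whose routes to $\Triv$ also escape the triangle. The cancellation actually comes from a different pairing: one decomposes $B_{\mu,\Triv}$ as $v^a$ (the unique chain of length $a=2n-b$ using the imprimitive green arrow) plus $vB_{((b,1^n),(1^{n-b})),\Triv}$ plus terms $(v^k+v^{k+1})B_{((b+k,1^n),(1^{n-b-k})),\Triv}$ arising because each intermediate vertex in that list sits atop a diamond whose diagonal is a composite hom, giving two chains of lengths $k$ and $k+1$ into it. The $(v^k+v^{k+1})$ terms die at $v=-1$, and the residual cancellation is $(-1)^a+(-1)^{d(((b,1^n),(1^{n-b})),\Triv)+1}=0$ using $d(((b,1^n),(1^{n-b})),\Triv)=a$ and part (a). Without this decomposition (or an equivalent one), your bookkeeping does not close; as written, the criterion ``all paths trapped through a triangle apex'' would misclassify exactly the vertices whose vanishing you need.
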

\begin{proof}
If $\mu=(\emp,\mu^1)$ and $\mu^1$ has more than $n+1$ boxes in the first row, then $\mu^1$ is not comparable to $\Triv$ in the partial order, so does not appear in the character formula. If $\mu=(\mu^0,\emp)$ and $\mu^0$ has more than $n+1$ boxes in the first column, then we may calculate $B_{\mu,\Triv}$, and thus $[L(\lambda)](v)$ by \ref{inverse graded dec}, as follows. Let $\mu^0=(b,1^{2n-b})$ with $a:=2n-b\geq n+1$. There is a single chain of homs $\mu\rightarrow\Triv$ which passes through the green arrow and this path has length $a$. Any other chain of homs $\mu\rightarrow\Triv$ passes through one of the vertices $((n-1,1^n),(1))$, $((n-2,1^n),(1^2))$,...,$((b,1^n),(1^{n-b}))$. Given one of these vertices $\nu$, any path from $\nu$ to $\Triv$ in $\Gamma$ has length $d(\nu,\Triv)$. For such a $\nu$, there is a single path from $\mu$ to $\nu$ along arrows from $\Gamma$ which does not go through the other vertices in this list lying below $\nu$ in the partial order. Then there is a single path of homs from $\mu$ to $\nu$, unless $\nu$ is $((b,1^n),(1^{n-b}))$, that goes across the diagonal of a diamond with top vertex $\nu$. We then have 
$$B_{\mu,\Triv}=v^a+vB_{((b,1^n),(1^{n-b})),\Triv}+\sum_{k=1}^{n-b-1} (v^k+v^{k+1})B_{((b+k,1^n),(1^{n-b-k})),\Triv}$$
and so
$$
B_{\mu,\Triv}(-1)=(-1)^a+(-1)^{d(((b,1^n),(1^{n-b})),\Triv)+1}=(-1)^a+(-1)^{a+1}=0
$$
Then for these $\mu$, $[L(\Triv):\Del(\mu)](v)=0$ by \ref{inverse graded dec}.

Next, by the part of Conjecture \ref{bgg conj} which we proved (\ref{(a)}), if every path $\mu$ to $\Triv$ in $\Gamma$ has length $d(\mu,\Triv)$ then $[L(\Triv):\Del(\mu)]=(-1)^{d(\mu,\Triv)}v^{d(\mu,\Triv)}$. This implies the formula.
\end{proof}
The same argument works for any $\lambda$ in place of $\Triv$ with $\lambda\geq((n,1^n),\emp)$. A similar argument works for $\lambda\geq(\emp,(n+2,1^{n-2}))$. Any $\lambda\in B_0$ is comparable to at most one of $((n,1^n),\emp)$, $(\emp,(n+2,1^{n-2}))$. Thus Conjecture \ref{bgg conj} holds for $L(\lambda)$ in either case. If $\lambda$ is not comparable to either of $((n,1^n),\emp)$, $(\emp,(n+2,1^{n-2}))$, then any $\mu\leq\lambda$ satisfies the conditions of (\ref{(a)}) and so the character formula of Conjecture \ref{bgg conj} holds by the proof of (\ref{(a)}). This verifies Conjecture \ref{bgg conj} for the principal block of $B_{2n}$ at equal parameters $c=1/2n$, the case concerning the Oblomkov-Yun dimension formula proved in \ref{dim formula}.

\subsection{Graded dimensions}
The pictures below contain all information necessary to calculate graded dimensions of simple modules. In $\Gamma$, vertical spacing between partitions is proportionate to difference between their charged contents, and the minimum of such in $\Gamma$ is $1$. In  BGG resolutions, latitudes are homological degrees. We then obtain graded dimensions and dimensions over $\CC$ of the finite-dimensional representations, which agree, in the case of $L(\Triv)$, with Corollary \ref{rdim}:

In \ref{G(3)ch}, let $\chi_1=(\emp,(3),\emp)$ and $\chi_2=(\emp,\emp,(3))$. We have $\tilde{c}_{\chi_i}(\lambda)=2-\sum_{b\in\lambda}\tilde{c}(b)$, see \ref{charge}, while $\tilde{c}(\lambda)=5-\sum_{b\in\lambda}\tilde{c}(b)$. Applying formula \ref{gr char}:
\begin{align*}
\sum_{t\geq 0}\dim_\CC(L(\Triv)^i) t^i &= 1+3t+6t^2+7t^3+3t^4\\
\sum_{t\geq 0}\dim_\CC (L(\chi_1)^i) t^i
&=1\\
\sum_{t\geq 0}\dim_\CC (L(\chi_2)^i) t^i
&=1+3t+3t^2+t^3
\end{align*}
Evaluating at $t=1$, we get $\dim L(\Triv)=20$, $\dim L(\emp,(3),\emp)=1$, and $\dim L(\emp,\emp,(3))=8$.

In \ref{G(3,6)triv},\ref{G(3,6)ch,G(4)} let $\Triv=((6),\emp,\emp)$, $\chi_1=(\emp,(6),\emp)$, and $\chi_2=(\emp,\emp,(6))$. Then $\tilde{c}(\lambda)=9.5-\sum_{b\in\lambda}\tilde{c}(b)$, and $\tilde{c}_{\chi_i}(\lambda)=6.5-\sum_{b\in\lambda}\tilde{c}(b)$, see \ref{charge}. Applying formula \ref{gr char}:
\begin{align*}
\sum_{t\geq 0}\dim_\CC (L(\Triv)^i) t^i
&=1+6t+21t^2+51t^3+96t^4+141t^5+141t^6+66t^7+15t^8\\
\sum_{t\geq 0}\dim_\CC (L(\chi_1)^i) t^i
&=1+6t+21t^2+31t^3+6t^4\\
\sum_{t\geq 0}\dim_\CC (L(\chi_2)^i) t^i
&=1+6t+15t^2+20t^3
\end{align*}
Evaluating at $t=1$, we get $\dim L(\Triv)=538$, $\dim L(\emp,(6),\emp)=65$, and $\dim L(\emp,\emp,(6))=42$.

In \ref{G(4)triv}, we have $\tilde{c}(\lambda)=9-\sum_{b\in\lambda}\tilde{c}(b)$, while $\tilde{c}_\chi(\lambda)=5-sum_{b\in\lambda}\tilde{c}(b)$ for $\chi\neq\Triv$ a character with $(4)$ in a single component, see \ref{charge}. Applying formula \ref{gr char}: 
\begin{align*}
\sum_{t\geq 0}\dim_\CC (L(\Triv)^i) t^i
&=1+4t+10t^2+20t^3+31t^4+40t^5+34t^6+12t^7\\
\sum_{t\geq 0}\dim_\CC (L(\emp,(4),\emp,\emp)^i) t^i&=1+4t+4t^2\\
\sum_{t\geq 0}\dim_\CC (L(\emp,\emp,(4),\emp)^i) t^i&=1+4t+6t^2\\
\sum_{t\geq 0}\dim_\CC (L(\emp,\emp,\emp,(4))^i) t^i&=1+4t+10t^2+16t^3+13t^4+4t^5\\
\end{align*}
Evaluating at $t=1$, we get $\dim L(\Triv)=152$, $\dim L(\emp,(4),\emp,\emp)=9$, $\dim L(\emp,\emp,(4),\emp))=11$, and $\dim L(\emp,\emp,\emp,(4))=48$.

\subsection{The principal blocks of $B_2$ at $c=1/2$, $B_4$ at $c=1/4$, and $B_6$ at $c=1/6$.}\label{B(2)B(4)B(6)}
\begin{center}
{\fontsize{5}{6}\selectfont
\begin{tikzpicture}[scale=1.4,every text node part/.style={align=center}]
\node (E11) at (-3,4) {$\yng(2),\;\emp$};
\node (E21) at (-2,3) {$\emp,\;\yng(2)$};
\node (E22) at (-4,3) {$\yng(1,1),\;\emp$};
\node (E31) at (-3,2) {$\emp,\;\yng(1,1)$};

\node(f11) at (1,4) {$\yng(4),\;\emp$};
\node(f21) at (1,2) {$\yng(3,1),\;\emp$};
\node(f22) at (5,2) {$\emp,\;\yng(4)$};
\node(f31) at (3,1) {$\yng(1),\;\yng(3)$};
\node(f41) at (1,0) {$\yng(2,1,1),\;\emp$};
\node(f42) at (3,0) {$\yng(1,1),\;\yng(2)$};
\node(f43) at (5,0) {$\emp,\;\yng(3,1)$};
\node(f51) at (3,-1) {$\yng(1,1,1),\;\yng(1)$};
\node(f61) at (1,-2) {$\yng(1,1,1,1),\;\emp$};
\node(f62) at (5,-2) {$\emp,\;\yng(2,1,1)$};
\node(f71) at (5,-4) {$\emp,\;\yng(1,1,1,1)$};

\node(g11) at (-3.5,-1) {$\yng(6),\;\emp$};
\node(g21) at (-3.5,-3) {$\yng(5,1),\;\emp$};
\node(g22) at (2.5,-3) {$\emp,\;\yng(6)$};
\node(g31) at (-3.5,-5) {$\yng(4,1,1),\;\emp$};
\node(g32) at (-1.5,-5) {$\yng(2),\;\yng(4)$};
\node(g33) at (2.5,-5) {$\emp,\;\yng(5,1)$};
\node(g41) at (-1.5,-6) {$\yng(2,1),\;\yng(3)$};
\node(g42) at (0.5,-6) {$\yng(1),\;\yng(4,1)$};
\node(g51) at (-3.5,-7) {$\yng(3,1,1,1),\;\emp$};
\node(g52) at (-1.5,-7) {$\yng(2,1,1),\;\yng(2)$};
\node(g53) at (0.5,-7) {$\yng(1,1),\;\yng(3,1)$};
\node(g54) at (2.5,-7) {$\emp,\;\yng(4,1,1)$};
\node(g61) at (-1.5,-8) {$\yng(2,1,1,1),\;\yng(1)$};
\node(g62) at (0.5,-8) {$\yng(1,1,1),\;\yng(2,1)$};
\node(g71) at (-3.5,-9) {$\yng(2,1,1,1,1),\;\emp$};
\node(g72) at (0.5,-9) {$\yng(1,1,1,1),\;\yng(1,1)$};
\node(g73) at (2.5,-9) {$\emp,\;\yng(3,1,1,1)$};
\node(g81) at (-3.5,-11) {$\yng(1,1,1,1,1,1),\;\emp$};
\node(g82) at (2.5,-11) {$\emp,\;\yng(2,1,1,1,1)$};
\node(g91) at (2.5,-13) {$\emp,\;\yng(1,1,1,1,1,1)$};

\path[->,semithick,RedOrange]
(E21) edge node [left] {} (E11)
(E22) edge node [right] {}(E11)
(E31) edge node [left] {} (E21)
(E31) edge node [right]{}(E22);

\path[->,semithick,RedOrange]
(f21) edge node {}(f11)
(f31) edge node {}(f11)
(f31) edge node {}(f22)
(f41) edge node {}(f21)
(f42) edge node {}(f21)
(f42) edge node {}(f31)
(f43) edge node {}(f31)
(f51) edge node {}(f41)
(f51) edge node {}(f42)
(f61) edge node {}(f51)
(f62) edge node {}(f42)
(f62) edge node {}(f43)
(f71) edge node {}(f51)
(f71) edge node {}(f62);
\path[->,semithick,OliveGreen]
(f43) edge node {}(f22)
(f61) edge node {}(f41);
\path[->,semithick,RedOrange]
(g21) edge node {}(g11)
(g31) edge node {}(g21)
(g32) edge node {}(g11)
(g32) edge node {}(g22)
(g33) edge node {}(g22)
(g41) edge node {}(g21)
(g41) edge node {}(g32)
(g42) edge node {}(g32)
(g42) edge node {}(g33)
(g51) edge node {}(g31)
(g52) edge node {}(g31)
(g52) edge node {}(g41)
(g53) edge node {}(g41)
(g53) edge node {}(g42)
(g54) edge node {}(g42)
(g61) edge node {}(g51)
(g61) edge node {}(g52)
(g62) edge node {}(g52)
(g62) edge node {}(g53)
(g71) edge node {}(g61)
(g72) edge node {}(g61)
(g72) edge node {}(g62)
(g73) edge node {}(g53)
(g73) edge node {}(g54)
(g81) edge node {}(g71)
(g81) edge node {}(g72)
(g82) edge node {}(g62)
(g82) edge node {}(g73)
(g91) edge node {}(g72)
(g91) edge node {}(g82);
\path[->,semithick,OliveGreen]
(g54) edge node {}(g33)
(g71) edge node {}(g51);
\end{tikzpicture}}
\end{center}

\subsection{The principal block of $\OO_{1/8}(B_8)$}\label{B(8)}

\begin{center}
{\fontsize{5}{6}\selectfont
\begin{tikzpicture}[scale=1.35,every text node part/.style={align=center}]
\node(01) at (-5,0){$\yng(8),\;\emp$};
\node(11) at (-5,-2){$\yng(7,1),\;\emp$};
\node(12) at (5,-2){$\emp,\;\yng(8)$};
\node(21) at (-5,-4){$\yng(6,1,1),\;\emp$};
\node(22) at (5,-4){$\emp,\;\yng(7,1)$};
\node(31) at (-2.5,-5){$\yng(3),\;\yng(5)$};
\node(41) at (-5,-6){$\yng(5,1,1,1),\;\emp$};
\node(42) at (-2.5,-6){$\yng(3,1),\;\yng(4)$};
\node(43) at (0,-6){$\yng(2),\;\yng(5,1)$};
\node(44) at (5,-6){$\emp,\;\yng(6,1,1)$};
\node(51) at (-2.5,-7){$\yng(3,1,1),\;\yng(3)$};
\node(52) at (0,-7){$\yng(2,1),\;\yng(4,1)$};
\node(53) at (2.5,-7){$\yng(1),\;\yng(5,1,1)$};
\node(61) at (-5,-8){$\yng(4,1,1,1,1),\;\emp$};
\node(62) at (-2.5,-8){$\yng(3,1,1,1),\;\yng(2)$};
\node(63) at (0,-8){$\yng(2,1,1),\;\yng(3,1)$};
\node(64) at (2.5,-8){$\yng(1,1),\;\yng(4,1,1)$};
\node(65) at (5,-8){$\emp,\;\yng(5,1,1,1)$};
\node(71) at (-2.5,-9){$\yng(3,1,1,1,1),\;\yng(1)$};
\node(72) at (0,-9){$\yng(2,1,1,1),\;\yng(2,1)$};
\node(73) at (2.5,-9){$\yng(1,1,1),\;\yng(3,1,1)$};
\node(81) at (-5,-10){$\yng(3,1,1,1,1,1),\emp$};
\node(82) at (0,-10){$\yng(2,1,1,1,1),\;\yng(1,1)$};
\node(83) at (2.5,-10){$\yng(1,1,1,1),\;\yng(2,1,1)$};
\node(84) at (5,-10){$\emp,\;\yng(4,1,1,1,1)$};
\node(91) at (2.5,-11){$\yng(1,1,1,1,1),\;\yng(1,1,1)$};
\node(101) at (-5,-12){$\yng(2,1,1,1,1,1,1),\;\emp$};
\node(102) at (5,-12){$\emp,\;\yng(3,1,1,1,1,1)$};
\node(111) at (-5,-14){$\yng(1,1,1,1,1,1,1,1),\;\emp$};
\node(112) at (5,-14){$\emp,\;\yng(2,1,1,1,1,1,1)$};
\node(121) at (5,-16){$\emp,\;\yng(1,1,1,1,1,1,1,1)$};
\path[->,semithick,RedOrange]
(11) edge node {}(01)
(21) edge node {}(11)
(22) edge node {}(12)
(41) edge node {}(21)
(31) edge node {}(01)
(31) edge node {}(12)
(42) edge node {}(11)
(42) edge node {}(31)
(43) edge node {}(31)
(43) edge node {}(22)
(44) edge node {}(22)
(51) edge node {}(21)
(51) edge node {}(42)
(52) edge node {}(42)
(52) edge node {}(43)
(53) edge node {}(43)
(53) edge node {}(44)
(61) edge node {}(41)
(62) edge node {}(41)
(62) edge node {}(51)
(63) edge node {}(51)
(63) edge node {}(52)
(64) edge node {}(52)
(64) edge node {}(53)
(65) edge node {}(53)
(71) edge node {}(61)
(71) edge node {}(62)
(72) edge node {}(62)
(72) edge node {}(63)
(73) edge node {}(63)
(73) edge node {}(64)
(81) edge node {}(71)
(82) edge node {}(71)
(82) edge node {}(72)
(83) edge node {}(72)
(83) edge node {}(73)
(84) edge node {}(64)
(84) edge node {}(65)
(91) edge node {}(82)
(91) edge node {}(83)
(101) edge node {}(81)
(101) edge node {}(82)
(102) edge node {}(73)
(102) edge node {}(84)
(111) edge node {}(101)
(111) edge node {}(91)
(112) edge node {}(83)
(112) edge node {}(102)
(121) edge node {}(112)
(121) edge node {}(91);
\path[->,semithick,OliveGreen]
(65) edge node {}(44)
(81) edge node {}(61);
\end{tikzpicture}}
\end{center}

\subsection{The principal block of $\OO_{1/3}(G(3,1,3))$}\label{G(3)}

\begin{center}
{\fontsize{5}{6}\selectfont
\begin{tikzpicture}[scale=1.5,every text node part/.style={align=center}]
\node(N11) at (-3,5){$\yng(3),\;\emp,\;\emp$};
\node(N21) at (-3,2){$\yng(2,1),\;\emp,\;\emp$};
\node(N22) at (0,2){$\emp,\;\yng(3),\;\emp$};
\node(N23) at (3,2){$\emp,\;\emp,\;\yng(3)$};
\node(N31) at (-2,1){$\yng(1),\;\yng(2),\;\emp$};
\node(N41) at (-2.5,0){$\yng(1,1),\;\yng(1),\;\emp$};
\node(N42) at (2.2,0){$\emp,\;\yng(2),\;\yng(1)$};
\node(N51) at (-3,-1){$\yng(1,1,1),\;\emp,\;\emp$};
\node(N52) at (0,-1){$\emp,\;\yng(2,1),\;\emp$};
\node(N53) at (3,-1){$\emp,\;\emp,\;\yng(2,1)$};
\node(N61) at (1,-2){$\emp,\;\yng(1),\;\yng(1,1)$};
\node(N71) at (0,-4){$\emp,\;\yng(1,1,1),\;\emp$};
\node(N72) at (3,-4){$\emp,\;\emp,\;\yng(1,1,1)$};
\path[->,semithick,RedOrange]
(N21) edge node {}(N11)
(N23) edge node {}(N11)
(N31) edge node {}(N11)
(N31) edge node {}(N22)
(N41) edge node {}(N21)
(N41) edge node {}(N31)
(N42) edge node {}(N31)
(N42) edge node {}(N23)
(N51) edge node {}(N41)
(N52) edge node {}(N42)
(N53) edge node {}(N21)
(N53) edge node {}(N23)
(N61) edge node {}(N41)
(N61) edge node {}(N42)
(N61) edge node {}(N53)
(N71) edge node {}(N52)
(N71) edge node {}(N61)
(N72) edge node {}(N51)
(N72) edge node {}(N61);
\path[->,semithick,OliveGreen]
(N42) edge node {}(N22)
(N51) edge node {}(N21)
(N52) edge node {}(N22)
(N52) edge node {}(N31)
(N71) edge node {}(N41)
(N72) edge node {}(N53);
\end{tikzpicture}}
\end{center}

\subsection{The (conjectural) BGG resolutions of $L(\Triv)$, $L(\emp,\emp,(3))$, and $L(\emp,(3),\emp)$ for $G(3,1,3)$ when $c=1/3$.}\label{G(3)ch}

\begin{center}
{\fontsize{5}{6}\selectfont
\begin{tikzpicture}[scale=1.5,every text node part/.style={align=center}]
\node(N0) at (0,0){$\yng(3),\;\emp,\;\emp$};
\node(N11) at (-2,-1.5){$\yng(2,1),\;\emp,\;\emp$};
\node(N12) at (0,-1.5){$\yng(1),\;\yng(2),\;\emp$};
\node(N13) at (2,-1.5){$\emp,\;\emp,\;\yng(3)$};
\node(N21) at (-2,-3){$\yng(1,1),\;\yng(1),\;\emp$};
\node(N22) at (2,-3){$\emp,\;\yng(2),\;\yng(1)$};
\node(N23) at (0,-3){$\emp,\;\emp,\;\yng(2,1)$};
\node(N3) at (0,-4.5){$\emp,\;\yng(1),\;\yng(1,1)$};
\path[->,semithick,RedOrange]
(N11) edge node {}(N0)
(N12) edge node {}(N0)
(N13) edge node {}(N0)
(N21) edge node {}(N11)
(N21) edge node {}(N12)
(N22) edge node {}(N12)
(N22) edge node {}(N13)
(N23) edge node {}(N11)
(N23) edge node {}(N13)
(N3) edge node {}(N21)
(N3) edge node {}(N22)
(N3) edge node {}(N23);
\end{tikzpicture}}
\end{center}

\begin{center}
{\fontsize{5}{6}\selectfont
\begin{tikzpicture}[scale=1.5,every text node part/.style={align=center}]
\node(x0) at (-1,0){$\emp,\;\yng(3),\;\emp$};
\node(x1) at (-2,-1){$\yng(1),\;\yng(2),\;\emp$};
\node(x2) at (-3,-2){$\yng(1,1),\;\yng(1),\;\emp$};
\node(x3) at (-4,-3){$\yng(1,1,1),\;\emp,\;\emp$};
\node(y01) at (3,0){$\emp,\;\emp,\;\yng(3)$};
\node(y11) at (2,-1){$\emp,\;\yng(2),\;\yng(1)$};
\node(y12) at (4,-1){$\emp,\;\emp,\;\yng(2,1)$};
\node(y21) at (1,-2){$\emp,\;\yng(2,1),\;\emp$};
\node(y22) at (3,-2){$\emp,\;\yng(1),\;\yng(1,1)$};
\node(y31) at (2,-3){$\emp,\;\yng(1,1,1),\;\emp$};
\path[->,semithick,RedOrange]
(x3) edge node {}(x2)
(x2) edge node {}(x1)
(x1) edge node {}(x0)
(y31) edge node {}(y21)
(y31) edge node {}(y22)
(y21) edge node {}(y11)
(y22) edge node {}(y11)
(y22) edge node {}(y12)
(y11) edge node {}(y01)
(y12) edge node {}(y01);
\end{tikzpicture}}
\end{center}

\subsection{The (conjectural) BGG resolution of $L(\Triv)$ for $G(3,1,6)$ when $c=1/6$.}\label{G(3,6)triv}
The graph $\bar{\Gamma}_{\Triv}$ encoding the resolution consists of eight cubes glued into a $2 \times 2 \times 2$ cube.
{\fontsize{5}{6}\selectfont
\begin{center}
\begin{tikzpicture}[scale=1.5,every text node part/.style={align=center}]
\node(01) at (0,0){$\yng(6),\;\emp,\;\emp$};
\node(11) at (-2.5,-1.5){$\yng(5,1),\;\emp,\;\emp$};
\node(12) at (0,-1.5){$\yng(3),\;\yng(3),\;\emp$};
\node(13) at (2.5,-1.5){$\yng(1),\;\emp,\;\yng(5)$};
\node(21) at (-5,-3){$\yng(4,1,1),\;\emp,\;\emp$};
\node(22) at (1,-3){$\yng(1,1),\;\emp,\;\yng(4)$};
\node(23) at (-3,-3){$\yng(3,1),\;\yng(2),\;\emp$};
\node(24) at (-1,-3){$\yng(2),\;\yng(3,1),\;\emp$};
\node(25) at (3,-3){$\yng(1),\;\yng(3),\;\yng(2)$};
\node(26) at (5,-3){$\emp,\;\emp,\;\yng(5,1)$};
\node(31) at (-1.7,-4.5){$\yng(1,1,1),\;\emp,\yng(3)$};
\node(32) at (0,-4.5){$\yng(1,1),\;\yng(2),\;\yng(2)$};
\node(33) at (3.3,-4.5){$\emp,\;\emp,\;\yng(4,1,1)$};
\node(34) at (-3.3,-4.5){$\yng(2,1),\;\yng(2,1),\;\emp$};
\node(35) at (5,-4.5){$\emp,\;\yng(3),\;\yng(2,1)$};
\node(36) at (1.7,-4.5){$\yng(1),\;\yng(3,1),\;\yng(1)$};
\node(37) at (-5,-4.5){$\yng(3,1,1),\;\yng(1),\;\emp$};
\node(41) at (1,-6){$\emp,\;\emp,\;\yng(3,1,1,1)$};
\node(42) at (3,-6){$\emp,\;\yng(2),\;\yng(2,1,1)$};
\node(43) at (-3,-6){$\yng(1,1,1),\;\yng(1),\;\yng(2)$};
\node(44) at (-5,-6){$\yng(2,1,1),\;\yng(1,1),\;\emp$};
\node(45) at (5,-6){$\emp,\;\yng(3,1),\;\yng(1,1)$};
\node(46) at (-1,-6){$\yng(1,1),\;\yng(2,1),\;\yng(1)$};
\node(51) at (0,-7.5){$\emp,\;\yng(1),\;\yng(2,1,1,1)$};
\node(52) at (2.5,-7.5){$\emp,\;\yng(2,1),\;\yng(1,1,1)$};
\node(53) at (-2.5,-7.5){$\yng(1,1,1),\;\yng(1,1),\;\yng(1)$};
\node(61) at (0,-9){$\emp,\;\yng(1,1),\;\yng(1,1,1,1)$};
\path[->,semithick,BurntOrange]
(11) edge node {}(01)
(12) edge node {}(01)
(13) edge node {}(01)
(22) edge node {}(11)
(23) edge node {}(11)
(23) edge node {}(12)
(25) edge node {}(12)
(22) edge node {}(13)
(25) edge node {}(13);
\path[->,semithick,Peach]
(24) edge node {}(12)
(36) edge node {}(25)
(34) edge node {}(24)
(36) edge node {}(24);
\path[->,semithick,RedOrange]
(21) edge node {}(11)
(26) edge node {}(13)
(31) edge node {}(21)
(37) edge node {}(21)
(37) edge node {}(23)
(32) edge node {}(25)
(35) edge node {}(25)
(33) edge node {}(26)
(35) edge node {}(26)
(42) edge node {}(32)
(46) edge node {}(32)
(42) edge node {}(35);
\path[->,semithick,Salmon]
(45) edge node {}(36)
(46) edge node {}(36)
(45) edge node {}(35)
(52) edge node {}(45);
\path[->,semithick,Orange]
(32) edge node {}(23)
(34) edge node {}(23)
(46) edge node {}(34)
(43) edge node {}(32)
(43) edge node {}(37)
(44) edge node {}(37)
(44) edge node {}(34)
(53) edge node {}(44);
\path[->,semithick,OrangeRed]
(31) edge node {}(22)
(32) edge node {}(22)
(33) edge node {}(22)
(41) edge node {}(31)
(41) edge node {}(33)
(42) edge node {}(33)
(43) edge node {}(31)
(51) edge node {}(41);
\path[->,semithick,CarnationPink]
(51) edge node {}(43)
(53) edge node {}(43)
(52) edge node {}(46)
(53) edge node {}(46)
(51) edge node {}(42)
(52) edge node {}(42)
(61) edge node {}(51)
(61) edge node {}(52)
(61) edge node {}(53);
\end{tikzpicture}
\end{center}
}

\subsection{The (conjectural) BGG resolutions of $L(\emp,(6),\emp)$ and $L(\emp,(6),\emp)$ for $G(3,1,6)$ at $c=1/6$. The principal block of $G(4,1,4)$ at $c=1/4$.}\label{G(3,6)ch,G(4)}

\begin{center}
{\fontsize{5}{6}\selectfont
\begin{tikzpicture}[scale=1.5,every text node part/.style={align=center}]
\node(11) at (2,0){$\emp,\;\yng(6),\;\emp$};
\node(21) at (.5,-1.5){$\yng(3),\;\yng(3),\;\emp$};
\node(22) at (2,-1.5){$\emp,\;\yng(5,1),\;\emp$};
\node(23) at (3.5,-1.5){$\emp,\;\yng(4),\;\yng(2)$};
\node(31) at (-1,-3){$\yng(3,1),\;\yng(2),\;\emp$};
\node(32) at (.5,-3){$\yng(2),\;\yng(3,1),\;\emp$};
\node(33) at (2,-3){$\yng(1),\;\yng(3),\;\yng(2)$};
\node(34) at (3.5,-3){$\emp,\yng(4,1),\;\yng(1)$};
\node(41) at (-2.5,-4.5){$\yng(3,1,1),\;\yng(1),\;\emp$};
\node(42) at (-1,-4.5){$\yng(2,1),\;\yng(2,1),\;\emp$};
\node(43) at (.5,-4.5){$\yng(1,1),\;\yng(2),\;\yng(2)$};
\node(44) at (2,-4.5){$\yng(1),\;\yng(3,1),\;\yng(1)$};
\node(51) at (-4,-6){$\yng(3,1,1,1),\;\emp,\;\emp$};
\node(52) at (-2.5,-6){$\yng(2,1,1),\;\yng(1,1),\;\emp$};
\node(53) at (-1,-6){$\yng(1,1,1),\;\yng(1),\;\yng(2)$};
\node(54) at (.5,-6){$\yng(1,1),\;\yng(2,1),\;\yng(1)$};
\node(61) at (-4,-7.5){$\yng(2,1,1,1,1),\;\emp,\;\emp$};
\node(62) at (-2.5,-7.5){$\yng(1,1,1,1),\;\emp,\yng(2)$};
\node(63) at (-1,-7.5){$\yng(1,1,1),\;\yng(1,1),\;\yng(1)$};
\node(71) at (-2.5,-9){$\yng(1,1,1,1,1),\;\emp,\;\yng(1)$};

\node(x01) at (0,-10){$\emp,\;\emp,\;\yng(6)$};
\node(x11) at (-1,-11){$\yng(1),\;\emp,\;\yng(5)$};
\node(x12) at (1,-11){$\emp,\;\yng(4),\;\yng(2)$};
\node(x21) at (-2,-12){$\yng(1,1),\;\emp,\yng(4)$};
\node(x22) at (0,-12){$\yng(1),\;\yng(3),\;\yng(2)$};
\node(x23) at (2,-12){$\emp,\;\yng(4,1),\;\yng(1)$};
\node(x31) at (-3,-13){$\yng(1,1,1),\;\emp,\;\yng(3)$};
\node(x32) at (-1,-13){$\yng(1,1),\;\yng(2),\;\yng(2)$};
\node(x33) at (1,-13){$\yng(1),\;\yng(3,1),\;\yng(1)$};
\node(x34) at (3,-13){$\emp,\;\yng(4,1,1),\;\emp$};
\node(x41) at (-2,-14){$\yng(1,1,1),\;\yng(1),\;\yng(2)$};
\node(x42) at (0,-14){$\yng(1,1),\;\yng(2,1),\;\yng(1)$};
\node(x43) at (2,-14){$\yng(1),\;\yng(3,1,1),\;\emp$};
\node(x51) at (-1,-15){$\yng(1,1,1),\;\yng(1,1),\;\yng(1)$};
\node(x52) at (1,-15){$\yng(1,1),\;\yng(2,1,1),\;\emp$};
\node(x61) at (0,-16){$\yng(1,1,1),\;\yng(1,1,1),\;\emp$};

\path[->,semithick,RedOrange]
(22) edge node {}(11)
(23) edge node {}(11)
(34) edge node {}(22)
(34) edge node {}(23);
\path[->,semithick,OrangeRed]
(21) edge node {}(11)
(32) edge node {}(21)
(32) edge node {}(22)
(33) edge node {}(21)
(33) edge node {}(23)
(44) edge node {}(32)
(44) edge node {}(33)
(44) edge node {}(34);
\path[->,semithick,RedOrange]
(31) edge node {}(21)
(42) edge node {}(31)
(42) edge node {}(32)
(43) edge node {}(31)
(43) edge node {}(33)
(54) edge node {}(43)
(54) edge node {}(42)
(54) edge node {}(44);
\path[->,semithick,BurntOrange]
(41) edge node {}(31)
(52) edge node {}(41)
(52) edge node {}(42)
(53) edge node {}(41)
(53) edge node {}(43)
(63) edge node {}(52)
(63) edge node {}(53)
(63) edge node {}(54);
\path[->,semithick,RedOrange]
(51) edge node {}(41)
(61) edge node {}(52)
(61) edge node {}(51)
(62) edge node {}(51)
(62) edge node {}(53)
(71) edge node {}(61)
(71) edge node {}(62)
(71) edge node {}(63);
\path[->,semithick,RedOrange]
(x61) edge node {}(x51)
(x61) edge node {}(x52)
(x51) edge node {}(x41)
(x51) edge node {}(x42)
(x52) edge node {}(x42)
(x52) edge node {}(x43)
(x41) edge node {}(x31)
(x41) edge node {}(x32)
(x42) edge node {}(x32)
(x42) edge node {}(x33)
(x43) edge node {}(x33)
(x43) edge node {}(x34)
(x31) edge node {}(x21)
(x32) edge node {}(x21)
(x32) edge node {}(x22)
(x33) edge node {}(x22)
(x33) edge node {}(x23)
(x34) edge node {}(x23)
(x21) edge node {}(x11)
(x22) edge node {}(x11)
(x22) edge node {}(x12)
(x23) edge node {}(x12)
(x11) edge node {}(x01)
(x12) edge node {}(x01);
\end{tikzpicture}}
\end{center}

{\fontsize{5}{6}\selectfont
\begin{center}
\begin{tikzpicture}[scale=1.5,every text node part/.style={align=center}]
\node(0) at (-1,3){$\yng(4),\;\emp,\;\emp,\;\emp$};
\node(11) at (-7,-1){$\yng(3,1),\;\emp,\;\emp,\;\emp$};
\node(12) at (-2,-1){$\emp,\;\yng(4),\;\emp,\;\emp$};
\node(13) at (1,-1){$\emp,\;\emp,\;\yng(4),\;\emp$};
\node(14) at (4,-1){$\emp,\;\emp,\;\emp,\;\yng(4)$};
\node(21) at (-5,-3){$\yng(2),\;\yng(2),\;\emp,\;\emp$};
\node(22) at (1,-3){$\yng(1),\;\emp,\;\yng(3),\;\emp$};
\node(31) at (-4.5,-4){$\yng(2,1),\;\yng(1),\;\emp,\;\emp$};
\node(32) at (-1,-4){$\emp,\;\yng(3),\;\yng(1),\;\emp$};
\node(33) at (3,-4){$\emp,\;\emp,\;\yng(3),\;\yng(1)$};
\node(41) at (-7,-5){$\yng(2,1,1),\;\emp,\;\emp,\;\emp$};
\node(42) at (-5.2,-5) {$\yng(1,1),\;\emp,\;\yng(2),\;\emp$};
\node(43) at (-3.4,-5) {$\emp,\;\yng(3,1),\;\emp,\;\emp$};
\node(44) at (-1.6,-5) {$\yng(1),\;\yng(2),\;\yng(1),\;\emp$};
\node(45) at (.2,-5) {$\emp,\;\emp,\;\yng(3,1),\;\emp$};
\node(46) at (2,-5) {$\emp,\;\yng(2),\;\emp,\;\yng(2)$};
\node(47) at (4,-5) {$\emp,\;\emp,\;\emp,\;\yng(3,1)$};
\node(51) at (-4,-6){$\yng(1,1),\;\yng(1),\;\yng(1),\;\emp$};
\node(52) at (-1.5,-6){$\yng(1),\;\yng(2,1),\;\emp,\;\emp$};
\node(53) at (0,-6){$\emp,\;\yng(2),\;\yng(1),\;\yng(1)$};
\node(61) at (-5.5,-7){$\yng(1,1,1),\;\emp,\;\yng(1),\;\emp$};
\node(62) at (-1,-7){$\emp,\;\yng(2),\;\yng(1,1),\;\emp$};
\node(63) at (-3,-7){$\yng(1,1),\;\yng(1,1),\;\emp,\;\emp$};
\node(64) at (.5,-7){$\emp,\;\emp,\;\yng(2),\;\yng(1,1)$};
\node(65) at (2,-7){$\emp,\;\yng(2,1),\;\emp,\;\yng(1)$};
\node(66) at (3.5,-7){$\emp,\;\yng(1),\;\emp,\;\yng(2,1)$};
\node(71) at (-.5,-8){$\emp,\;\yng(1),\;\yng(1),\;\yng(1,1)$};
\node(81) at (-7,-9){$\yng(1,1,1,1),\;\emp,\;\emp,\;\emp$};
\node(82) at (-4,-9){$\emp,\;\yng(2,1,1),\;\emp,\;\emp$};
\node(83) at (-1,-9) {$\emp,\;\yng(1,1),\;\emp,\;\yng(1,1)$};
\node(84) at (2,-9){$\emp,\;\emp,\;\yng(2,1,1),\;\emp$};
\node(85) at (4,-9){$\emp,\;\emp,\;\emp,\;\yng(2,1,1)$};
\node(91) at (-2.5,-10){$\emp,\;\yng(1),\;\yng(1,1,1),\;\emp$};
\node(92) at (.5,-10){$\emp,\;\emp,\;\yng(1),\;\yng(1,1,1)$};
\node(10a) at (-4,-13){$\emp,\;\yng(1,1,1,1),\;\emp,\;\emp$};
\node(10b) at (-1,-13){$\emp,\;\emp,\;\yng(1,1,1,1),\;\emp$};
\node(10c) at (2,-13){$\emp,\;\emp,\;\emp,\;\yng(1,1,1,1)$};
\path[->,semithick,RedOrange]
(11) edge node {}(0)
(14) edge node {}(0)
(21) edge node {}(0)
(21) edge node {}(12)
(22) edge node {}(0)
(22) edge node {}(13)
(31) edge node {}(11)
(31) edge node {}(21)
(32) edge node {}(12)
(32) edge node {}(13)
(33) edge node {}(22)
(33) edge node {}(14)
(41) edge node {}(31)
(42) edge node {}(11)
(42) edge node {}(22)
(43) edge node {}(32)
(44) edge node {}(21)
(44) edge node {}(22)
(44) edge node {}(32)
(45) edge node {}(33)
(46) edge node {}(14)
(46) edge node {}(21)
(47) edge node {}(11)
(47) edge node {}(14)
(51) edge node {}(31)
(51) edge node {}(42)
(51) edge node {}(44)
(52) edge node {}(43)
(52) edge node {}(44)
(53) edge node {}(44)
(53) edge node {}(46)
(53) edge node {}(33)
(61) edge node {}(41)
(61) edge node {}(51)
(62) edge node {}(53)
(62) edge node {}(45)
(63) edge node {}(51)
(63) edge node {}(52)
(64) edge node {}(33)
(64) edge node {}(42)
(64) edge node {}(47)
(65) edge node {}(53)
(65) edge node {}(52)
(66) edge node {}(46)
(66) edge node {}(47)
(66) edge node {}(31)
(71) edge node {}(51)
(71) edge node {}(53)
(71) edge node {}(64)
(71) edge node {}(66)
(81) edge node {}(61)
(81) edge node {}(63)
(82) edge node {}(62)
(82) edge node {}(65)
(83) edge node {}(71)
(83) edge node {}(63)
(83) edge node {}(65)
(84) edge node {}(45)
(84) edge node {}(64)
(85) edge node {}(41)
(85) edge node {}(66)
(91) edge node {}(84)
(91) edge node {}(62)
(91) edge node {}(71)
(92) edge node {}(61)
(92) edge node {}(71)
(92) edge node {}(85)
(10a) edge node {}(82)
(10a) edge node {}(83)
(10a) edge node {}(91)
(10b) edge node {}(91)
(10b) edge node {}(92)
(10c) edge node {}(81)
(10c) edge node {}(83)
(10c) edge node {}(92);
\path[->,semithick,OliveGreen]
(33) edge node {}(13)
(41) edge node {}(11)
(43) edge node {}(12)
(45) edge node {}(22)
(45) edge node {}(13)
(46) edge node {}(12)
(52) edge node {}(21)
(53) edge node {}(32)
(61) edge node {}(42)
(62) edge node {}(44)
(62) edge node {}(32)
(63) edge node {}(31)
(65) edge node {}(43)
(65) edge node {}(46)
(81) edge node {}(41)
(82) edge node {}(52)
(82) edge node {}(43)
(83) edge node {}(66)
(84) edge node {}(42)
(85) edge node {}(47)
(91) edge node {}(51)
(92) edge node {}(64)
(10a) edge node {}(63)
(10b) edge node {}(61)
(10b) edge node {}(84)
(10c) edge node {}(85);
\end{tikzpicture}
\end{center}}

\subsection{The (conjectural) BGG resolution of $L(\Triv)$ for $G(4,1,4)$ when $c=1/4$.}\label{G(4)triv}
The graph $\bar{\Gamma}_{\Triv}$ encoding the resolution is a hypercube. 
\begin{center}
{\fontsize{5}{6}\selectfont
\begin{tikzpicture}[scale=1.5,every text node part/.style={align=center}]
\node(N0) at (0,0){$\yng(4),\;\emp,\;\emp,\;\emp$};
\node(N11) at (-3,-1.5){$\yng(3,1),\;\emp,\;\emp,\;\emp$};
\node(N12) at (-1,-1.5){$\yng(2),\;\yng(2),\;\emp,\;\emp$};
\node(N13) at (1,-1.5){$\yng(1),\;\emp,\;\yng(3),\;\emp$};
\node(N14) at (3,-1.5){$\emp,\;\emp,\;\emp,\yng(4)$};
\node(N21) at (-5,-3){$\yng(2,1),\;\yng(1),\;\emp,\;\emp$};
\node(N22) at (-3,-3){$\yng(1,1),\;\emp,\;\yng(2),\;\emp$};
\node(N23) at (-1,-3){$\yng(1),\;\yng(2),\;\yng(1),\;\emp$};
\node(N24) at (1,-3){$\emp,\;\yng(2),\;\emp,\;\yng(2)$};
\node(N25) at (3,-3){$\emp,\;\emp,\yng(3),\;\yng(1)$};
\node(N26) at (5,-3){$\emp,\;\emp,\;\emp,\;\yng(3,1)$};
\node(N31) at (-3,-4.5){$\yng(1,1),\;\yng(1),\;\yng(1),\;\emp$};
\node(N32) at (-1,-4.5){$\emp,\;\yng(2),\;\yng(1),\;\yng(1)$};
\node(N33) at (1,-4.5){$\emp,\;\yng(1),\;\emp,\;\yng(2,1)$};
\node(N34) at (3,-4.5){$\emp,\;\emp,\;\yng(2),\;\yng(1,1)$};
\node(N4) at (0,-6){$\emp,\;\yng(1),\;\yng(1),\;\yng(1,1)$};
\path[->,semithick,Red]
(N11) edge node {}(N0)
(N12) edge node {}(N0)
(N13) edge node {}(N0)
(N21) edge node {}(N11)
(N21) edge node {}(N12)
(N22) edge node {}(N11)
(N22) edge node {}(N13)
(N23) edge node {}(N12)
(N23) edge node {}(N13)
(N31) edge node {}(N21)
(N31) edge node {}(N22)
(N31) edge node {}(N23);
\path[->,semithick,CarnationPink]
(N14) edge node {}(N0)
(N25) edge node {}(N13)
(N24) edge node {}(N12)
(N26) edge node {}(N11)
(N32) edge node {}(N23)
(N34) edge node {}(N22)
(N33) edge node {}(N21)
(N4) edge node {}(N31);
\path[->,semithick,RedOrange]
(N24) edge node {}(N14)
(N25) edge node {}(N14)
(N26) edge node {}(N14)
(N32) edge node {}(N24)
(N32) edge node {}(N25)
(N33) edge node {}(N24)
(N33) edge node {}(N26)
(N34) edge node {}(N25)
(N34) edge node {}(N26)
(N4) edge node {}(N32)
(N4) edge node {}(N33)
(N4) edge node {}(N34);
\end{tikzpicture}}
\end{center}

\subsection{An informative (non)-example.}

Consider $\OO_c(B_2)$ when $c_0=1/2$, $d_0=1$. There are five simples labeled by the five bipartitions of $2$ and they all belong to the same block. By Lemma \ref{diag} every $\Delta(\lambda)$ is $\ttt$-diagonalizable. $L_c(\Triv)$ is finite-dimensional and has a basis of Jack polynomials by \cite{Gri}.

On the left, the graph $\Gamma$; on the right, the $\Ext^1$ quiver of the block, $Q_{\Ext^1}$:
\begin{center}
{\fontsize{5}{6}\selectfont
\begin{tikzpicture}[scale=1,every text node part/.style={align=center}]
\node(N1) at (-4,0){$\yng(2),\;\emp$};
\node(N2) at (-2,-1){$\yng(1,1),\;\emp$};
\node(N3) at (-4,-2){$\yng(1),\;\yng(1)$};
\node(N4) at (-2,-3){$\emp,\;\yng(2)$};
\node(N5) at (-4,-4){$\emp,\;\yng(1,1)$};

\node(Q1) at (2,0){$\yng(2),\;\emp$};
\node(Q2) at (4,-1){$\yng(1,1),\;\emp$};
\node(Q3) at (2,-2){$\yng(1),\;\yng(1)$};
\node(Q4) at (4,-3){$\emp,\;\yng(2)$};
\node(Q5) at (2,-4){$\emp,\;\yng(1,1)$};

\path[->,semithick,RedOrange]
(N5) edge node {}(N4)
(N4) edge node {}(N3)
(N3) edge node {}(N2)
(N2) edge node {}(N1);
\path[->,semithick,OliveGreen]
(N5) edge node {}(N3)
(N3) edge node {}(N1);
\path[<->,semithick,RedOrange]
(Q5) edge node {}(Q4)
(Q4) edge node {}(Q3)
(Q3) edge node {}(Q2)
(Q2) edge node {}(Q1)
(Q4) edge node {}(Q1);
\end{tikzpicture}}
\end{center} 
We see that $Q_{\Ext^1}$ does not coincide with the double of $Q_{\mathrm{prim}}$, which is the subquiver of $\Gamma$ consisting of the orange arrows. Therefore $\OO_c(B_2)$ cannot have tight multiplicities by Corollary \ref{quiver criterion}. However, the Serre subcategory of $\OO_c(B_2)$ spanned by $\{L_c(\tau)\;|\;\tau\leq((1^2),\;\emp)\}$ does have tight multiplicities since the subquivers of $Q_{\Ext^1}$ and $Q_{\mathrm{prim}}$ coincide there, by Corollary \ref{quiver criterion} again. It follows that the only simple in $\OO_c(B_2)$ which does not have a BGG resolution is $L_c(\Triv)$.

\end{document}